\newcommand{\sect}[1]{\section{#1}\setcounter{equation}{0}}
\newcommand{\subsect}[1]{\subsection{#1}}
\font\mbn=msbm10 scaled \magstep1
\font\mbs=msbm7 scaled \magstep1
\font\mbss=msbm5 scaled \magstep1
\def\mbf{\fam\mbff}
\def\Z{{\mbf Z}}
\def\Co{{\mbf C}}
\def\Di{{\mbf D}}
\def\N{{\mbf N}}
\newtheorem{Th}{Theorem}[section]
\newtheorem{Lm}[Th]{Lemma}
\newtheorem{C}[Th]{Corollary}
\newtheorem{D}[Th]{Definition}
\newtheorem{Proposition}[Th]{Proposition}
\newtheorem{R}[Th]{Remark}
\newtheorem{E}[Th]{Example}
\begin{document}

\title[Banach-valued Holomorphic Functions]{Banach-valued Holomorphic Functions on the Maximal Ideal Space of $H^\infty$}

\author{Alexander Brudnyi} 
\address{Department of Mathematics and Statistics\newline
\hspace*{1em} University of Calgary\newline
\hspace*{1em} Calgary, Alberta\newline
\hspace*{1em} T2N 1N4}
\email{albru@math.ucalgary.ca}
\keywords{Bounded holomorphic function, maximal ideal space, $\bar\partial$-equation, cohomology, slice algebra}
\subjclass[2000]{Primary 30D55. Secondary 30H05}

\thanks{Research supported in part by NSERC}

\begin{abstract}
We study Banach-valued holomorphic functions defined on open subsets of the maximal ideal space of the Banach algebra $H^\infty$ of bounded holomorphic functions on the unit disk $\Di\subset\Co$ with pointwise multiplication and supremum norm. In particular, we establish vanishing cohomology for sheaves of germs of such functions and, solving a Banach-valued corona problem for $H^\infty$, prove that the maximal ideal space of the algebra $H_{\rm comp}^\infty (A)$ of holomorphic functions on $\Di$ with relatively compact images in a commutative unital complex Banach algebra $A$ is homeomorphic to the direct product of maximal ideal spaces of $H^\infty$ and $A$. 
\end{abstract}

\date{}



\maketitle

\sect{Formulation of Main Results}
\subsect{}
The paper deals with Banach-valued holomorphic functions defined on open subsets of the maximal ideal space of the Banach algebra $H^\infty$ of bounded holomorphic functions on the unit disk $\Di\subset\Co$ with pointwise multiplication and supremum norm. As in the theory of Stein manifolds, we establish vanishing cohomology for sheaves of germs of such functions, solve the second Cousin problem and prove Runge-type approximation theorems for them. We then apply the developed technique to the study of algebra $H_{\rm comp}^\infty (A)$ of holomorphic functions on $\Di$ with relatively compact images in a commutative unital complex Banach algebra $A$. This class of algebras includes, e.g.,  slice algebras $S(H^\infty; \cdot)$. In particular, solving a Banach-valued corona problem for $H^\infty$, we prove that the maximal ideal space of $H_{\rm comp}^\infty(A)$ is homeomorphic to the direct product of maximal ideal spaces of $H^\infty$ and $A$. Recall that for a commutative unital complex Banach algebra $A$ with dual space $A^*$ the {\em maximal ideal space} $M(A)$ of $A$ is the set of nonzero homomorphisms $A\to\Co$ equipped with the {\em Gelfand topology}, the weak$^*$ topology induced by $A^*$. It is a compact Hausdorff space contained in the unit ball of $A^*$.
Let $C(M(A))$ be the algebra of
continuous complex-valued functions on $M(A)$ with supremum norm. The Gelfand transform $\hat \,: A\to C(M(A))$, defined by $\hat a(\varphi):=\varphi(a)$, is a nonincreasing-norm morphism of algebras that allows to thought of elements of $A$ as continuous functions on $M(A)$. 
If the Gelfand transform is an isometry (as for $H^\infty$), then $A$ is called a {\em uniform algebra}. 

Throughout the paper all Banach algebras are assumed to be complex, commutative and unital.

In the case of $H^\infty$ evaluation at a point of $\Di$ is an element of $M(H^\infty)$, so $\Di$ is naturally embedded into $M(H^\infty)$ as an open subset. The famous Carleson corona theorem \cite{C1} asserts that $\Di$ is dense in $M(H^\infty)$.

Next, given Banach algebras $A\subset C(X)$, $B\subset C(Y)$ ($X$ and $Y$ are compact Hausdorff spaces) their {\em slice algebra} is defined as
\[
S(A;B) := \{f \in C(X \times Y)\, ;\,  f(\cdot; y) \in A\ \text{for all}\ y \in Y\, ;\, f(x; \cdot)\in B\ \text{for all}\ x\in X\}.
\]
The main problem concerning algebra $S(A;B)$ is to determine whether it coincides with $A\otimes B$, the closure in $C(X\times Y )$ of the symmetric tensor product of $A$ and
$B$. For instance, this is true if either $A$ or $B$ have the {\em approximation property}. The latter
is an immediate consequence of the following result of Grothendieck \cite[Sect.~5.1]{G}.

Let $A\subset C(X)$ be a closed subspace, $B$ be a complex Banach space and $A_B \subset C(X;B)$
be the Banach space of all continuous $B$-valued functions $f$ such that $\varphi(f)\in A$ for any
$\varphi\in B^*$. By $A\otimes B$ we denote completion of symmetric tensor product of $A$ and $B$ with
respect to norm
\begin{equation}\label{eq1}
\left\|\sum_{k=1}^m a_k\otimes b_k\right\|:=\sup_{x\in X}\left\|\sum_{k=1}^m a_k(x) b_k\right\|_B\quad \text{with}\quad a_k\in A,\, b_k\in B.
\end{equation}
\begin{Th}[Grothendieck]\label{th1}
The following statements are equivalent:
\begin{itemize}
\item[(1)]
$A$ has the approximation property;
\item[(2)] 
$A\otimes B = A_B$ for every Banach space $B$.
\end{itemize}
\end{Th}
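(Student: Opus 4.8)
The plan is to treat the two implications separately after a common reduction. I would begin by observing that the norm \eqref{eq1} is exactly the injective tensor norm: identifying $u=\sum_k a_k\otimes b_k$ with the $B$-valued function $x\mapsto\sum_k a_k(x)b_k$, its norm \eqref{eq1} equals $\sup_{x\in X}\|\sum_k a_k(x)b_k\|_B$, the norm of $C(X;B)$. Hence $A\otimes B$ is, by definition, the closure in $C(X;B)$ of the set of such functions, and it is immediate that $A\otimes B\subseteq A_B$ (each $x\mapsto a(x)b$ lies in $A_B$; the inclusion passes to finite sums; and since $f\mapsto\varphi\circ f$ is norm-continuous $C(X;B)\to C(X)$ and $A$ is closed, it passes to the closure). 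So in both directions only the inclusion $A_B\subseteq A\otimes B$ is at issue.

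For $(1)\Rightarrow(2)$ I would fix a Banach space $B$, an $f\in A_B$, and $\varepsilon>0$, and attach to $f$ the operator $T_f\colon B^*\to A$, $T_f\varphi:=\varphi\circ f$; it is bounded since $\varphi\circ f\in A$ with $\|\varphi\circ f\|_\infty\le\|\varphi\|\sup_x\|f(x)\|_B$. The point to check is that, because $f$ is continuous on the compact $X$ (hence uniformly continuous), the family $\{\varphi\circ f:\|\varphi\|\le1\}$ is equicontinuous, so $T_f$ is continuous from $(B_{B^*},w^*)$ to $(A,\|\cdot\|)$; in particular $L:=\overline{T_f(B_{B^*})}$ is a compact subset of $A$. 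Then I would invoke (1) to get a finite rank $R=\sum_{k=1}^m\langle\,\cdot\,,\psi_k\rangle a_k$ on $A$ with $\|Ra-a\|<\varepsilon$ for $a\in L$. Each functional $\psi_k\circ T_f$ is $w^*$-continuous on $B_{B^*}$, hence $w^*$-continuous on $B^*$, hence equals $\varphi\mapsto\varphi(b_k)$ for a unique $b_k\in B$; setting $g:=\sum_{k=1}^m a_k\otimes b_k\in A\otimes B$ one computes $\langle\varphi,g(x)\rangle=(R(T_f\varphi))(x)$ and $\langle\varphi,f(x)\rangle=(T_f\varphi)(x)$, so $\|f(x)-g(x)\|_B\le\sup_{\|\varphi\|\le1}\|T_f\varphi-R(T_f\varphi)\|_\infty<\varepsilon$ for every $x$ because $T_f\varphi\in L$. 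Thus $\|f-g\|_{C(X;B)}\le\varepsilon$, so $f$ lies in the closure in $C(X;B)$ of the finite tensors, that is $f\in A\otimes B$.

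For $(2)\Rightarrow(1)$ I would use the classical description of the approximation property: $A$ has it as soon as every compact operator $u\colon Y\to A$, with $Y$ an arbitrary Banach space, is a norm-limit of finite rank operators. Given such $u$, write $\delta_x\in A^*$ for evaluation at $x$ and set $f_u\colon X\to Y^*$, $f_u(x):=u^*\delta_x$. Since $u^*$ is compact it is $w^*$-to-norm continuous on bounded sets, and $x\mapsto\delta_x$ is $w^*$-continuous into $B_{A^*}$, so $f_u$ is norm-continuous; and since compactness of $u$ gives $u^{**}(Y^{**})\subseteq A$, one gets $(\psi\circ f_u)(x)=\langle u^{**}\psi,\delta_x\rangle\in A$ for $\psi\in Y^{**}$, so $f_u\in A_{Y^*}$. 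Applying (2) with $B=Y^*$, for $\varepsilon>0$ I would pick $g=\sum_{k=1}^m b_k\otimes\eta_k\in A\otimes Y^*$ with $\|f_u-g\|_{C(X;Y^*)}<\varepsilon$; then $R:=\sum_{k=1}^m\eta_k(\,\cdot\,)b_k\colon Y\to A$ is finite rank and, from $\langle uy,\delta_x\rangle=\langle y,f_u(x)\rangle$ and $\langle Ry,\delta_x\rangle=\langle y,g(x)\rangle$, one gets $\|u-R\|\le\sup_x\|f_u(x)-g(x)\|_{Y^*}<\varepsilon$. Hence $u$ is approximable and $A$ has the approximation property.

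The hard part is the control of these auxiliary operators rather than the tensor bookkeeping: in $(1)\Rightarrow(2)$ the whole argument hinges on $f\in A_B$ forcing $T_f$ to be $w^*$-to-norm continuous on the unit ball (this is what simultaneously makes $\overline{T_f(B_{B^*})}$ compact, so that (1) applies, and lets one recover honest vectors $b_k\in B$ from the finite rank approximant of $\mathrm{id}_A$ along $L$), and in $(2)\Rightarrow(1)$ the analogous facts are the norm-continuity of $f_u$ and the inclusion $u^{**}(Y^{**})\subseteq A$, both of which rest on the compactness of $u$.
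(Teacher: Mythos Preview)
The paper does not prove this theorem: it is stated as a classical result of Grothendieck and attributed to \cite[Sect.~5.1]{G}, with no proof given. So there is no ``paper's own proof'' to compare against; you have supplied an argument where the author simply quotes the literature.

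That said, your argument is essentially the standard Grothendieck proof and is correct. A couple of small remarks. In $(1)\Rightarrow(2)$, the key step ``$\psi_k\circ T_f$ is $w^*$-continuous on $B_{B^*}$, hence $w^*$-continuous on $B^*$'' tacitly invokes the Banach--Dieudonn\'e/Krein--\v{S}mulian theorem; it would be worth naming it. Also, since $(B_{B^*},w^*)$ is compact and $T_f$ is $w^*$-to-norm continuous there, $T_f(B_{B^*})$ is already norm-compact, so the closure in your definition of $L$ is harmless but unnecessary. In $(2)\Rightarrow(1)$ you appeal to the equivalence ``$A$ has the approximation property $\Longleftrightarrow$ for every Banach space $Y$, finite rank operators are dense in $\mathcal K(Y,A)$''; this is indeed one of Grothendieck's equivalent formulations, but since it is itself nontrivial (the implication back to the original definition requires the compact-set factorization lemma), you should either cite it explicitly or sketch the passage from ``every compact $u:Y\to A$ is approximable'' to ``$\mathrm{id}_A$ is approximable uniformly on compacta''. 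The facts you flag as the ``hard part'' --- $w^*$-to-norm continuity of $T_f$ on the ball via compactness of $f(X)$, and $u^{**}(Y^{**})\subseteq A$ from compactness of $u$ --- are exactly the right hinges, and your justifications for them are sound.
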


Recall that a Banach space $B$ is said to have the approximation property, if,
for every compact set $K\subset B$ and every $\varepsilon > 0$, there exists an operator $T : B\to B$ of
finite rank so that $\|Tx-x\|_B\le\varepsilon$ for every $x\in K$. 

Although it is strongly believed that the class of spaces with the approximation property
includes practically all spaces which appear naturally in analysis, it is not known yet even
for the space $H^\infty$. (The strongest result in this direction due to Bourgain and Reinov \cite[Th.~9]{BG} states that $H^\infty$ has the approximation property ``up to logarithm``.) The first example of a space which fails to have the approximation property was
constructed by Enflo \cite{E}. Since Enflo's work several other examples of such spaces were
constructed, for the references see, e.g., \cite{L}. 

In the paper we show that $H^\infty$ has the approximation property if and only if it has this property in some open neighbourhoods of trivial Gleason parts of $M(H^\infty)$.  
\subsection{}
Let $U\subset M(H^\infty)$ be an open subset and $B$ be a complex Banach space. 
\begin{D}\label{def1}
A continuous function $f\in C(U;B)$ is said to be $B$-valued holomorphic if its restriction to $U\cap\Di$ is $B$-valued holomorphic in the usual sense. 

By $\mathcal O(U;B)$ we denote the vector space of $B$-valued holomorphic functions on $U$. 
\end{D}
If $f\in\mathcal O(U; B)$, then for every open $V\Subset U$ the restriction $f|_{V\cap\Di}$ belongs to the Banach space 
$H_{\rm comp}^\infty(V\cap\Di; B)$ of $B$-valued holomorphic functions $g$ on $V\cap\Di$ with relatively compact images and with norm $\|g\|:=\sup_{z\in V\cap\Di}\|g(z)\|_B$.
Conversely, we have
\begin{Proposition}\label{sheaf}
Let $f\in \mathcal O(U\cap\Di;B)$ satisfy $f|_{V\cap\Di}\in H_{\rm comp}^\infty(V\cap\Di;B)$ for each open $V\Subset U$. Then there exists a unique function $\tilde f\in\mathcal O(U;B)$ such that
$\tilde f|_{U\cap\Di}=f$.
\end{Proposition}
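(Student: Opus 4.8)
The plan is to deduce the statement from its scalar-valued instance by composing with functionals on $B$, and to settle that instance using Carleson's corona theorem together with the local structure of $M(H^\infty)$.

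Uniqueness is immediate: since $\Di$ is dense in $M(H^\infty)$ \cite{C1} and $U$ is open, $U\cap\Di$ is dense in $U$, so a continuous function on $U$ is determined by its restriction to $U\cap\Di$. Applying the same density to each open $V\Subset U$, it suffices to produce, for every such $V$, a continuous extension of $g:=f|_{V\cap\Di}$ to $V$; two such extensions over $V_1,V_2\Subset U$ agree on the dense subset $(V_1\cap V_2)\cap\Di$ of $V_1\cap V_2$, hence on $V_1\cap V_2$, and since $M(H^\infty)$ is compact Hausdorff (so $U=\bigcup_{V\Subset U}V$) they glue to a continuous $\tilde f$ on $U$ with $\tilde f|_{U\cap\Di}=f$. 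Holomorphy of $\tilde f$ on $U\cap\Di$ is part of the hypothesis, so $\tilde f\in\mathcal O(U;B)$ by Definition~\ref{def1}.

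To extend $g\in H_{\rm comp}^\infty(V\cap\Di;B)$ to $V$ I would pass through $B^*$. For $\varphi\in B^*$ the scalar function $\varphi\circ g$ is bounded and holomorphic on $V\cap\Di$; granting that it admits a continuous extension $u_\varphi\in C(V)$ with $\|u_\varphi\|_{C(V)}=\|\varphi\circ g\|_\infty\le\|\varphi\|\,\|g\|_\infty$ (the scalar case, treated below), uniqueness of continuous extensions makes $\varphi\mapsto u_\varphi(\xi)$ linear and bounded by $\|g\|_\infty\|\varphi\|$ for each $\xi\in V$, hence an element $T_\xi\in B^{**}$ with $\|T_\xi\|\le\|g\|_\infty$. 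For any net $z_\alpha\to\xi$ in $V\cap\Di$ one has $\varphi(g(z_\alpha))=(\varphi\circ g)(z_\alpha)\to u_\varphi(\xi)=T_\xi(\varphi)$ for all $\varphi$, i.e. $g(z_\alpha)\to T_\xi$ in the weak$^*$ topology of $B^{**}$. Here the hypothesis that $g$ has \emph{relatively compact} image is used decisively: the norm-closure $K$ of $g(V\cap\Di)$ is norm-compact, hence weakly compact in $B$, hence weak$^*$ closed in $B^{**}$, so that $T_\xi\in K\subset B$. Setting $\tilde g(\xi):=T_\xi$ defines $\tilde g\colon V\to B$ with $\tilde g|_{V\cap\Di}=g$, $\varphi\circ\tilde g=u_\varphi\in C(V)$ for every $\varphi$, and $\tilde g(V)\subset K$; since the weak and norm topologies agree on the norm-compact set $K$, it follows that $\tilde g\in C(V;B)$, as needed.

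What remains, and is the main obstacle, is the scalar case: a bounded holomorphic function $u$ on $V\cap\Di$ extends continuously to $V$. It is enough to show that $\lim_{z\to\xi,\,z\in V\cap\Di}u(z)$ exists for each $\xi\in V$, continuity of the resulting $\tilde u$ then following from the uniform bound $|\tilde u|\le\|u\|_\infty$ on a basic neighbourhood of $\xi$. Choose $f_1,\dots,f_n\in H^\infty$ with $\hat f_j(\xi)=0$, $\|f_j\|_\infty\le 1$, so that $W:=\{\eta\in M(H^\infty):|\hat f_j(\eta)|<r,\ j\le n\}\subset V$ is a basic neighbourhood of $\xi$; then $W\cap\Di=\{z\in\Di:|f_j(z)|<r\ \forall j\}\subset V\cap\Di$, and by the corona theorem \cite{C1} the common zero $\xi$ of the $f_j$ forces $\inf_{\Di}\max_j|f_j|=0$, so $\Di$ accumulates at $\xi$ through $W\cap\Di$. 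The essential point is that $u$ has a \emph{unique} cluster value along $W\cap\Di$ at $\xi$; this I would obtain by a localized corona argument — passing to the uniform algebra generated by $u$ together with the restrictions $\hat h|_{V\cap\Di}$ ($h\in H^\infty$), whose maximal ideal space maps continuously onto the closure of $V\cap\Di$ in $M(H^\infty)$ (which contains $V$), and using the corona theorem to see that the fibre over $\xi$ is a single point — or, alternatively, by invoking the corresponding known structural result for $M(H^\infty)$. Granting this, $\tilde u(\xi)$ is well defined, and the two reductions above then complete the proof.
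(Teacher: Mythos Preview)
Your reduction from the Banach-valued statement to the scalar case is correct and matches the paper's argument almost exactly: compose with functionals $\varphi\in B^*$, extend each $\varphi\circ f$, observe that the resulting map $V\to B^{**}$ takes values in the weak$^*$ closure of the relatively compact set $\overline{f(V\cap\Di)}\subset B$, and use that on norm-compact subsets of $B$ the weak and norm topologies coincide. (One minor slip: the continuity of the scalar extension $\tilde u$ does not follow from the uniform bound you mention; it follows from the standard fact that a continuous map on a dense subset with limits existing at every boundary point extends continuously to a regular target. This is harmless.)

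The genuine gap is the scalar case. Your ``localized corona argument'' is not a proof but a hope: you want to show that the algebra generated by $u$ and the restrictions $\hat h|_{V\cap\Di}$ has a single character over $\xi$, but the corona theorem for $H^\infty$ gives you nothing about this algebra---$u$ is only defined on $V\cap\Di$, not on $\Di$, so there is no corona data to invoke. Showing that a bounded holomorphic function on $W\cap\Di$ (with $W\Subset V$ open in $M(H^\infty)$) extends continuously to $W$ is a substantial structural fact about $M(H^\infty)$; it is precisely Su\'arez's Theorem~3.2 in \cite{S1}, and the paper simply cites it. That theorem uses the fine structure of Gleason parts and interpolating sequences and is not a consequence of the classical corona theorem alone. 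Your alternative phrase ``invoking the corresponding known structural result for $M(H^\infty)$'' is exactly right---that result is \cite[Th.~3.2]{S1}---but you should name it rather than gesture at a direct argument that, as written, does not close.
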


By $\mathcal O_{M(H^\infty)}^B$ we denote the sheaf of germs of $B$-valued holomorphic functions on $M(H^\infty)$. 
\begin{Th}\label{main}
$H^k(M(H^\infty);\mathcal O_{M(H^\infty)}^B)=0$.
\end{Th}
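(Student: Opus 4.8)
The plan is to follow the Cartan-type strategy familiar from the theory of Stein spaces, reducing everything to the solvability of a $\bar\partial$-type equation with values in $B$ on appropriate pieces of $M(H^\infty)$. The key structural facts about $M(H^\infty)$ that I would exploit are: (a) by the Carleson corona theorem $\Di$ is dense in $M(H^\infty)$; (b) $M(H^\infty)$ admits a nice exhaustion/covering by open sets that are, in a suitable sense, ``holomorphically tractable'' — e.g., sets of the form where a finite Blaschke-type collection of $H^\infty$ functions gives local coordinates, together with the fibers over trivial Gleason parts which are points. More concretely, I would use the known description of $M(H^\infty)$ via Hoffman's theory: the union of nontrivial Gleason parts is an open subset which is an analytic disk fibration, while trivial parts form the complementary set; over each there is a local model on which holomorphic functions with relatively compact image behave like honest holomorphic functions on a disk (or a point). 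The first step, then, is to fix a ``good'' open cover $\mathfrak{U} = \{U_i\}$ of $M(H^\infty)$ such that on each $U_i$ Theorem \ref{main} holds in the strong form $H^k(U_i;\mathcal O^B_{M(H^\infty)})=0$ for $k\ge 1$ and such that the pairwise and higher intersections are again of this form — i.e. a Leray cover for the sheaf $\mathcal O^B_{M(H^\infty)}$.

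The second step is the local vanishing: on a basic open set $U_i$ — which after using a finite tuple of $H^\infty$ functions looks like (a thin neighbourhood of) a disk, with $\Di$ dense — I would show $H^k(U_i;\mathcal O^B)=0$ by solving the inhomogeneous Cauchy–Riemann equation $\bar\partial u = g$ with $g$ a $B$-valued $(0,1)$-form whose coefficients lie in $H^\infty_{\rm comp}(\cdot\,;B)$-type spaces, and controlling the solution so that it again has relatively compact image; this is where Proposition \ref{sheaf} enters, letting one pass between honest holomorphic functions on $U_i\cap\Di$ and sections of the sheaf on $U_i$. The scalar $\bar\partial$-estimates on the disk (Cartan's integral operator / the standard solution with sup-norm control as in the corona-theorem circle of ideas) are applied coefficientwise, or rather to $\varphi\circ g$ for each $\varphi\in B^*$, and then one checks the resulting family of solutions assembles into a $B$-valued holomorphic function by Hahn–Banach plus the relative compactness hypothesis. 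A Dolbeault-type argument then converts vanishing of $\bar\partial$-cohomology into vanishing of the sheaf cohomology $H^k(U_i;\mathcal O^B)$.

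The third step is globalization: with a Leray cover in hand, $H^k(M(H^\infty);\mathcal O^B) \cong \check H^k(\mathfrak U;\mathcal O^B)$, and I would kill Čech cocycles by the usual descending induction — a partition-of-unity-type splitting together with the local solvability just established — this is precisely a Cousin-I / Cartan's Theorem B argument adapted to the present non-metrizable (but compact Hausdorff) base. Here one needs a continuous (or at least measurable, then regularized) partition of unity subordinate to $\mathfrak U$ on $M(H^\infty)$; such exist since $M(H^\infty)$ is compact Hausdorff, hence normal.

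I expect the main obstacle to be the construction of the good cover and, in particular, establishing the \emph{local} vanishing on neighbourhoods meeting the trivial Gleason parts of $M(H^\infty)$: near nontrivial parts Hoffman's analytic-disk structure makes the $\bar\partial$-machinery nearly transparent, but over trivial parts the ``analytic'' structure degenerates and one must argue directly with $H^\infty$-functions, using the corona theorem to transfer estimates from $\Di$ and using the $B$-valued corona result (and the relative-compactness hypothesis built into $\mathcal O(U;B)$) to keep the solutions in the right class. Getting uniform control of the $\bar\partial$-solutions across the whole cover — so that the Čech coboundaries one produces are genuinely sections of $\mathcal O^B_{M(H^\infty)}$ and not merely of some larger space — will be the technically delicate part, and is presumably where the bulk of the paper's preliminary machinery (the Banach-valued corona theorem, Proposition \ref{sheaf}, Runge-type approximation) is consumed.
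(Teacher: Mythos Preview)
Your outline misses the paper's central structural trick and, as written, would get stuck exactly where you predict. The paper does \emph{not} build a Leray cover or prove any local vanishing $H^k(U_i;\mathcal O^B)=0$; instead it exploits a combinatorial feature of $M(H^\infty)$ to push all the analysis into $M_a$. The point is that $M_s$ is compact and totally disconnected (Su\'arez), so any open cover of $M(H^\infty)$ admits a finite refinement $(W_j)_{1\le j\le k}$ in which the first $r$ sets cover $M_s$ and are \emph{pairwise disjoint in closure}, while $W_{r+1},\dots,W_k$ are relatively compact in $M_a$ (this is Lemma~\ref{le4.2}). Consequently every nonempty intersection $W_i\cap W_j$, $i\ne j$, lies in $M_a$. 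After resolving a $1$-cocycle by a partition of unity, the resulting $(0,1)$-form $\omega=\bar\partial h_i$ therefore has compact support in $M_a$, and a single application of the global $\bar\partial$-operator of Theorem~\ref{dbar} finishes the case $k=1$. No local vanishing near $M_s$ is ever needed; the totally-disconnected structure of $M_s$ makes the neighbourhoods of trivial parts cohomologically invisible at the \v{C}ech level. Your plan to ``argue directly with $H^\infty$-functions'' over trivial parts using corona-type input is both unnecessary and, as you suspected, the place where your approach would fail.

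Two further points. First, a merely continuous partition of unity on $M(H^\infty)$ is not enough: you must apply $\bar\partial$ to the functions $h_i=\sum_k\varphi_k c_{ik}$, so the $\varphi_k$ have to be $C^\infty$ in the sense of the calculus on $E(S,\beta G)$ developed in Section~3 (Proposition~\ref{partition}), and the whole construction lives on that fiber-bundle model, not on $M(H^\infty)$ directly. Second, solving $\bar\partial$ ``coefficientwise via $\varphi\in B^*$'' and reassembling is not how the paper proceeds and would only land you in $B^{**}$; the actual argument builds an explicit bounded linear operator $L_K:\mathcal E_K^{0,1}(X)\to C(M(H^\infty);X)$ (Theorem~\ref{dbar}) via interpolating Blaschke products and an integral operator, with the relative-compactness of images handled at the level of the construction rather than by duality. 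For $k=2$ the same refinement trick is used together with a local $\bar\partial$-lemma (Lemma~\ref{rest}), and for $k\ge 3$ one simply invokes $\dim M(H^\infty)=2$.
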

\noindent Here $H^k(X;\mathcal J)$ stands for the $k\,$th {\em \v{C}ech cohomology group} of a sheaf of abelian groups $\mathcal J$ defined on a Hausdorff topological space $X$.

The proof of Theorem \ref{main} is based on a new method for solving of certain Banach-valued $\bar\partial$-equations on $\Di$, see Theorem \ref{dbar} (this theorem is the main tool in most of our proofs).

A function $h$ on an open subset $U\subset M(H^\infty)$ is said to be {\em meromorphic} if $h=\frac fg$, where $f,g\in\mathcal O(U)\, (:=\mathcal O(U;\Co))$ and $g$ is not identically zero. The set of meromorphic functions on $U$ is denoted by $\mathcal M(U)$. For $U=M(H^\infty)$ the class $\mathcal M(U)$ consists of functions $h=\frac fg$ with $f, g\in H^\infty$. (Here and below $H^\infty$ is defined on $M(H^\infty)$ by means of the Gelfand transform.) 

A (Cartier) {\em divisor} on $M(H^\infty)$ consists of pairs $(U_i, h_i)$, where $(U_i)$ is an open cover of $M(H^\infty)$ and $h_i\in\mathcal M(U_i)$, such that for all $U_i\cap U_j\ne\emptyset$ functions
$\frac{h_i}{h_j}\in\mathcal O(U_i\cap U_j)$ and are nowhere zero. As a corollary of Theorem \ref{main} we obtain the solution of the second Cousin problem on $M(H^\infty)$.
\begin{Th}\label{cousin}
For any divisor $D=\{(U_i,h_i)\}_{i\in I}$ on $M(H^\infty)$ there exist a meromorphic function $h_D\in\mathcal M(M(H^\infty))$ and a family of nowhere vanishing functions $c_i\in \mathcal O(U_i)$, $i\in I$, such that
\[
h_D|_{U_i}=h_i\cdot c_i\quad\text{for all}\quad i\in I.
\]
\end{Th}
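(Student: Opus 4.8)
The plan is to reformulate the Cousin~II problem sheaf--cohomologically and reduce it to Theorem~\ref{main} together with a vanishing statement for the integral cohomology of $M(H^\infty)$.

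Let $\mathcal O^*_{M(H^\infty)}$ denote the sheaf of germs of nowhere vanishing holomorphic functions on $M(H^\infty)$, regarded as a sheaf of abelian groups under multiplication. The hypotheses on the divisor $D=\{(U_i,h_i)\}$ say precisely that the functions $g_{ij}:=h_i/h_j\in\mathcal O^*(U_i\cap U_j)$ are well defined, and they clearly satisfy $g_{ij}g_{jk}=g_{ik}$ on triple intersections; hence $(g_{ij})$ determines a class $[D]\in H^1(M(H^\infty);\mathcal O^*_{M(H^\infty)})$ (independent of the cover after passing to refinements). If this class is trivial, then $g_{ij}=c_i^{-1}c_j$ on $U_i\cap U_j$ for some $c_i\in\mathcal O^*(U_i)$, whence $h_ic_i=h_jc_j$ on $U_i\cap U_j$; the local functions $h_ic_i$ then glue to a single function $h_D$, which is meromorphic on all of $M(H^\infty)$ by construction, hence of the form $f/g$ with $f,g\in H^\infty$, and satisfies $h_D|_{U_i}=h_i c_i$ with the $c_i$ nowhere vanishing. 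Thus it suffices to prove $H^1(M(H^\infty);\mathcal O^*_{M(H^\infty)})=0$.

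For this I would use the exponential sequence of sheaves
\[
0\longrightarrow \Z \longrightarrow \mathcal O_{M(H^\infty)}\stackrel{e}{\longrightarrow}\mathcal O^*_{M(H^\infty)}\longrightarrow 0,\qquad e(u):=\exp(2\pi i u).
\]
Exactness on the right amounts to the local existence of a holomorphic logarithm for every germ of a nowhere vanishing holomorphic function; this holds because $M(H^\infty)$ admits a basis of open neighbourhoods $V$ with $H^1(V;\Z)=0$ (for such $V$ the long exact sequence already forces $e:\mathcal O(V)\to\mathcal O^*(V)$ to be onto), a structural property supplied by the same circle of ideas — neighbourhoods whose traces on $\Di$ are suitably simple, together with Proposition~\ref{sheaf} to pass between $V$ and $V\cap\Di$ — that underlies the proof of Theorem~\ref{main}. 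The associated long exact cohomology sequence contains
\[
H^1(M(H^\infty);\mathcal O_{M(H^\infty)})\longrightarrow H^1(M(H^\infty);\mathcal O^*_{M(H^\infty)})\longrightarrow H^2(M(H^\infty);\Z),
\]
and by Theorem~\ref{main} with $B=\Co$ and $k=1$ the left-hand group vanishes. Hence $H^1(M(H^\infty);\mathcal O^*_{M(H^\infty)})$ embeds into $H^2(M(H^\infty);\Z)$; in fact, using Theorem~\ref{main} also for $k=2$, the two groups are isomorphic. The proof is therefore completed by invoking the fact that $H^2(M(H^\infty);\Z)=0$.

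The main obstacle is precisely this last ingredient. The analytic cohomology is handled once and for all by Theorem~\ref{main}, but the integral cohomology of $M(H^\infty)$ encodes delicate topological features of the corona (Gleason parts and the fibre structure over $\partial\Di$), and it cannot be circumvented, since here $H^1(M(H^\infty);\mathcal O^*_{M(H^\infty)})\cong H^2(M(H^\infty);\Z)$. A secondary point requiring care is the exactness of the exponential sequence itself, i.e.\ exhibiting a neighbourhood basis of $M(H^\infty)$ on which nowhere vanishing holomorphic functions possess holomorphic logarithms.
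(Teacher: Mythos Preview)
Your argument is exactly the paper's: the divisor yields a cocycle in $H^1(M(H^\infty);\mathcal O^*_{M(H^\infty)})$, and the exponential short exact sequence together with Theorem~\ref{main} reduces the vanishing of this group to $H^2(M(H^\infty);\Z)=0$. The two points you flag as obstacles are not: the vanishing $H^2(M(H^\infty);\Z)=0$ is a theorem of Su\'arez \cite[Cor.~3.9]{S1} (stated in the paper with $C_\Z$, the sheaf of germs of continuous $\Z$-valued functions, which coincides with the constant sheaf $\Z$), and exactness of the exponential sequence is immediate---any germ of a nowhere vanishing holomorphic function at $x$ has, on a sufficiently small neighbourhood $V$, image contained in a disk in $\Co^*$ on which a single-valued branch of $\log$ exists, so $\tfrac{1}{2\pi i}\log f\in\mathcal O(V)$ furnishes the local preimage.
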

\begin{R}{\rm The statement is equivalent to the fact that any holomorphic line bundle on $M(H^\infty)$ (i.e., a line bundle determined on an open cover of $M(H^\infty)$ by a holomorphic cocycle) is holomorphically trivial. The general question of whether any finite rank holomorphic bundle on $M(H^\infty)$ is holomorphically trivial is open.}
\end{R}
Our next result is a Runge-type approximation theorem for Banach-valued holomorphic functions defined on subsets of $M(H^\infty)$.

A compact subset $K\subset M(H^\infty)$ is called {\em holomorphically convex} if for any $x\notin K$ there is $f\in H^\infty$ such that
$\max_{K}|f|<|f(x)|$.
\begin{Th}\label{runge}
Any $B$-valued holomorphic function defined on a neigbourhood of a holomorphically compact set $K\subset M(H^\infty)$ can be uniformly approximated on $K$ by functions from $\mathcal O(M(H^\infty);B)$. 
\end{Th}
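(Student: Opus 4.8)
The plan is to run the classical Oka--Weil argument, with Theorem \ref{main} (hence, through it, Theorem \ref{dbar}) playing the role of Cartan's Theorem~B, and Proposition \ref{sheaf} certifying that the functions produced along the way are genuine members of $\mathcal O(M(H^\infty);B)$. \textbf{Step 1 (reduction to a holomorphic polyhedron).} Let $f\in\mathcal O(U;B)$ with $K\subset U$. Since $M(H^\infty)\setminus U$ is compact and $K$ is holomorphically convex, for every $x\in M(H^\infty)\setminus U$ there is $h_x\in H^\infty$ with $\max_K|\hat h_x|<1<|\hat h_x(x)|$; covering $M(H^\infty)\setminus U$ by finitely many of the open sets $\{\,|\hat h_x|>1\,\}$ yields $h_1,\dots,h_m\in H^\infty$ such that, writing $\hat h:=(\hat h_1,\dots,\hat h_m)$ and $P_r:=\{\,y\in M(H^\infty):|\hat h_j(y)|\le r,\ j=1,\dots,m\,\}$, one has $K\subset\{\,|\hat h_j|<1\ \forall j\,\}\subset P_1\subset U$. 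As $f$ is holomorphic on $U\supset P_1$, the theorem reduces to: \emph{every $B$-valued holomorphic function $g$ on a neighbourhood of $P_1$ is uniformly approximable on $P_1$ by elements of $\mathcal O(M(H^\infty);B)$.}

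\textbf{Step 2 (the graph trick).} Put $Z:=M(H^\infty)\times\Co^m$, let $\mathcal O_Z^B$ denote the sheaf of germs of $B$-valued holomorphic functions on $Z$, and let $\Gamma:=\{\,(x,\hat h(x)):x\in M(H^\infty)\,\}\subset Z$ be the copy of $M(H^\infty)$ embedded as the common zero set of $w_1-\hat h_1(x),\dots,w_m-\hat h_m(x)$. Choose $r>1$ so close to $1$ that $g$ is holomorphic on an open neighbourhood $V$ of $P_r$; then $g\circ\mathrm{pr}$ (with $\mathrm{pr}\colon Z\to M(H^\infty)$ the projection) is a section of $\mathcal O_Z^B$ over $V\times\Co^m$, which is a neighbourhood of the compact set $L:=\Gamma\cap\bigl(M(H^\infty)\times\Pi_r\bigr)\cong P_r$, where $\Pi_r:=\{\,w\in\Co^m:|w_j|\le r\ \forall j\,\}$. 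The crucial move is to \emph{extend} this germ along $L$ to a $B$-valued holomorphic function $G$ on a full neighbourhood $W$ of $M(H^\infty)\times\Pi_r$ in $Z$. Granting this, expand $G$ in its $w$-Taylor series $G(x,w)=\sum_\alpha c_\alpha(x)w^\alpha$, the coefficients $c_\alpha$ being the iterated Cauchy integrals of $G$ over the tori $\{|w_j|=\rho\}$ ($1<\rho<r$); by Proposition \ref{sheaf} each $c_\alpha\in\mathcal O(M(H^\infty);B)$, the series converges uniformly on $M(H^\infty)\times\Pi_r$, and the truncated sums, restricted to $\Gamma$ (i.e.\ with $w_j$ replaced by $\hat h_j(x)$), are the functions $g_N(x):=\sum_{|\alpha|\le N}c_\alpha(x)\,\hat h(x)^\alpha\in\mathcal O(M(H^\infty);B)$. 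For $x\in P_1$ one has $|\hat h_j(x)|\le1<r$, so $(x,\hat h(x))\in M(H^\infty)\times\Pi_r$ and $G(x,\hat h(x))=g(x)$ because $G=g\circ\mathrm{pr}$ near $L\ni(x,\hat h(x))$; hence $g_N\to g$ uniformly on $P_1$, completing Step 2.

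\textbf{The main obstacle} is the extension in Step 2. Through the long exact cohomology sequence of $0\to\mathcal I_\Gamma\otimes\mathcal O_Z^B\to\mathcal O_Z^B\to\mathcal O_Z^B/(\mathcal I_\Gamma\otimes\mathcal O_Z^B)\to0$ together with the Koszul resolution of the ideal sheaf $\mathcal I_\Gamma$ (by the sequence $w_j-\hat h_j$), such an extension exists as soon as $H^q(W;\mathcal O_Z^B)=0$ for all $q\ge1$ and suitable neighbourhoods $W$ of $M(H^\infty)\times\Pi_r$ in $Z=M(H^\infty)\times\Co^m$ --- i.e.\ one needs the analogue of Theorem \ref{main} on (open subsets of) these product spaces, the single point at which the argument must venture outside $M(H^\infty)$ itself. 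I would establish this either by rerunning the $\bar\partial$-machinery behind Theorem \ref{dbar} over $\Di\times\Di^m$, where the required $\bar\partial$-equations are a parametrized form of those solved on $\Di$, or by identifying the sheaf $\mathcal O^B$ on $M(H^\infty)\times\Di^m$ (over a relatively compact polydisc) with $\mathcal O^{B'}$ on $M(H^\infty)$ for the Banach space $B'=H_{\rm comp}^\infty(\Di^m;B)$ and quoting Theorem \ref{main} directly. The remaining ingredients --- the polyhedron reduction, the Taylor truncation, and membership of the limit in $\mathcal O(M(H^\infty);B)$ --- are then routine. (A one-variable-at-a-time $\bar\partial$-``bumping'' over shrinking neighbourhoods of the polyhedra $\{\,|\hat h_j|\le1,\ j\le k\,\}$ is equally possible but runs into the same analytic difficulty.)
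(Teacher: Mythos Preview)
Your reduction to a polyhedron (Step~1) matches the paper's Lemma~\ref{polyhedr}, but from there the two proofs diverge sharply. The paper does \emph{not} use the Oka--Weil graph embedding or any cohomology on product spaces. Instead it works directly on $M(H^\infty)$: using the total disconnectedness of $M_s$ (Lemmas~\ref{twosets} and~\ref{cutoff}) it builds a $C^\infty$ cutoff $\varphi$ equal to $1$ near $K$ with ${\rm supp}\,\bar\partial\varphi$ a \emph{compact subset of $M_a$} lying in the region $\{\max_j|\hat f_{y_j}|>r>1\}$. One then splits $g\bar\partial\varphi=\sum_j\omega_j$ with ${\rm supp}\,\omega_j\subset\{|\hat f_{y_j}|>r\}$, solves $\bar\partial h_{jn}=\omega_j/\hat f_{y_j}^{\,n}$ via Theorem~\ref{dbar} (the norms $\|\omega_j/\hat f_{y_j}^{\,n}\|_{S_j}\to 0$ since $|\hat f_{y_j}|>r>1$ on the support), and sets $g_n:=g\varphi-\sum_j \hat f_{y_j}^{\,n}h_{jn}\in\mathcal O(M(H^\infty);B)$; on $K$ one has $g\varphi=g$, $|\hat f_{y_j}|\le 1$, and $\|h_{jn}\|\to0$, so $g_n\to g$ uniformly. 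This is a H\"ormander-style ``weighted $\bar\partial$'' argument carried out entirely on $M(H^\infty)$, and Theorem~\ref{main} is never invoked.

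Your Oka--Weil route is conceptually sound and would give a second proof, but the step you flag as the main obstacle is genuinely extra work not present in the paper: you need $H^q\bigl(M(H^\infty)\times D;\mathcal O_Z^B\bigr)=0$ (or at least that restriction to a smaller polydisc kills cohomology) for $1\le q\le m$, and neither Theorem~\ref{main} nor Theorem~\ref{dbar} is stated for such products. Of your two suggested fixes, the second is the cleaner one and does work after one shrinking: for $D'\Subset D$, a cocycle on a product cover $(V_i\times D)$ restricts to a cocycle with values in the Banach space $B'=H_{\rm comp}^\infty(D';B)$ on the cover $(V_i)$ of $M(H^\infty)$, to which Theorem~\ref{main} applies; this shows the restriction map on cohomology is zero, which suffices for the Koszul/lifting argument over a slightly smaller $W$. (Your first fix, rerunning Theorem~\ref{dbar} parametrically in $w$, also works because the operator $L_K$ there is continuous linear, hence preserves holomorphic dependence on $w$; but verifying that $w\mapsto\omega(\cdot,w)$ is holomorphic into $\bigl(\mathcal E_K^{0,1}(B),\|\cdot\|_K\bigr)$ is fiddly.) One small imprecision in Step~2: you should say $G\equiv g\circ{\rm pr}\pmod{\mathcal I_\Gamma}$ on $W$, not $G=g\circ{\rm pr}$ near $L$; the conclusion $G(x,\hat h(x))=g(x)$ for $x\in P_1$ is correct because the ideal vanishes on $\Gamma$. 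In summary: your argument is a legitimate alternative, more ``Cartan--Serre'' in spirit, but it buys generality at the cost of an auxiliary product-space cohomology lemma, whereas the paper's direct $\bar\partial$ argument exploits the $M_a/M_s$ decomposition to stay on $M(H^\infty)$ throughout.
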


In fact it suffices to prove the result for $K$ a {\em polyhedron}, i.e., a set of the form $\Pi(F_\ell):=\{x\in M(H^\infty)\, ;\, \max_{1\le j\le\ell}|f_j(x)|\le 1,\, F_\ell:=(f_1,\dots, f_\ell)\in (H^\infty)^{\ell}\,\}$ (see Lemma \ref{polyhedr}). Let $\dot \Pi(F_\ell)$ denote intersection of the interior of $\Pi(F_\ell)$ with $\Di$.
Then in view of Proposition \ref{sheaf}, we can reformulate Theorem \ref{runge} as follows.

{\em Any function in $H_{\rm comp}^\infty(\dot\Pi(F_\ell);B)$ 
can be uniformly approximated on each $\Pi(r\cdot F_\ell)$, $r>1$, by functions from $H_{\rm comp}^\infty(B):=H_{\rm comp}^\infty(\Di;B)$.}
\begin{R}\label{merg}
{\rm With regard to Theorem \ref{runge} one may ask about an analog of the Mergelyan theorem. For a polyhedron the question can be stated as follows.}

Is it true that any function in $C(\Pi(F_\ell);B)$ holomorphic on $\dot\Pi(F_\ell)$ can be uniformly approximated on $\Pi(F_\ell)\cap\Di$ by functions from $H_{\rm comp}^\infty(B)$?
\end{R}
\subsection{}
In this part we study Banach algebras $H_{\rm comp}^\infty(A)$ of holomorphic functions on $\Di$ with relatively compact images in a  commutative complex unital Banach algebra $A$. This class of algebras includes, e.g., slice algebras $S(H^\infty;\cdot)$. Note that the restriction map to $\Di$ induces an isomorphism between $\mathcal O(M(H^\infty) ; A)$ and $H_{\rm comp}^\infty(A)$, cf. Proposition \ref{sheaf}. 
\begin{Th}\label{th2}
Let $f_1,\dots, f_m,\, f\in \mathcal O(M(H^\infty);A)$. Then $f$ belongs to the ideal $I\subset  \mathcal O(M(H^\infty);A)$ generated by $f_1,\dots, f_m$ if and only if there exists a finite open cover
$(U_k)_{1\le k\le\ell}$ of $M(H^\infty)$ such that for every $1\le k\le\ell$ the function $f|_{U_k}$ belongs to the ideal $I_k\subset \mathcal O(U_k;A)$ generated by functions $f_1|_{U_k},\dots , f_m|_{U_k}$.
\end{Th}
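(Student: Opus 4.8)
The ``only if'' part is immediate: an identity $f=\sum_{i=1}^m g_i f_i$ with $g_i\in\mathcal O(M(H^\infty);A)$ restricts to the members of any finite open cover to exhibit $f$ in the corresponding local ideals. So assume the local condition; the task is to produce global multipliers. Put $\mathbf f=(f_1,\dots,f_m)$ and consider the morphism of sheaves of $\mathcal O_{M(H^\infty)}$-modules
\[
\alpha\colon\bigl(\mathcal O_{M(H^\infty)}^A\bigr)^m\longrightarrow\mathcal O_{M(H^\infty)}^A,\qquad (g_1,\dots,g_m)\longmapsto\sum_{i=1}^m g_i f_i,
\]
with kernel the \emph{relation sheaf} $\mathcal R$ and image the ideal sheaf $\mathcal I$ whose stalk at $x$ is the ideal generated in $\mathcal O_{M(H^\infty),x}^A$ by the germs of $f_1,\dots,f_m$. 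By the definition of the image sheaf, the local hypothesis says exactly that $f$ is a global section of $\mathcal I$. Since $\bigl(\mathcal O_{M(H^\infty)}^A\bigr)^m\cong\mathcal O_{M(H^\infty)}^{A^m}$, Theorem \ref{main} gives $H^1\bigl(M(H^\infty);(\mathcal O_{M(H^\infty)}^A)^m\bigr)=0$, and the long exact cohomology sequence of
\[
0\longrightarrow\mathcal R\longrightarrow\bigl(\mathcal O_{M(H^\infty)}^A\bigr)^m\stackrel{\alpha}{\longrightarrow}\mathcal I\longrightarrow0
\]
yields $H^0(\mathcal I)/\alpha\bigl(H^0((\mathcal O_{M(H^\infty)}^A)^m)\bigr)\cong H^1(M(H^\infty);\mathcal R)$. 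Hence \emph{every} global section of $\mathcal I$ belongs to $I$ — which is the assertion of the theorem — provided
\[
H^1(M(H^\infty);\mathcal R)=0,
\]
and the whole proof is reduced to this vanishing.

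Theorem \ref{main} does not apply to $\mathcal R$ directly, since $\mathcal R$ is not of the form $\mathcal O_{M(H^\infty)}^B$: its local structure degenerates along the common zero set of $f_1,\dots,f_m$. The plan is to establish $H^1(M(H^\infty);\mathcal R)=0$ by a constrained version of the standard partition-of-unity argument, powered by the Banach-valued $\bar\partial$-equation method of Theorem \ref{dbar} (the cover, partitions of unity and the requisite Carleson-type estimates being organized, as elsewhere in the paper, around Lemma \ref{polyhedr} and Theorem \ref{runge}, after passing to $\Di$ by Proposition \ref{sheaf}). Concretely, from local multipliers $g_i^{(k)}\in\mathcal O(U_k;A)$ with $\sum_i g_i^{(k)}f_i=f$ and a $C^\infty$ partition of unity $\{\psi_k\}$ subordinate to the trace cover $\{U_k\cap\Di\}$ one forms the smooth solution $g_i^0:=\sum_k\psi_k g_i^{(k)}$; then $\sum_i g_i^0 f_i=f$, and the $(0,1)$-forms $\omega_i:=\bar\partial g_i^0$ are $\bar\partial$-closed and satisfy the linear constraint $\sum_i\omega_i f_i=0$. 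It now suffices to solve $\bar\partial h_i=\omega_i$ with $h_i$ of relatively compact image and \emph{again} $\sum_i h_i f_i=0$, for then $g_i:=g_i^0-h_i$ are holomorphic with $\sum_i g_i f_i=f$; this is the $\mathcal R$-valued $\bar\partial$-problem. Away from the common zeros of $f_1,\dots,f_m$ the constraint is accommodated by a Koszul-type splitting of the $\omega_i$ (kept $\bar\partial$-closed), so that the correction may be taken in the form $h_i=\sum_j f_j\theta_{ij}$ with $\theta_{ij}=-\theta_{ji}$ and $\sum_i h_i f_i=0$ automatically; the real difficulty is concentrated at the common zeros, where the hypothesis $f\in H^0(\mathcal I)$ must be used to produce compatible local corrections, and the global solution with the needed bounds is then assembled by an approximation--iteration (Mittag--Leffler) scheme built around Theorem \ref{runge}.

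The step I expect to be the main obstacle is precisely this $\mathcal R$-valued $\bar\partial$-problem at the common zero set of $f_1,\dots,f_m$: solving $\bar\partial h_i=\omega_i$ subject to $\sum_i h_i f_i=0$ while retaining the estimates needed to run the scheme — equivalently, controlling the cohomology of the ``degenerate'' sheaf $\mathcal R$ at points where the Koszul complex ceases to be exact. This is the infinite-dimensional, non-manifold counterpart of Oka's coherence theorem, and it is where the fibre (Gleason-part) structure of $M(H^\infty)$ near that zero set has to be combined with the fine form of Theorem \ref{dbar}. Once $H^1(M(H^\infty);\mathcal R)=0$ has been proved, the theorem follows formally from the exact sequence above: $f\in H^0(\mathcal I)=\alpha\bigl(H^0((\mathcal O_{M(H^\infty)}^A)^m)\bigr)$ means $f=\sum_i g_i f_i$ with $g_i\in\mathcal O(M(H^\infty);A)$, i.e.\ $f\in I$.
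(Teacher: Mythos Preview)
Your proposal stops short of a proof: you reduce correctly to $H^1(M(H^\infty);\mathcal R)=0$, but then explicitly flag the constrained $\bar\partial$-problem near the common zero locus of $f_1,\dots,f_m$ as an unresolved ``main obstacle'' and only gesture at a Runge/Mittag--Leffler scheme to handle it. That scheme is never carried out, and the tools you invoke for it (Lemma~\ref{polyhedr}, Theorem~\ref{runge}) play no role in the paper's argument.

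The paper's route is shorter and sidesteps your obstacle entirely. Two moves do the work. First, refine the given cover via Lemma~\ref{le4.2} to $(W_j)_{1\le j\le k}$ with the property that every nonempty intersection $W_r\cap W_s$, $r\ne s$, lies in $M_a$; then the smooth global solution $h_i=\sum_s\varphi_s\, g_{is}$ built from the associated $C^\infty$ partition of unity satisfies $\sum_i h_i f_i=f$, and each $\bar\partial h_i$ has \emph{compact support in $M_a$}. Second --- and this is the point you miss --- the paper does not attempt the constrained system $\bar\partial u_i=\omega_i$, $\sum_i u_i f_i=0$. Instead it applies Wolff's antisymmetric Koszul correction from the classical corona proof: solve the \emph{unconstrained} equations $\bar\partial b_{is}=h_i\,\bar\partial h_s$ (each right-hand side compactly supported in $M_a$, so Theorem~\ref{dbar} furnishes $b_{is}\in C(M(H^\infty);A)$ in one stroke) and set
\[
g_i:=h_i+\sum_{s=1}^m(b_{is}-b_{si})\,f_s.
\]
Antisymmetry of $b_{is}-b_{si}$ forces $\sum_i g_if_i=\sum_i h_if_i=f$ regardless of the $b_{is}$, and the paper computes $\bar\partial g_i=0$ directly from $\sum_s f_s h_s=f$. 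There is no approximation step, no iteration, and no separate analysis at the common zeros: the antisymmetric ansatz builds the relation-module constraint in for free, so the ``degenerate Koszul'' difficulty you worry about never has to be confronted. The ingredients you should have reached for are Lemma~\ref{le4.2} (to confine all $\bar\partial$-data to $M_a$) and the Wolff trick (to decouple the $\bar\partial$-equations from the ideal constraint), not Theorem~\ref{runge}.
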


In the proof we use a standard argument involving Koszul complexes which reduces the statement to a question on existence of bounded on the boundary solutions of certain  $A$-valued $\bar\partial$-equations on $\Di$ similar to those in Wolff's proof of Carleson's corona theorem, see, e.g., \cite[Ch.~VIII.2]{Ga}. However, since the target space $A$ may be infinite dimensional, the classical duality method allowing to get such solutions for scalar  $\bar\partial$-equations does not work anymore. We use instead some topological properties of $M(H^\infty)$ together with our Theorem \ref{dbar} which provides bounded solutions of $A$-valued $\bar\partial$-equations with `supports` in the set of nontrivial Gleason parts of $M(H^\infty)$. The fact that $\Di$ is dense in $M(H^\infty)$ is not used in the proof; hence, Theorem \ref{th2} gives yet another proof of the corona theorem for $H^\infty$.

As a corollary we obtain
\begin{Th}\label{th3}
$M(H_{\rm comp}^\infty(A))\cong M(H^\infty)\times M(A)$.
\end{Th}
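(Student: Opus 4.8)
First I would produce the obvious candidate map and verify that it is a continuous bijection of compact Hausdorff spaces. Using the isomorphism $\mathcal O(M(H^\infty);A)\cong H_{\rm comp}^\infty(A)$ recorded just before Theorem \ref{th2}, I identify each $f\in H_{\rm comp}^\infty(A)$ with a continuous $A$-valued function $\hat f$ on $M(H^\infty)$ and define
\[
\Phi:M(H^\infty)\times M(A)\longrightarrow M(H_{\rm comp}^\infty(A)),\qquad \Phi(\xi,\eta)(f):=\eta\bigl(\hat f(\xi)\bigr).
\]
Since $\widehat{fg}(\xi)=\hat f(\xi)\hat g(\xi)$ in $A$ and $\eta$ is multiplicative, $\Phi(\xi,\eta)$ is a unital homomorphism of $H_{\rm comp}^\infty(A)$, and $|\Phi(\xi,\eta)(f)|\le\|\hat f(\xi)\|_A\le\|f\|$, so it indeed lies in $M(H_{\rm comp}^\infty(A))$. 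Continuity of $\Phi$ is soft: for fixed $f$ the map $\xi\mapsto\hat f(\xi)$ is continuous into $(A,\|\cdot\|_A)$ by Definition \ref{def1}, the pairing $M(A)\times A\to\Co$, $(\eta,a)\mapsto\eta(a)$, is jointly continuous (a one-line weak$^*$ estimate using $\|\eta\|\le 1$), and $(\xi,\eta)\mapsto\Phi(\xi,\eta)(f)$ is their composition.

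Injectivity of $\Phi$ is also soft: restricting a character of $H_{\rm comp}^\infty(A)$ to the isometric copy $H^\infty\cdot 1_A$ recovers the first coordinate, because $h\cdot 1_A\mapsto h(\xi)$ and $H^\infty$ separates the points of $M(H^\infty)$; restricting it to the isometric copy of $A$ formed by the constant functions recovers the second coordinate. The real content is surjectivity. Fix $\psi\in M(H_{\rm comp}^\infty(A))$ and put $\xi:=\psi|_{H^\infty\cdot 1_A}\in M(H^\infty)$, $\eta:=\psi|_A\in M(A)$; I must show $\psi=\Phi(\xi,\eta)$. Because $\psi$ is multiplicative it annihilates the ideal $J\subset H_{\rm comp}^\infty(A)$ generated by $\{h\cdot 1_A:h\in H^\infty,\ h(\xi)=0\}$, each generator being sent to $h(\xi)=0$. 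Writing $f=g+\hat f(\xi)\cdot 1_A$ with $g:=f-\hat f(\xi)\cdot 1_A$, so that $\hat g(\xi)=0$, and using $\psi(\hat f(\xi)\cdot 1_A)=\eta(\hat f(\xi))$ since $\hat f(\xi)\cdot 1_A$ is a constant function, the whole matter reduces to the claim that every $g\in\mathcal O(M(H^\infty);A)$ with $\hat g(\xi)=0$ lies in $J$.

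Here Theorem \ref{th2} does the work. By that theorem together with compactness of $M(H^\infty)$, it suffices to produce, for every $x\in M(H^\infty)$, a neighbourhood $U_x$ and finitely many $h\in H^\infty$ vanishing at $\xi$ such that $g|_{U_x}$ lies in the ideal of $\mathcal O(U_x;A)$ generated by the corresponding $h\cdot 1_A$: passing to a finite subcover and collecting all the $h$'s into one finite list $f_1,\dots,f_m$ (each with $\psi(f_j)=0$), Theorem \ref{th2} gives $g\in(f_1,\dots,f_m)\subset J$, whence $\psi(g)=0$ and $\psi(f)=\eta(\hat f(\xi))=\Phi(\xi,\eta)(f)$. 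For $x\neq\xi$ this local statement is immediate: choose $h\in H^\infty$ with $h(\xi)=0$ and $h(x)\neq 0$, and a neighbourhood $U_x$ of $x$ whose closure misses the zero set of $h$; then $1/h\in\mathcal O(U_x)$, so $h\cdot 1_A$ is invertible in $\mathcal O(U_x;A)$ and $g|_{U_x}=(h\cdot 1_A)|_{U_x}\cdot(g/h)|_{U_x}$ with $(g/h)|_{U_x}\in\mathcal O(U_x;A)$. For $x=\xi$ it is immediate when $\xi\in\Di$ (take $h(z)=z-\xi$ and divide). For $\xi\notin\Di$ one must use the fine structure of $M(H^\infty)$ near $\xi$: when $\xi$ lies in a nontrivial Gleason part, Hoffman's theorem furnishes an interpolating Blaschke product $b$ with $b(\xi)=0$ that serves as a coordinate on the part, and one shows $g/b$ extends holomorphically across $\xi$; when $\xi$ lies in a trivial part one instead invokes the $\bar\partial$-machinery behind Theorem \ref{dbar}. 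I expect this last local division at a corona point — which carries all the analytic substance of the statement — to be the principal obstacle, the rest being purely formal. Granting it, $\Phi$ is a continuous bijection, its source is compact and its target Hausdorff, so $\Phi$ is a homeomorphism, which is exactly $M(H_{\rm comp}^\infty(A))\cong M(H^\infty)\times M(A)$.
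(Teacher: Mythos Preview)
Your construction of $\Phi$, the verification of continuity, and the injectivity argument are essentially the same as in the paper. The divergence is in surjectivity, and here there is a genuine gap that you yourself flag.

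You reduce surjectivity to the claim that every $g\in\mathcal O(M(H^\infty);A)$ with $\hat g(\xi)=0$ lies in the ideal generated by finitely many $h\cdot 1_A$ with $h\in H^\infty$ and $h(\xi)=0$, planning to verify this locally and invoke Theorem~\ref{th2}. The local step at $x\ne\xi$ and at $\xi\in\Di$ is fine. But at $\xi\in M_s$ you offer only the phrase ``invoke the $\bar\partial$-machinery behind Theorem~\ref{dbar}'' and then explicitly grant the step. This does not work as stated: Theorem~\ref{dbar} solves $\bar\partial$-equations with support in $M_a$ and gives no handle on division at points of $M_s$; moreover, there is no holomorphic function that serves as a local coordinate at a one-point Gleason part, so the division paradigm you use elsewhere has no analogue there. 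Even the case $\xi\in M_a\setminus\Di$ is sketched rather than proved---you would still have to check that $g/b$ extends to an element of $\mathcal O(U;A)$ on a full $M(H^\infty)$-neighbourhood of $\xi$, not merely on the Gleason part. So the step you isolate as ``the principal obstacle'' is left entirely open, and it is not clear it is even true in the form you need.

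The paper's surjectivity argument sidesteps this difficulty. Rather than fixing $\psi$ and trying to factor $g$ through scalar functions, it proves the equivalent corona statement: if $f_1,\dots,f_m\in\mathcal O(M(H^\infty);A)$ have no common zero on $M(H^\infty)\times M(A)$, then $\sum f_jg_j=1$ for some $g_j\in\mathcal O(M(H^\infty);A)$. The local input for Theorem~\ref{th2} is then much softer. At each $z\in M(H^\infty)$ the values $f_1(z),\dots,f_m(z)$ generate the unit ideal of $A$ (since no $\eta\in M(A)$ annihilates them all), so one can pick constants $g_1,\dots,g_m\in A$ with $\sum f_j(z)g_j=1$; by continuity of the $f_j$, the function $h(w):=\sum f_j(w)g_j$ satisfies $\|1-h\|_A\le\tfrac12$ on a neighbourhood $U$ of $z$, whence $h^{-1}=\sum_{k\ge0}(1-h)^k\in\mathcal O(U;A)$ and $\sum f_j\cdot(g_jh^{-1})=1$ on $U$. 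No division is required, and the argument is uniform over all of $M(H^\infty)$, including $M_s$. Theorem~\ref{th2} then globalizes, and surjectivity follows by the standard equivalence between the corona condition and density of the image of $\Phi$.
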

This would follow from Theorem \ref{th1} if we knew that $H^\infty$ has the approximation property. However, Theorem \ref{th2} yields also results not following from this property; one of them, an analog of Wolff's theorem, see, e.g., \cite[Ch.~VIII, Th.~2.3]{Ga}, is presented below.

Recall that $M(H^\infty)$ is disjoint union of an open subset $M_a$ of nontrivial Gleason parts ({\em analytic disks}) and a closed subset $M_s$ of trivial (one-pointed) Gleason parts (see Section 2 for definitions).
\begin{Th}\label{square}
Let the algebra $A\subset C(X)$, $X$ a compact Hausdorff space, be self-adjoint with respect to the complex conjugation. Fix an $\omega\in C\bigl([0,\infty)\bigr)$ positive on $(0,\infty)$ such that $\lim_{t\to 0^+}\frac{\omega(t)}{t^3}=0$.
Assume that $f_1,\dots, f_m,\, f\in S(A):=S(H^\infty ;A)$ satisfy 
\begin{itemize}
\item[(A)]
\[
|f(z)|\le c\cdot\omega\left(\max_{1\le j\le m}|f_j(z)|\right)\quad\text{for some}\quad c>0\quad\text{and all}\quad z\in\Di\times X ;
\]
\item[(B)]
\[
\max_{1\le j\le m}|f(x)|\ge\delta\quad\text{for some}\quad \delta>0\quad\text{and all}\quad x\in M_s\times X .
\]
\end{itemize} 
Then $f$ belongs to the ideal $I\subset S(A)$ generated by $f_1,\dots, f_m$.
\end{Th}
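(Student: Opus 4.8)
The plan is to derive Theorem~\ref{square} from Theorem~\ref{th2}: I shall build a finite open cover of $M(H^\infty)$ over each member of which the required local ideal membership is either a corona statement (where (B) keeps $\max_j|f_j|$ away from $0$) or a Wolff-type statement (where (A) controls $f$). Two preliminaries. First, since $A\subset C(X)$ is a closed self-adjoint subalgebra with the supremum norm, it is a $C^*$-subalgebra, so $A\cong C(M(A))$; in particular $\overline a\in A$ for $a\in A$ and $A$ is inverse-closed, so that expressions such as $\overline{f_j}\big/\sum_k|f_k|^2$ take values in $A$ wherever $\sum_k|f_k|^2$ is bounded below and extend to continuous $A$-valued functions across the common zero set of $f_1,\dots,f_m$ once $f$ satisfies a cubic bound. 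Second, restriction to $\Di$ identifies $S(A)$ with $\mathcal O(M(H^\infty);A)$ (using compactness of $X$), so the ideal $I$ of the statement equals the ideal generated by $f_1,\dots,f_m$ in $\mathcal O(M(H^\infty);A)$ and Theorem~\ref{th2} applies verbatim.

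Write $\rho:=\max_{1\le j\le m}|f_j|$, a continuous function on $M(H^\infty)\times X$. Since $\lim_{t\to0^+}\omega(t)/t^3=0$ and $\rho$ is bounded, (A) gives a constant $c'>0$ with $|f|\le c'\rho^3\le c'\bigl(\sum_j|f_j|^2\bigr)^{3/2}$ on $\Di\times X$, which is Wolff's hypothesis for the data $f_1,\dots,f_m,f$. By (B), $\rho\ge\delta$ on the compact set $M_s\times X$, so by the tube lemma there is an open $U_0\supset M_s$ in $M(H^\infty)$ with $\rho>\delta/2$ on $U_0\times X$. As $M_a$ is open and $M(H^\infty)\setminus U_0$ is compact and contained in $M_a$, normality yields an open $U_1$ with $M(H^\infty)\setminus U_0\subset U_1\subset\overline{U_1}\subset M_a$. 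Finally, since interiors of polyhedra form a basis of the topology of $M(H^\infty)$, the compact set $M(H^\infty)\setminus U_1\subset U_0$ is covered by finitely many such interiors $\dot\Pi_1,\dots,\dot\Pi_p$, each contained in $\{\rho>\delta/2\}$. Then $\{U_1,\dot\Pi_1,\dots,\dot\Pi_p\}$ is a finite open cover of $M(H^\infty)$, and by Theorem~\ref{th2} it suffices to verify local ideal membership of $f$ over each of its members.

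Over each $\dot\Pi_i$ one has $\rho>\delta/2$ on $\dot\Pi_i\times X$, so no character annihilates all of $f_1,\dots,f_m$; hence by the corona theorem for bounded $A$-valued holomorphic functions on a polyhedron (which is part of the maximal ideal space computation behind Theorem~\ref{th3}, the character space being the polyhedron times $M(A)$) they generate the unit ideal of $\mathcal O(\dot\Pi_i;A)$, and writing $1=\sum_j f_j g_j$ there gives $f|_{\dot\Pi_i}=\sum_j f_j(f g_j)$ in the ideal they generate. Over $U_1$, where $\overline{U_1}\subset M_a$, I run Wolff's scheme: on $\Di$ set $\varphi_j:=\overline{f_j}\big/\sum_k|f_k|^2$ (extended by $0$ across the common zeros of the $f_j$, legitimate by the cubic bound), so $\sum_j f_j(f\varphi_j)=f$ off that zero set, and then correct the nonholomorphic terms $f\varphi_j$ by antisymmetric solutions of $A$-valued $\bar\partial$-equations assembled from $f$ and the $\bar\partial\varphi_j$. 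After passage to $M(H^\infty)$ the data of these equations is concentrated on the set of nontrivial Gleason parts, so Theorem~\ref{dbar} furnishes bounded solutions; the precise gain $\omega(t)=o(t^3)$, rather than $O(t^3)$, is what makes the $A$-valued Carleson-type estimates needed here hold uniformly up to the common zero set. The corrected $g_j$ are holomorphic and bounded on $U_1\cap\Di$, Proposition~\ref{sheaf} extends them to $\mathcal O(U_1;A)$, and $f|_{U_1}=\sum_j f_j|_{U_1}\,g_j$. This verifies the hypotheses of Theorem~\ref{th2}, which then yields $f\in I$.

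The main obstacle is the construction over $U_1$. In the scalar case this last step is exactly Wolff's theorem, proved through a duality argument (Carleson measures paired with $H^1$) that has no analogue when the target is the infinite-dimensional algebra $C(M(A))$; replacing it is precisely the role of Theorem~\ref{dbar}. The delicate points are to arrange that the $\bar\partial$-data is supported in the set of nontrivial Gleason parts, so that Theorem~\ref{dbar} applies — this is exactly where the split of $M(H^\infty)$ into the $U_0$-region (dictated by (B)) and the $U_1$-region enters — and to carry out the $A$-valued analogue of Wolff's estimates using $\omega(t)=o(t^3)$ in place of the scalar duality. The remaining ingredients — the reduction to polyhedra, the corona statement over the $\dot\Pi_i$, and the extension via Proposition~\ref{sheaf} — are comparatively routine within the machinery already developed in the paper.
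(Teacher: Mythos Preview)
Your overall architecture---split $M(H^\infty)$ into a region near $M_s$ where (B) gives a lower bound and a region inside $M_a$ where Wolff's scheme applies, then patch by Theorem~\ref{th2}---matches the paper's, but the execution on the $M_a$-piece has a genuine gap, and your treatment of the $M_s$-piece takes an unnecessary detour.

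\textbf{The $M_a$-piece.} You propose to run Wolff's construction on a single open $U_1\Subset M_a$ and feed the data $G_{jk}:=f\varphi_j\,\bar\partial\varphi_k$ into Theorem~\ref{dbar}. But Theorem~\ref{dbar} (and Lemma~\ref{rest}) is stated only for forms of class $C^\infty$ on $E(S,\beta G)$, and the Wolff data is merely continuous: the condition $\omega(t)=o(t^3)$ buys exactly continuity of $G_{jk}$ across the common zero set $Z$, not higher regularity, since $\partial_{\bar z}G_{jk}$ would involve second derivatives of $\varphi_k$ and thus an $o(t^5)$-type bound you do not have. So Theorem~\ref{dbar} is not available here as stated. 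The paper avoids this obstruction by \emph{not} using Theorem~\ref{dbar} at this step. It covers the $M_a$-region by coordinate charts $R_{U,H}\Subset R_{U',H}\Subset M_a$, identifies each with $\Di\times H\subset\Di_r\times H$, observes (via Cauchy estimates on $\Di_r$) that the $f_j'$ are uniformly bounded on $\Di_s\times H\times X$, and then solves $\bar\partial b_{jk}=G_{jk}$ by the elementary Cauchy--Green integral on $\Di$ with parameters $(\xi,x)\in H\times X$. That formula needs only continuity of $G_{jk}$, and for each fixed $(\xi,x)$ the data is either $0$ or real-analytic in $z$, so the solution is smooth slice-by-slice. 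This produces holomorphic $g_j\in\mathcal O(R_{U,H};A)$ with $\sum_j f_jg_j=f$ on each chart, which is exactly the local input Theorem~\ref{th2} requires.

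\textbf{The $M_s$-piece.} Covering $M(H^\infty)\setminus U_1$ by interiors of polyhedra and invoking a ``corona theorem for polyhedra'' is not supported by the paper: Theorem~\ref{th3} computes the maximal ideal space of $\mathcal O(M(H^\infty);A)$, not of $\mathcal O(\dot\Pi;A)$. You don't need any of this. As in the proof of Theorem~\ref{th3}, for each $w$ with $\rho(w,\cdot)>\delta/2$ on $X$ the elements $f_1(w),\dots,f_m(w)$ generate $A$, so there exist $g_1,\dots,g_m\in A$ with $\sum f_j(w)g_j=1$; by continuity this persists on a neighbourhood $U_w$ of $w$, and the (holomorphic) constants $g_j$ already solve the local problem there. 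Finitely many such $U_w$ suffice by compactness. This is exactly what the paper does.
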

\begin{E}\label{ex1}
{\rm A sequence $\{z_n\}\subset\Di$ is called {\em interpolating} for $H^\infty$ if the interpolation problem $f(z_n)=w_n$, $n\in\N$, has a solution $f\in H^\infty$ for every bounded sequence of complex numbers $\{w_n\}$. A {\em Blaschke product} $B(z):=\prod_{j\ge 1}\frac{z-z_j}{1-\bar z_j z}$, $z\in\Di$, is called interpolating if its set of zeros $\{z_j\}$ is an interpolating sequence for $H^\infty$.

Consider interpolating Blaschke products $B_1,\dots, B_m$. Then each $|B_j|$ is strictly positive on $M_s$, see, e.g., \cite[Ch.~X]{Ga}. 
Assume that $f_1,\dots, f_m,\, f\in S(A)$, where $A\subset C(X)$ is self-adjoint, satisfy for
some $c, \delta>0$
\begin{equation}\label{eq1.2}
\max_{1\le j\le m}|f_j(w)|\ge\delta\quad\text{for all}\quad w\in M(S(A));
\end{equation}
\begin{equation}\label{eq1.3}
|f(z,x)|\le c\cdot\omega\left(\max_{1\le j\le m}|B_j(z)f_j(z,x)|\right)\quad \text{for all}\quad (z,x)\in\Di\times X.
\end{equation}
Then $f$ belongs to the ideal $I\subset S(A)$ generated by $B_1f_1,\dots, B_mf_m$.

In fact, the pullbacks of $B_1,\dots, B_m$ to $M(H^\infty)\times X$ satisfy  condition (B) of Theorem \ref{square}.
This and \eqref{eq1.2} imply that $B_1f_1,\dots, B_mf_m$ satisfy this condition as well. Thus the required result follows from the theorem.
}
\end{E}
\begin{R}\label{product}
{\rm Assumptions of Theorem \ref{square} do not imply that $f\in I$. Indeed, if $B_1$, $B_2$ are interpolating Blaschke products such
that $\inf_{z\in\Di} (|B_1(z)|+|B_2(z)|)=0$, then $|B_1(z)B_2(z)|\le\max\{|B_1^2(z)|,|B_2^2(z)|\}$, $z\in\Di$, and $\max\{|B_1^2|,|B_2^2|\}$ is strictly positive on $M_s$, but $B_1B_2$ does not belong to the ideal $I\subset H^\infty$ generated by $B_1^2, B_2^2$, see \cite{R}. It is still unclear whether the conclusion of the theorem is valid under assumption (A) only, and whether the condition $\lim_{t\to 0^+}\frac{\omega(t)}{t^3}=0$ can be replaced by $\lim_{t\to 0^+}\frac{\omega(t)}{t^2}=0$ (cf. \cite[Ch.~VIII.2]{Ga} for a similar problem).
}
\end{R}

Let $S_N(H^\infty):=S(H^\infty;\dots ; H^\infty)$ be the $N$-dimensional slice algebra on $M(H^\infty)^N$ of continuous functions $f$ such that
$f(x, \cdot, y)\in H^\infty$ for each  $x\in M(H^\infty)^{k-1}$ and $y\in M(H^\infty)^{N-k}$, $k=1,\dots , N$. A major open problem posed in the mid of 1960s asks whether the maximal ideal space of $S_N(H^\infty)$ is $M(H^\infty)^N$, see, e.g., \cite{Cu} and references therein. This fact is obtained now as a corollary of Theorem \ref{th3}.
\begin{C}\label{cor1}
$M(S_N(H^\infty))=M(H^\infty)^N$.
\end{C}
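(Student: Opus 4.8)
The plan is to prove $M(S_N(H^\infty)) = M(H^\infty)^N$ by induction on $N$, using Theorem \ref{th3} as the inductive engine. The case $N=1$ is trivial since $S_1(H^\infty) = H^\infty$. The crucial observation for the inductive step is the identification $S_{N}(H^\infty) \cong H_{\rm comp}^\infty(S_{N-1}(H^\infty))$ as Banach algebras, where on the right $A := S_{N-1}(H^\infty)$ is the $(N-1)$-dimensional slice algebra, viewed as a commutative unital complex Banach algebra $A \subset C\bigl(M(H^\infty)^{N-1}\bigr)$. Indeed, a continuous function $f$ on $M(H^\infty)^N$ lies in $S_N(H^\infty)$ iff it is holomorphic in the first variable and, for each value of the first variable, lies in $S_{N-1}(H^\infty)$ in the remaining $N-1$ variables; since $M(H^\infty)^{N-1}$ is compact, "holomorphic in the first variable with values in continuous functions on a compact set" is the same as "holomorphic with relatively compact image in $A$". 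One should check that the sup-norm on $S_N(H^\infty)$ agrees with the norm of $H_{\rm comp}^\infty(A)$, which is immediate since in both cases the norm is the supremum of $|f|$ over $M(H^\infty)^N = \Di' \times M(H^\infty)^{N-1}$ (using density of $\Di$ in $M(H^\infty)$ and the identification $\mathcal O(M(H^\infty);A) \cong H_{\rm comp}^\infty(A)$).

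Granting this identification, Theorem \ref{th3} applied to $A = S_{N-1}(H^\infty)$ gives
\[
M\bigl(S_N(H^\infty)\bigr) = M\bigl(H_{\rm comp}^\infty(A)\bigr) \cong M(H^\infty) \times M(A) = M(H^\infty) \times M\bigl(S_{N-1}(H^\infty)\bigr).
\]
By the inductive hypothesis $M(S_{N-1}(H^\infty)) = M(H^\infty)^{N-1}$, so the right-hand side is $M(H^\infty)^N$, completing the induction. One small point to record carefully is that the homeomorphism in Theorem \ref{th3} is the natural one, so that iterating it produces precisely the evaluation map $M(H^\infty)^N \to M(S_N(H^\infty))$ sending $(\varphi_1,\dots,\varphi_N)$ to $f \mapsto (\varphi_1 \otimes \cdots \otimes \varphi_N)(f)$; this guarantees the final identification is as topological spaces in the expected way and not merely an abstract homeomorphism.

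The main obstacle is not in the inductive step itself — which is essentially bookkeeping once Theorem \ref{th3} is available — but in verifying cleanly that $S_{N-1}(H^\infty)$ genuinely satisfies the hypotheses needed to apply Theorem \ref{th3}, namely that it is a commutative unital complex Banach algebra (clear) and, more substantively, that the algebra $H_{\rm comp}^\infty(S_{N-1}(H^\infty))$ really does coincide with $S_N(H^\infty)$ rather than with some a priori larger or smaller space of functions. The subtlety is that relative compactness of the image of a holomorphic map $\Di \to S_{N-1}(H^\infty)$ must be matched against the joint continuity defining $S_N(H^\infty)$; here one uses that a bounded holomorphic $A$-valued function on $\Di$ automatically has relatively compact image on compactly contained subdisks (a normal-families / Montel-type argument in the Banach-valued setting, cf. the discussion around Proposition \ref{sheaf}), together with the fact that for $A = C(Y)$-type algebras, sup-norm boundedness of $f(z,\cdot)$ uniformly in $z$ is exactly what joint continuity plus fiberwise membership provides. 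Once this identification is nailed down the corollary follows formally.
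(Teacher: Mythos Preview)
Your proposal is correct and follows essentially the same route as the paper: induction on $N$ via the identification $S_N(H^\infty)=S(H^\infty;S_{N-1}(H^\infty))\cong H_{\rm comp}^\infty(S_{N-1}(H^\infty))$ and an application of Theorem~\ref{th3}. The paper's proof is a two-line version of yours; it takes the identification $S(H^\infty;A)=H_{\rm comp}^\infty(A)$ for granted (this was stated earlier as the assertion that slice algebras are instances of $H_{\rm comp}^\infty(A)$), whereas you spell out the continuity/compactness matching explicitly.
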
 

\subsection{}
In this subsection we apply Theorem \ref{th3} to the study of certain operator corona problems, analogs of the Sz.-Nagy problem \cite{SN} on existence of a bounded holomorphic left inverse to an $H^\infty$ function on $\Di$ with values in the space of bounded linear operators between two separable Hilbert spaces. 

To formulate the result we recall the definition of covering dimension:

For a normal space $X$, $\text{dim}\, X\le n$ if every finite open cover
of $X$ can be refined by an open cover whose order $\le n + 1$. If $\text{dim}\, X\le n$ and
the statement $\text{dim}\, X\le n-1$ is false, we say $\text{dim}\, X = n$. For the empty set,
$\text{dim}\, \emptyset = -1$.

For a commutative complex unital Banach algebra $A$ by $M^{k,n}(A)$ we denote the space of $k\times n$ matrices, $k\le n$, with entries in $A$.
\begin{Th}\label{th4}
Let $F=(f_{ij}):\Di\to M^{k,n}(A)$, $k<n$, be such that each $f_{ij}\in H_{\rm comp}^\infty(A)$. Assume that there exists $\delta>0$ such that the family $h_1,\dots, h_\ell\in H^\infty(A)$ of minors of $F$ of order $k$ satisfies the corona condition:
\begin{equation}\label{eq2}
\sum_{i=1}^\ell |\varphi(h_i(z))|\ge\delta\quad\text{for all}\quad z\in\Di\quad\text{and}\quad \varphi\in M(A).
\end{equation}

If $M(A)$ is the inverse limit of a family of compact Hausdorff spaces $\{M_\alpha\}_{\alpha\in\Lambda}$ such that each $M_\alpha$ is homotopically equivalent to a metrizable compact space $X_\alpha$ with ${\rm dim}\,X_\alpha\le d$ and if $n-k-1\ge \lfloor\frac d2\rfloor$ for $d\geq 3$, and $n-k-1\ge 0$ otherwise, then there exists a matrix-function
$\widetilde F=(\widetilde f_{ij}):\Di\to M^{n,n}(A)$ with entries in $H_{\rm comp}^\infty(A)$ such that ${\rm det}\,\widetilde F=1$ and
$\widetilde f_{ij}=f_{ij}$ for $1\le i\le k$, $1\le j\le n$.
\end{Th}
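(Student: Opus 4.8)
The plan is to reduce the statement to a completion problem for a unimodular matrix row over the algebra $\mathcal{O}(M(H^\infty);A)\cong H^\infty_{\rm comp}(A)$, and then to solve that problem by lifting a completion from the quotients $A/\ker\varphi\cong\Co$ through the algebra $C(M(A))$ using covering-dimension and homotopy arguments, exactly as in the classical Bass-stable-range / suspension-theoretic treatment of operator corona problems. First I would use the hypothesis that the order-$k$ minors $h_1,\dots,h_\ell$ satisfy the corona condition \eqref{eq2}: since by Theorem \ref{th3} we have $M(H^\infty_{\rm comp}(A))\cong M(H^\infty)\times M(A)$, condition \eqref{eq2} says precisely that $h_1,\dots,h_\ell$ generate the whole algebra $\mathcal{O}(M(H^\infty);A)$, i.e.\ $\sum_i g_i h_i=1$ for suitable $g_i\in\mathcal{O}(M(H^\infty);A)$; this is the place where Theorem \ref{th2} (the Banach-valued corona theorem) enters, turning the pointwise lower bound into an actual Bézout identity. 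Consequently the $k\times n$ matrix $F$, viewed over $\mathcal{O}(M(H^\infty);A)$, has the property that its maximal minors generate the unit ideal, so $F$ defines a surjective bundle map and the problem becomes: extend $F$ to an invertible $n\times n$ matrix of determinant $1$ over this algebra.

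The key steps, in order, would be: (i) by standard homological algebra over a commutative ring (the "if the maximal minors are unimodular then the row extends to a matrix in $SL_n$ provided the stable rank is small enough" principle), it suffices to show that the obstruction to such an extension lives in a cohomology group of $M(H^\infty)\times M(A)$ that vanishes in the stated range; (ii) because $M(H^\infty)$ is, by Suárez's theorem, of covering dimension $2$ and behaves cohomologically like a nice space, and because $M(A)=\varprojlim M_\alpha$ with each $M_\alpha\simeq X_\alpha$ a metrizable compactum of dimension $\le d$, a Künneth/continuity argument bounds the relevant cohomological dimension of the product; (iii) the numerical condition $n-k-1\ge\lfloor d/2\rfloor$ (resp.\ $n-k-1\ge 0$ for $d\le 2$) is exactly the hypothesis under which the classifying-space computation — maps into the Stiefel-type homogeneous space $GL_n/GL_k$, whose first nonvanishing homotopy is in degree $2(n-k)-1$ after accounting for the complex structure — guarantees that the obstruction class is killed; one reduces to the $M_\alpha$ by the continuity of Čech cohomology under inverse limits, then to the metrizable models $X_\alpha$ by homotopy invariance, and there applies classical obstruction theory. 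The holomorphic (as opposed to merely continuous) nature of the extension is then recovered by appealing to Theorem \ref{main} and Theorem \ref{cousin}: vanishing of the relevant sheaf cohomology upgrades a continuous solution on $M(H^\infty)\times M(A)$ to a holomorphic one, i.e.\ the entries can be taken in $\mathcal{O}(M(H^\infty);A)=H^\infty_{\rm comp}(A)$, and the determinant can be normalized to $1$ by absorbing a nowhere-vanishing holomorphic factor as in Theorem \ref{cousin}.

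The main obstacle I expect is step (ii)–(iii): controlling the cohomology of the product $M(H^\infty)\times M(A)$ well enough to run obstruction theory. The space $M(H^\infty)$ is far from a manifold or CW complex — it is a huge compact space with a two-dimensional but fibered-over-the-maximal-ideal-space-of-$L^\infty$ structure — so one cannot naively invoke the Künneth theorem; the right move is presumably to use that finite-dimensional holomorphic line/vector bundles and the relevant homotopy classes of maps on $M(H^\infty)$ are controlled by its covering dimension $2$ (Suárez), reduce the $A$-factor to the finite-dimensional metrizable models $X_\alpha$, and then prove a product estimate of the shape $\dim\bigl(M(H^\infty)\times X_\alpha\bigr)\le 2+d$ together with the homotopy-theoretic vanishing range $2(n-k)\ge \text{(that dimension)}$, which after rewriting is the stated inequality $n-k-1\ge\lfloor d/2\rfloor$. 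A secondary technical point is ensuring that all the lifting is compatible with the holomorphic structure and that passing to the inverse limit $M(A)=\varprojlim M_\alpha$ does not destroy the Bézout identity coming from Theorem \ref{th2}; this is handled by the continuity properties of the algebras $\mathcal{O}(M(H^\infty);A)$ under the corresponding direct limits of subalgebras, so that a solution over a finite stage lifts back to $A$.
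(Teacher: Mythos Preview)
Your overall architecture matches the paper's: identify $M(H^\infty_{\rm comp}(A))$ with $M(H^\infty)\times M(A)$ via Theorem~\ref{th3}, reduce the completion problem to a topological one on this product, and handle the topology by passing through the inverse system $M(H^\infty)\times M_\alpha\simeq M(H^\infty)\times X_\alpha$ together with the dimension bound ${\rm dim}\bigl(M(H^\infty)\times X_\alpha\bigr)\le d+2$ coming from Su\'arez's ${\rm dim}\,M(H^\infty)=2$. The paper packages the topological step slightly differently---as triviality of the complementary bundle $\eta$ in $\theta^k\oplus\eta=\theta^n$, via \cite[Ch.~9, Th.~1.5]{Hus} after reducing to polyhedra---but your obstruction-theoretic phrasing with Stiefel-type targets is equivalent.

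There is, however, a genuine gap in your passage from a continuous completion to a holomorphic one. You propose to use Theorems~\ref{main} and~\ref{cousin}, but neither suffices. Theorem~\ref{main} gives vanishing of \emph{linear} cohomology $H^k(M(H^\infty);\mathcal O^B_{M(H^\infty)})$; the completion problem is a nonlinear $GL_n$-valued question, and abelian vanishing does not by itself yield an Oka--Grauert principle. Theorem~\ref{cousin} handles only the rank-one case (holomorphic line bundles are trivial), and the paper explicitly records in Remark~\ref{product}'s vicinity (see the remark after Theorem~\ref{cousin}) that the analogous statement for higher-rank holomorphic bundles on $M(H^\infty)$ is \emph{open}. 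So your proposed mechanism cannot work as stated.

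The paper bypasses this entirely by invoking \cite[Th.~3]{Lin}, a general Banach-algebra result (of Arens--Novodvorski type) asserting that for any commutative unital Banach algebra $B$, a $k\times n$ matrix over $B$ whose maximal minors satisfy the corona condition extends to an element of $SL_n(B)$ if and only if its Gelfand transform extends to an element of $SL_n(C(M(B)))$. This single citation replaces your entire ``continuous $\to$ holomorphic'' step and requires nothing from Theorems~\ref{main} or~\ref{cousin}; in particular, Theorem~\ref{th2} enters only indirectly, through Theorem~\ref{th3}. Once you substitute Lin's theorem for your appeal to sheaf cohomology, the rest of your outline is correct.
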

\begin{R}\label{rem1}
{\rm
(a) According to the result of Marde\v{s}i\'c \cite{M} any compact Hausdorff space $X$ with ${\rm dim}\, X\le d<\infty$ can be presented as the inverse limit of a family of metrizable compact spaces $X_\alpha$ with ${\rm dim}\, X_\alpha\le d$. In particular, Theorem \ref{th4} holds under the assumptions that ${\rm dim}\, M(A)\le d$ and $n-k-1\ge \lfloor\frac d2\rfloor$ for $d\ge 3$, and $n-k-1\ge 0$ otherwise.\\
(b) Theorem \ref{th4} provides conditions for an extension of $F$ up to an invertible bounded holomorphic $M^{n,n}(A)$-valued function. 
This, in particular, implies that under the conditions of the theorem $F$ is left invertible in the considered class.}
\end{R}

A Banach algebra $H_{\rm comp}^\infty(A)\, (\cong \mathcal O(M(H^\infty);A))$ for which Theorem \ref{th4} is valid for all $1\le k<n<\infty$ is called {\em Hermite}. (An equivalent definition is
that every finitely generated stably free $H_{\rm comp}^\infty(A)$-module is free.)

A Banach algebra $H_{\rm comp}^\infty(A)$ is called {\em projective free} if for any $n\in\N$ every idempotent matrix $F:\Di\to M^{n,n}(A)$ (i.e., such that $F^2=F$) with entries in $H_{\rm comp}^\infty(A)$ is conjugate by means of an invertible matrix $H:\Di\to M^{n,n}(A)$ with entries in $H_{\rm comp}^\infty(A)$ to a constant (idempotent) matrix. (An equivalent definition is that every finitely generated projective $H_{\rm comp}^\infty(A)$-module is free.) 

Note that any projective free algebra is Hermite.

\begin{Th}\label{th5}
If the maximal ideal space $M(A)$ of $A$  is the inverse limit of a family of metrizable contractible compact spaces, then the algebra
$H_{\rm comp}^\infty(A)$ is projective free.
\end{Th}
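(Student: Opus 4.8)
The plan is to prove directly that every idempotent matrix $F$ with entries in $R:=H_{\rm comp}^\infty(A)$ is conjugated to a constant idempotent by an invertible matrix over $R$. By Proposition \ref{sheaf} we may regard $R$ as $\mathcal O(M(H^\infty);A)$, and by Theorem \ref{th3} we have $M(R)\cong M(H^\infty)\times M(A)$, which is connected since both factors are (the first is the closure of $\Di$, the second an inverse limit of contractible compacta). First I would dispose of the extreme ranks: by connectedness the Gelfand transform $\widehat F$ has a constant rank $r$; if $r=0$ then $\widehat{\det(1-F)}\equiv 1$, so $1-F\in GL_n(R)$, which with $(1-F)^2=1-F$ forces $F=0$, and symmetrically $r=n$ forces $F=1$. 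Thus we may assume $1\le r\le n-1$, $n$ being the size of $F$.

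Next I would show that the topological vector bundle $E:=\text{Im}\,\widehat F$ over $M(R)$ --- and, by the same argument, its complement $\text{Im}(1-\widehat F)$ --- is trivial, so that $\widehat F$ is conjugate over $C(M(R))$ to the constant idempotent $J_r:=\bigl(\begin{smallmatrix}I_r&0\\0&0\end{smallmatrix}\bigr)$. Writing $M(A)=\varprojlim_\alpha X_\alpha$ with each $X_\alpha$ metrizable and contractible, we have $M(R)=\varprojlim_\alpha\bigl(M(H^\infty)\times X_\alpha\bigr)$, so, since every finite rank vector bundle over an inverse limit of compact Hausdorff spaces is pulled back from a finite stage of the system, $E$ is the pullback of a rank-$r$ bundle over $M(H^\infty)\times X_\alpha$ for some $\alpha$. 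Contractibility of $X_\alpha$ gives a homotopy equivalence $M(H^\infty)\times X_\alpha\simeq M(H^\infty)$, so $E$ is pulled back from a rank-$r$ bundle on $M(H^\infty)$; and every complex vector bundle of rank $\ge 1$ on $M(H^\infty)$ is trivial, because $M(H^\infty)$ has covering dimension $2$ and $\check{H}^2(M(H^\infty);\Z)=0$ (Su\'arez; this vanishing also follows from the exponential sheaf sequence together with Theorems \ref{main} and \ref{cousin}), while a rank-$r$ bundle with $r\ge 1$ over a compact space of covering dimension $\le 3$ is determined by its class in $\check{H}^2(\,\cdot\,;\Z)$.

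The decisive step is to upgrade this topological conjugacy to a conjugacy over $R$, i.e.\ to establish the pertinent Oka principle: a holomorphic idempotent over $\mathcal O(M(H^\infty);A)$ whose associated bundle on $M(H^\infty)\times M(A)$ is topologically trivial is conjugate to $J_r$ by an invertible holomorphic matrix. I would run the Cartan--Grauert gluing scheme adapted to $M(H^\infty)$. One first checks that $F$ is holomorphically trivial over the members of some finite open cover of $M(R)$ --- classically over the part consisting of analytic disks, and near trivial Gleason parts by the local structure of $M(H^\infty)$ combined with the fact that, after replacing $M(A)$ by a contractible $X_\alpha$, that factor carries no obstruction. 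One then removes the resulting holomorphic cocycle --- which is topologically trivial --- by a Grauert-type induction, the obstructions lying in $\check{H}^1$ of sheaves of germs of holomorphic $GL$-valued and (after linearization) $M^{r,r}(A)$-valued functions and being killed successively by means of the vanishing Theorem \ref{main}, the second Cousin Theorem \ref{cousin}, the Runge-type approximation Theorem \ref{runge}, and the ideal-membership Theorem \ref{th2}. The hard part will be exactly this nonabelian gluing in the presence of the possibly non-metrizable, infinite-dimensional factor $M(A)$; here the contractibility hypothesis is what makes the $M(A)$-direction inert at each approximating stage $X_\alpha$, reducing everything to the $\bar\partial$-theory of $M(H^\infty)$, of which the case $A=\Co$ --- the projective freeness of $H^\infty$ --- is already the core.
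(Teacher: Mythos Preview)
Your topological reduction --- showing that every finite-rank complex vector bundle on $M(H^\infty)\times M(A)$ is trivial, via the inverse-limit argument, the homotopy equivalence $M(H^\infty)\times X_\alpha\simeq M(H^\infty)$, and Su\'arez's results $\dim M(H^\infty)=2$, $\check H^2(M(H^\infty);\Z)=0$ --- is correct and is exactly what the paper does.

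The divergence is in your ``decisive step''. The paper does not run any Cartan--Grauert scheme: once topological triviality of all bundles on $M(R)$ is established, it simply invokes \cite[Th.~1.3]{BS}, a general criterion that passes directly from triviality of complex vector bundles on the maximal ideal space of a commutative unital Banach algebra to projective freeness of the algebra itself. No $\bar\partial$-theory enters this step at all.

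Your proposed alternative is a program rather than a proof. You yourself flag the nonabelian gluing as ``the hard part'' and do not carry it out; the tools you cite (Theorems~\ref{main}, \ref{cousin}, \ref{runge}, \ref{th2}) are abelian or ideal-theoretic and do not by themselves trivialize a $GL_n$-valued holomorphic cocycle. The Remark following Theorem~\ref{cousin} in fact records that holomorphic triviality of finite-rank holomorphic bundles on $M(H^\infty)$ is open --- a nonabelian problem closely related to the gluing you would need. Even for $A=\Co$, where projective freeness of $H^\infty$ is known, it is obtained in the literature by functional-calculus/$K$-theoretic methods, not by the $\bar\partial$-gluing you sketch. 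So your route has a genuine gap at precisely the point you identify; the paper's one-line appeal to \cite{BS} is how that difficulty is actually bypassed.
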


\begin{E}\label{ex2}
{\rm
(1) In \cite{S1} Su\'{a}rez proved that ${\rm dim}\, M(H^\infty)=2$. Therefore by Corollary \ref{cor1}, ${\rm dim}\, S_N(H^\infty)={\rm dim}\ M(H^\infty)^N=2N$. It is known that $H^\infty$ is projective free, see, e.g., \cite{Q}. Applying Theorem \ref{th4} we obtain
(a)
$S_2(H^\infty)$ is Hermite; (b)
If $N\ge 3$, then for a matrix $F=(f_{ij}):M(S_N(H^\infty))\to M^{k,n}(\Co)$, $n-k\ge N$, with entries in $S_N(H^\infty)$ whose family of minors $h_1,\dots, h_\ell$ of order $k$ satisfies 
\[
\sum_{i=1}^\ell |h_i(z)|>0\quad\text{for all}\quad z\in M(S_N(H^\infty)),
\]
there exists a matrix 
$\widetilde F=(\widetilde f_{ij}):\Di\to M^{n,n}(\Co)$ with entries in $S_N(H^\infty)$ such that ${\rm det}\,\widetilde F=1$ and
$\widetilde f_{ij}=f_{ij}$ for $1\le i\le k$, $1\le j\le n$.

\noindent (2) Let $G$ be a compact connected abelian topological group. A discrete (additive) semigroup $\Sigma_*\subset\widehat G$ ($:=$ the Pontriagin dual group of $G$) is called {\em pointed} if $0\in\Sigma_*$ and if $\pm x\in\Sigma_*$ implies that $x=0$. (It is known that one can introduce an order $\ge 0$ on $\widehat G$; then, e.g., $\Sigma_+:=\{x\in\widehat G\, ;\, x\ge 0\}$ and $\Sigma_-:=\{x\in\widehat G\, ;\, -x\ge 0\}$ are pointed semigroups.) 

Let $W_{\Sigma_*}(G)\subset C(G)$ be the Wiener algebra of functions with Bohr-Fourier spectra in $\Sigma_*$. It was shown in \cite{BRS} that the maximal ideal space of $W_{\Sigma_*}(G)$ is the inverse limit of a family of metrizable contractible compact spaces. Hence, Theorem \ref{th5} implies that $H_{\rm comp}^\infty(W_{\Sigma_*}(G))$ is projective free. In particular, $H_{\rm comp}^\infty(W_{\Sigma_*}(G))$ is Hermite.}
\end{E}

\subsection{}
Finally, we reformulate the problem on the approximation property for $H^\infty$ in local terms. 

For an open subset $U\subset M(H^\infty)$ by $H^\infty(U)$ we denote the Banach space of bounded holomorphic functions on $U$
with supremum norm. According to Proposition \ref{sheaf},
$H^\infty(U)$ is isomorphic to $H^\infty(U\cap \Di)$. 
\begin{Th}\label{approx}
Let $U\Subset V\subset M_a$ be open subsets and $B$ be a complex Banach space. Then for any function $f\in\mathcal O(V;B)$ its restriction
$f|_{U}$ belongs to $H^\infty(U)\otimes B$. 

In particular,
$H^\infty$ has the approximation property iff
for every complex Banach space $B$ and any function $f\in {\mathcal O}(M(H^\infty);B)$ there exists an open cover $(U_i)_{i\in I}$ of the set $M_s$ of trivial Gleason parts such that $f|_{U_i}\in H^\infty(U_i)\otimes B$ for all $i\in I$.
\end{Th}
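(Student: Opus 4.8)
The plan is to deduce the ``in particular'' equivalence from the displayed local statement, so I set up the reduction first and prove the local statement last.

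\emph{Reduction of the equivalence.} A continuous $f\colon M(H^\infty)\to B$ with $\varphi\circ f\in H^\infty$ for all $\varphi\in B^*$ has $f|_\Di$ weakly holomorphic, hence holomorphic, so $f\in\mathcal O(M(H^\infty);B)$; conversely every $f\in\mathcal O(M(H^\infty);B)$ is of this form, since $\varphi\circ f$ is a continuous extension to $M(H^\infty)$ of a bounded holomorphic function on $\Di$, i.e.\ lies in $H^\infty$. Thus $(H^\infty)_B=\mathcal O(M(H^\infty);B)$, and by Theorem \ref{th1} the algebra $H^\infty$ has the approximation property iff $\mathcal O(M(H^\infty);B)=H^\infty\otimes B$ for every $B$; since $H^\infty\otimes B\subset\mathcal O(M(H^\infty);B)$ always, this says: every $f\in\mathcal O(M(H^\infty);B)$ belongs to $H^\infty\otimes B$. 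The forward implication of the ``in particular'' part is then immediate, because restriction does not increase the norm \eqref{eq1}, so $f\in H^\infty\otimes B$ forces $f|_U\in H^\infty(U)\otimes B$ for every open $U$ and one may take $(U_i)$ to be the one-element cover of $M_s$. For the converse, suppose $(U_i)_{i\in I}$ covers $M_s$ with $f|_{U_i}\in H^\infty(U_i)\otimes B$. As $M_a$ is open in the compact space $M(H^\infty)$, hence locally compact, choose for each $x\in M_a$ open sets $x\in U_x\Subset V_x$ with $\overline{V_x}\subset M_a$; the local statement applied to $f|_{V_x}$ gives $f|_{U_x}\in H^\infty(U_x)\otimes B$. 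By compactness finitely many of the $U_i$ and $U_x$ cover $M(H^\infty)$; call them $W_1,\dots,W_N$, so $f|_{W_j}\in H^\infty(W_j)\otimes B$ for all $j$.

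\emph{The gluing step.} Fix $\varepsilon>0$ and pick finite sums $g_j=\sum_k a_{jk}\otimes b_{jk}$ with $a_{jk}\in H^\infty(W_j)$, $b_{jk}\in B$, and $\sup_{W_j}\|f-g_j\|_B<\varepsilon$. Choose a smooth partition of unity $\{\rho_j\}$ subordinate to $\{W_j\}$, selected so that the $(0,1)$-forms $\bar\partial\rho_j$ on $\Di$ obey the hypotheses of Theorem \ref{dbar}. Put $g:=\sum_j\rho_j g_j=\sum_{j,k}(\rho_j a_{jk})\otimes b_{jk}$, a finite sum of tensors with continuous scalar coefficients, with $\|f-g\|\le\sum_j\rho_j\|f-g_j\|<\varepsilon$ on $M(H^\infty)$. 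On $\Di$, since $\sum_j\bar\partial\rho_j=0$, one has $\bar\partial g=\sum_{j,k}(\bar\partial\rho_j\cdot a_{jk})\otimes b_{jk}=\sum_j\bar\partial\rho_j\cdot(g_j-f)$, of pointwise norm at most $\varepsilon\sum_j|\bar\partial\rho_j|$. Solving the scalar equations $\bar\partial u_{jk}=\bar\partial\rho_j\cdot a_{jk}$ coefficientwise by the operator of Theorem \ref{dbar} produces $u:=\sum_{j,k}u_{jk}\otimes b_{jk}$ with $\|u\|_{L^\infty(\Di)}\le C\varepsilon$. Then the scalar coefficients $\rho_j a_{jk}-u_{jk}$ of $g-u$ are bounded and $\bar\partial$-closed on $\Di$, hence lie in $H^\infty$, so $g-u\in H^\infty\otimes B$ and $\|f-(g-u)\|<(1+C)\varepsilon$; letting $\varepsilon\to0$ gives $f\in H^\infty\otimes B$, completing the equivalence. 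Carried out on $V$ in place of $M(H^\infty)$ — using that open subsets of $M_a$ are cohomologically trivial for $\mathcal O^B$, a consequence of Theorem \ref{dbar} whose solutions live on $M_a$ — the same argument patches local data into the local statement.

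\emph{The local statement.} Here I would invoke the analytic-disk structure of $M_a$ (Hoffman's theory of Gleason parts, as refined by Su\'arez). Cover the compact set $\overline U\subset V\subset M_a$ by finitely many open $W_j\subset V$ on each of which some interpolating Blaschke product serves as a fibre coordinate, so that $(W_j,H^\infty|_{W_j})$ is a holomorphic $\Di$-bundle over a compact Hausdorff base $K_j$, the bundle projection carries $C(K_j)$ into $H^\infty(W_j)$, and the fibre coordinate lies in $H^\infty(W_j)$. Shrinking to $W_j'$ (fibre radius $r<1$) and Taylor-expanding $f$ in the fibre variable by a Cauchy integral, $f=\sum_{n\ge0}c_n z^n$ with $c_n\in C(K_j';B)$ and the series uniformly convergent on $\overline{W_j'}$. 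Since $C(K_j')$ is the algebra of continuous functions on a compact space, it has the approximation property, so $c_n\in C(K_j')\otimes B$; pulling back, the partial sums lie in $H^\infty(W_j')\otimes B$ and converge uniformly, whence $f|_{W_j'}\in H^\infty(W_j')\otimes B$. Gluing the finitely many $W_j'$ covering $\overline U$ by the gluing step above, inside $V$, yields $f|_U\in H^\infty(U)\otimes B$.

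\emph{Expected main obstacle.} Two points carry the weight. First, turning the heuristic ``$M_a$ is locally a holomorphic $\Di$-bundle over a $C(X)$-base'' into a precise statement with the properties used above; this is where the fine structure theory of Gleason parts of $M(H^\infty)$ is needed. Second, and more delicate, choosing the partitions of unity in the gluing step so that the $\bar\partial$-data on $\Di$ satisfies the Carleson-type estimates required by Theorem \ref{dbar} and produces solutions of norm $O(\varepsilon)$; this is the analogue of the weight bounds in Wolff's proof of the corona theorem, and the bookkeeping near $M_s$, where no product structure is available, is the real technical load.
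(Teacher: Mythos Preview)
Your local statement coincides with the paper's Lemma~8.1: cover $\bar U$ by coordinate charts $R_{U_j,H_j}\cong\Di\times H_j$, Taylor-expand $f$ in the disk coordinate, and use that $C(H_j)$ has the approximation property. No disagreement there.

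The gluing step, however, is organized quite differently. The paper does \emph{not} rely on the quantitative continuity bound of $L_K$ in Theorem~\ref{dbar}. Instead it takes, for each $n$, approximants $f_{j,n}$ on the charts with error $\le 1/(2n)$, lets $B_n\subset B$ be the finite-dimensional span of the tensor factors occurring in the $f_{j,n}$, and packages the small cocycles $c_{ij,n}:=f_{i,n}-f_{j,n}$ into a single cocycle $c=(c_{\cdot\cdot,1},c_{\cdot\cdot,2},\dots)$ with values in the auxiliary space $\tilde B:=\ell^2(\bigoplus_n B_n)$. One then resolves $c$ \emph{once} (by Theorem~\ref{main} globally, or by Lemma~\ref{rest} locally) to get $c_i\in H^\infty_{\rm comp}(W_i\cap U;\tilde B)$; since the image of $c_i$ is relatively compact in $\tilde B$, its $n$th components $c_{i,n}$ are finite-rank (values in $B_n$) and satisfy $\sup\|c_{i,n}\|_B\to 0$. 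Then $f_n:=f_{i,n}-c_{i,n}$ are well-defined in $H^\infty(U)\otimes B$ and converge to $f$. What this buys is that no operator-norm estimate on the $\bar\partial$-solver is ever needed---only existence of a holomorphic resolution with relatively compact image.

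Your route---patch $g=\sum_j\rho_j g_j$, correct by $u$ solving $\bar\partial u=\bar\partial g$, and invoke $\|u\|\le C\varepsilon$---is also viable, but the sentence ``Solving the scalar equations $\bar\partial u_{jk}=\bar\partial\rho_j\cdot a_{jk}$ coefficientwise \dots\ produces $u$ with $\|u\|\le C\varepsilon$'' is not justified as written: the individual $u_{jk}$ are not small, and a triangle inequality over $(j,k)$ does not see the cancellation. What actually gives the bound is applying the \emph{$B$-valued} operator $L_K$ of Theorem~\ref{dbar} to the $B$-valued form $\bar\partial g=\sum_j\bar\partial\rho_j\,(g_j-f)$, whose $\|\cdot\|_K$-norm is $O(\varepsilon)$; one then needs the easy remark (from the explicit integral formulas in Propositions~\ref{pr1}--\ref{pr2}) that on finite tensors the $B$-valued $L_K$ agrees with the coefficientwise scalar one, so that $u$ is again a finite tensor and $g-u\in H^\infty\otimes B$. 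Once that compatibility is stated, your argument goes through; the ``expected main obstacle'' you flag (Carleson-type weight bounds near $M_s$) is not actually an obstacle here, since the norm $\|\cdot\|_K$ in Theorem~\ref{dbar} is just a sup-norm in charts and the constants depend only on the fixed $\rho_j$.
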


In Section~9 we extend part of our results to holomorphic functions defined on certain Riemann surfaces, e.g., on Riemann surfaces of finite type.

\sect{Maximal Ideal Space of $H^\infty$}
In this section we collect some auxiliary results on the structure of the maximal ideal space of $H^\infty$.

\subsection{}
Recall that the pseudohyperbolic metric on $\Di$ is defined by
\[
\rho(z,w):=\left|\frac{z-w}{1-\bar w z}\right|,\qquad z,w\in\Di.
\]
For $x,y\in\mathcal M(H^\infty)$ the formula
\[
\rho(x,y):=\sup\{|\hat f(y)|\, ;\, f\in H^\infty,\, \hat f(x)=0,\, \|f\|\le 1\}
\]
gives an extension of $\rho$ to $M(H^\infty)$.
The {\em Gleason part} of $x\in\mathcal M$ is then defined by $\pi(x):=\{y\in M(H^\infty)\, ;\, \rho(x,y)<1\}$. For $x,y\in M(H^\infty)$ we have
$\pi(x)=\pi(y)$ or $\pi(x)\cap\pi(y)=\emptyset$. Hoffman's classification of Gleason parts \cite{H} shows that there are only two cases: either $\pi(x)=\{x\}$ or $\pi(x)$ is an analytic disk. The former case means that there is a continuous one-to-one and onto map $L_x:\Di\to\pi(x)$ such that
$\hat f\circ L_x\in H^\infty$ for every $f\in H^\infty$. Moreover, any analytic disk is contained in a Gleason part and any maximal (i.e., not contained in any other) analytic disk is a Gleason part. By $M_a$ and $M_s$ we denote the sets of all non-trivial (analytic disks) and trivial (one-pointed) Gleason parts, respectively. It is known that $M_a\subset M(H^\infty)$ is open. Hoffman proved that $\pi(x)\subset M_a$ if and only if $x$ belongs to the closure of some interpolating sequence in $\Di$. 

\subsection{Structure of $M_a$}
In \cite{Br1} $M_a$ is described as a fibre bundle over a compact Riemann surface. Specifically, let $G$ be the fundamental group of a compact Riemann surface $S$ of genus $\ge 2$. Let $\ell_\infty(G)$ be the Banach algebra of bounded complex-valued functions on $G$ with pointwise multiplication and supremum norm. By $\beta G$ we denote the {\em Stone-\v{C}ech compactification} of $G$, i.e., the maximal ideal space of $\ell_\infty(G)$ equipped with the Gelfand topology.
 
The universal covering $r:\Di\to S$ is a principal fibre bundle with fibre $G$. Namely, there exists a finite open cover ${\mathcal U} = (U_i)_{i\in I}$ of $S$ by sets biholomorphic to $\Di$ and a locally constant cocycle $\bar g=\{g_{ij}\}\in Z^1({\mathcal U}; G)$ such that $\Di$ is biholomorphic to the quotient space of the disjoint union $V=\sqcup_{i\in I}U_i\times G$ by the equivalence relation $U_i\times G\ni (x, g)\sim (x, gg_{ij})\in U_j\times G$. The identification space is a fibre bundle with projection $r:\Di\to S$ induced by projections $U_i\times G\to U_i$, see, e.g., \cite[Ch.~1]{Hi}.

Next, the right action of $G$ on itself by multiplications is extended to the right continuous action of $G$ on $\beta G$. Let $\tilde r: E(S,\beta G)\to S$ be the associated with this action bundle on $S$ with fibre $\beta G$ constructed by cocycle $\bar g$. Then $E(S,\beta G)$ is a {\em compact} Hausdorff space homeomorphic to the quotient space of the disjoint union $\widetilde V=\sqcup_{i\in I}U_i\times \beta G$ by the equivalence relation $U_i\times \beta G\ni (x, \xi)\sim (x, \xi g_{ij})\in U_j\times \beta G$. The projection $\tilde r:E(S,\beta G)\to S$ is induced by projections $U_i\times \beta G\to U_i$.
Note that there is a natural embedding $V\hookrightarrow\widetilde V$ induced by the embedding $G\hookrightarrow\beta G$. This embedding commutes
with the corresponding equivalence relations and so determines an embedding of $\Di$ into $E(S,\beta G)$ as an open dense subset.
Similarly, for each $\xi\in\beta G$ there exists a continuous injection  $V\to\widetilde V$ induced by the injection $G\to\beta G$, $g\mapsto\xi g$,
commuting with the corresponding equivalence relations. Thus it determines a continuous injective map $i_{\xi}:\Di\to E(S,\beta G)$.
Let $X_G:=\beta G/G$ be the set of co-sets with respect to the right action of $G$ on $\beta G$. Then $i_{\xi_1}(\Di)=i_{\xi_2}(\Di)$ if and only if $\xi_1$ and $\xi_2$ determine the same element of $X_G$.  If $\xi$ represents an element $x\in X_G$, then we write $i_x(\Di)$ instead of $i_{\xi}(\Di)$. In particular, $E(S,\beta G)=\sqcup_{x\in X_G}i_x(\Di)$.

Let $U\subset E(S,\beta G)$ be open. We say that a function $f\in C(U)$ is holomorphic if $f|_{U\cap\Di}$ is holomorphic in the usual sense. The set of holomorphic on $U$ functions is denoted by $\mathcal O(U)$. It was shown in \cite[Th.~2.1]{Br1} that each $h\in H^\infty(U\cap\Di)$ is extended to a unique holomorphic function $\hat h\in \mathcal O(U)$. In particular, the restriction map $\mathcal O(E(S,\beta G))\to H^\infty(\Di)$ is an isometry of Banach algebras. Thus the quotient space of $E(S,\beta G)$ (equipped with the factor topology) by the equivalence relation
$x\sim y \Leftrightarrow f(x)=f(y)$ for all $f\in \mathcal O(E(S,\beta G))$ is homeomorphic to $M(H^\infty)$. By $q$ we denote the quotient map 
$E(S,\beta G)\to M(H^\infty)$.

A sequence $\{g_n\}\subset G$ is said to be interpolating if $\{g_n(0)\}\subset\Di$ is interpolating for $H^\infty$ (here $G$ acts on $\Di$ by M\"{o}bius transformations). Let $G_{in}\subset\beta G$ be the union of closures of all interpolating sequences in $G$. It was shown that $G_{in}$ is an open dense subset of $\beta G$ invariant with respect to the right action of $G$.
The associated with this action bundle $E(S, G_{in})$ on $S$ with fibre $G_{in}$ constructed by the cocycle $\bar g\in Z^1(\mathcal U ;G)$ is an open dense subbundle of $E(S,\beta G)$ containing $\Di$. It was established in \cite{Br1} that 
$q$ maps $E(S, G_{in})$ homeomorphically onto $M_a$ so that for each $\xi\in G_{in}$ the set $q\bigl(i_{\xi}(\Di)\bigr)$ coincides with the Gleason part $\pi\bigl(q(i_\xi (0))\bigr)$.  Also, for distinct $x,y\in E(S,\beta G)$ with $x\in E(S, G_{in})$ there exists $f\in  \mathcal O(E(S,\beta G))$ such that $f(x)\ne f(y)$. Thus $q(x)=x$ for all $x\in E(S,G_{in})$, i.e.,  $E(S,G_{in})=M_a$.

It is worth noting that every bounded uniformly continuous (Lipschitz) with respect to the metric $\rho$ function $f$ on $\Di$ admits a continuous extension $\hat f$ to $E(S,\beta G)$ (and, in particular, to $M_a$) so that for every map $i_{\xi}:\Di\to E(S,\beta G)$ the function $\hat f\circ i_{\xi}$ is uniformly continuous (Lipschitz) with respect to $\rho$ on $\Di$.

From the definition of $E(S, \beta G)$ follows that for a simply connected open subset $U\subset S$ restriction $E(S, \beta G)|_U\, \bigl(:=\tilde r^{-1}(U)\bigr)$ is a trivial bundle, i.e., there exists an isomorphism of bundles (with fibre $\beta G$) 
$\varphi: E(S, \beta G)|_U\to U\times \beta G$, $\varphi(x):=(\tilde r(x),\tilde\varphi(x))$, $x\in E(S,\beta G)|_U$, mapping $\tilde r^{-1}(U)\cap\Di$ biholomorphically onto $U\times G$.
A subset $W\subset\tilde r^{-1}(U)$ of the form $R_{U,H}:=\varphi^{-1}(U\times H)$, $H\subset \beta G$, is called {\em rectangular}. The base of topology on $E(S, G_{in})\, (:=M_a)$ consists of rectangular sets $R_{U,H}$ with $U\subset S$ biholomorphic to $\Di$ and $H\subset G_{in}$ being the closure of an interpolating sequence in $G$ (so $H$ is a clopen subset of $\beta G$). 
Another base of topology on $M_a$ is given by sets of the form $\{x\in M_a\, ;\, |\hat B(x)|<\varepsilon\}$, where $B$ is an interpolating Blaschke product. This follows from the fact that for a sufficiently small $\varepsilon$ the set $B^{-1}(\Di_\varepsilon)\subset\Di$, $\Di_\varepsilon:=\{z\, ;\, |z|<\varepsilon\}$, is biholomorphic to $\Di_\varepsilon\times B^{-1}(0)$, see \cite[Ch.~X, Lm.~1.4]{Ga}. Hence,
$\{x\in M_a\, ;\, |\hat B(x)|<\varepsilon\}$ is biholomorphic to $\Di_\varepsilon\times \hat B^{-1}(0)$.
\subsection{Structure of $M_s$} It was proved in \cite{S2}, that the set $M_s$ of trivial Gleason parts is totally disconnected, i.e., ${\rm dim}\, M_s=0$ (because $M_s$ is compact). Moreover, $C(M_s)$ is the uniform closure of the algebra 
\[
A(M_s):=\left\{\frac{\hat f|_{M_s}}{\hat g|_{M_s}}\, ;\, f, g\in H^\infty\ \text{and}\ \hat g\ \text{never vanishes on}\ M_s\right\}.
\] 


%
\sect{$\bar\partial$-equations with Support in $M_a$}
\subsection{}
First, we develop differential calculus on $E(S,\beta G)$.

Let $X$ be a complex Banach space and $U\subset S$ be open simply connected. Let $\varphi: E(S,\beta G)|_U\to U\times \beta G$ be a trivialization as in section 2.2. We say that a function $f\in C(E(S,\beta G)|_U;X)$ belongs to the space $C^k(E(S,\beta G)|_U;X)$, $k\in\N\cup\{\infty\}$, if its pullback to $U\times \beta G$ by $\varphi^{-1}$ is of class $C^k$. In turn, a continuous $X$-valued function on $U\times \beta G$ is of class $C^k$ if regarded as a Banach-valued map $U\to C(\beta G;X)$ it has continuous derivatives of order $\le k$ (in local coordinates on $U$).

\begin{Lm}\label{le1}
The above definition does not depend on the choice of $\varphi$.
\end{Lm}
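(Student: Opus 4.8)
The plan is to reduce the statement to an explicit description of the transition map between two admissible trivializations. Let $\varphi_1,\varphi_2\colon E(S,\beta G)|_U\to U\times\beta G$ be two trivializations as in section~2.2 and set $\Phi:=\varphi_2\circ\varphi_1^{-1}$, a homeomorphism of $U\times\beta G$ onto itself. By definition $\varphi_i(x)=(\tilde r(x),\tilde\varphi_i(x))$, so both $\varphi_i$ carry the bundle projection to $\mathrm{pr}_U\colon U\times\beta G\to U$; hence $\Phi$ commutes with $\mathrm{pr}_U$, that is, $\Phi(x,\xi)=(x,\Psi(x,\xi))$ for a continuous map $\Psi\colon U\times\beta G\to\beta G$ restricting to a homeomorphism $\Psi(x,\cdot)$ of $\beta G$ for each fixed $x$. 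Also by definition each $\varphi_i$ maps $\tilde r^{-1}(U)\cap\Di$ onto $U\times G$, so $\Phi$ maps $U\times G$ onto $U\times G$.

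First I would establish the rigidity of $\Phi$ in the fibre direction. Since $G$ is discrete, $U\times G=\bigsqcup_{g\in G}U\times\{g\}$ is a disjoint union of connected clopen subsets, and $\Phi$ restricts to a homeomorphism of $U\times G$ commuting with $\mathrm{pr}_U$; therefore for each $g\in G$ the connected set $\Phi(U\times\{g\})$ lies in a single sheet $U\times\{\sigma(g)\}$, and since $\Phi$ is the identity in the $U$-coordinate, $\Phi(x,g)=(x,\sigma(g))$ for all $x\in U$. Applying the same to $\Phi^{-1}$ shows $\sigma\colon G\to G$ is a bijection. Consequently $\Psi(x,\cdot)$ and the unique continuous extension $\bar\sigma\colon\beta G\to\beta G$ of $\sigma$ — which exists by the universal property of $\beta G$ and is a homeomorphism with inverse $\overline{\sigma^{-1}}$ — are continuous maps of $\beta G$ that agree on the dense subset $G$, hence coincide. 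Thus $\Phi(x,\xi)=(x,\bar\sigma(\xi))$, independently of $x$.

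Given this, the lemma is immediate. For $f\in C(U\times\beta G;X)$ we have $(f\circ\Phi)(x,\xi)=f(x,\bar\sigma(\xi))$, so, viewed as $C(\beta G;X)$-valued maps on $U$, one has $x\mapsto (f\circ\Phi)(x,\cdot)=T\bigl(x\mapsto f(x,\cdot)\bigr)$, where $T\colon C(\beta G;X)\to C(\beta G;X)$, $Th:=h\circ\bar\sigma$, is a linear isometric isomorphism with inverse $h\mapsto h\circ\overline{\sigma^{-1}}$. Postcomposition with a bounded linear operator commutes with differentiation, $D^{j}(T\circ F)=T\circ D^{j}F$, and the same applies to $T^{-1}$; hence the map $x\mapsto f(x,\cdot)$ has continuous derivatives of order $\le k$ in local coordinates on $U$ if and only if $x\mapsto (f\circ\Phi)(x,\cdot)$ does. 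Therefore $f\in C^k(U\times\beta G;X)$ if and only if $f\circ\Phi\in C^k(U\times\beta G;X)$; since the pullbacks of a given function on $E(S,\beta G)|_U$ by $\varphi_1^{-1}$ and by $\varphi_2^{-1}$ differ exactly by $\Phi$, this is the asserted independence of the choice of $\varphi$.

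The only step using the particular geometry of the situation — and therefore the main point — is the rigidity claim $\Psi(x,\xi)=\bar\sigma(\xi)$: it rests on the fact that admissible trivializations carry the holomorphic part $\tilde r^{-1}(U)\cap\Di$ onto $U\times G$, combined with the discreteness of $G$ and the density of $G$ in $\beta G$. Without this constraint the fibre component of a transition map could vary with $x$ through nontrivial homeomorphisms of $\beta G$, and the resulting notion of $C^k$ would genuinely depend on $\varphi$. Everything else is the routine observation that a fixed Banach-space isomorphism applied in the $C(\beta G;X)$ factor preserves $C^k$-regularity in the remaining (real two-dimensional) variable.
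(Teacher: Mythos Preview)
Your proof is correct, and its overall shape matches the paper's: both show that the transition map has the form $(x,\xi)\mapsto(x,\eta(\xi))$ for a fixed homeomorphism $\eta$ of $\beta G$, and then observe that postcomposition with the induced linear isomorphism of $C(\beta G;X)$ preserves $C^k$-regularity in the $U$-variable.

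The one genuine difference is in how you prove the rigidity step. You route it through the biholomorphicity condition: the trivializations send $\tilde r^{-1}(U)\cap\Di$ onto $U\times G$, so the transition map preserves $U\times G$; discreteness of $G$ and connectedness of $U$ then pin down $\Psi$ on $U\times G$, and density of $G$ in $\beta G$ pushes this to all of $\beta G$. The paper bypasses all of this and argues directly: for each fixed $\xi$, the map $z\mapsto\tilde\psi(z,\xi)$ is a continuous map from the connected set $U$ into the totally disconnected space $\beta G$, hence constant. This is shorter and, more to the point, shows that your closing remark is not quite right: the independence of $\Psi$ from $x$ does \emph{not} rely on the compatibility with $\Di$ or on the density of $G$; total disconnectedness of $\beta G$ alone forces it. So while your argument is valid, the feature you single out as ``the main point'' is actually not needed for this lemma.
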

\begin{proof}
Suppose $\varphi': E(S, \beta G)|_U\to U\times \beta G$, $\varphi'(x):=(\tilde r(x),\tilde\varphi'(x))$, $x\in E(S,\beta G)|_U$, is another trivialization. Then $\psi:=\varphi'\circ\varphi^{-1}: U\times \beta G\to U\times \beta G$ is a homeomorphism of the form
$\psi(z,\xi)=(z,\tilde\psi(z,\xi))$, $(z,\xi)\in U\times \beta G$. Since $U$ is connected and $\beta G$ is totally disconnected, the continuous map $\tilde\psi:U\times \beta G\to \beta G$ does not depend on the first coordinate, i.e., $\tilde\psi(z,\xi):=\eta(\xi)$ for a homeomorphism $\eta: \beta G\to \beta G$ such that $\eta(G)=G$. To prove the lemma it suffices to show that if $f$ is of class $C^k$ on $U\times \beta G$, then $f\circ\psi$ is of class $C^k$ on $U\times \beta G$ as well. But $(f\circ\psi)(z,\xi):=f(z,\eta(\xi))$ and $\eta$ induces a linear isomorphism of the Banach space $C(\beta G;X)$. Therefore $f\circ\psi$ regarded as a map $U\to C(\beta G,X)$ is the composite of this isomorphism and
the map $f:U\to C(\beta; X)$. Hence, $f\circ\psi$ is of class $C^k$ on $U\times\beta G$.
\end{proof}

For a rectangular set $R_{U,H}\subset E(S,\beta G)|_U$ with clopen $H$ a function $f$ on $R_{U,H}$ is said to belong to the space $C^k(R_{U,H};X)$ if its extension to $E(S,\beta G)|_U$ by $0$ belongs to $C^k(E(S,\beta G)|_U;X)$. 

For an open $V\subset E(S;\beta G)$ a continuous function $f$ on $V$ belongs to the space $C^k(V;X)$ if its restriction to each
$R_{U,H}\subset V$  with $H$ clopen belongs to $C^k(R_{U,H};X)$. 

In the proofs we use the following result.
\begin{Proposition}\label{partition}
For a finite open cover of $E(S;\beta G)$ there exists a $C^\infty$ partition of unity subordinate to it.
\end{Proposition}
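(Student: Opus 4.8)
The plan is to build the $C^\infty$ partition of unity on $E(S,\beta G)$ by pulling back an ordinary smooth partition of unity on the compact Riemann surface $S$ and then patching in the $\beta G$-direction, where the topology is totally disconnected and hence partitions of unity are easy to produce. Concretely, suppose $\{W_1,\dots,W_N\}$ is the given finite open cover of $E(S,\beta G)$. First I would pass to a refinement adapted to the bundle structure: since $\tilde r\colon E(S,\beta G)\to S$ is a fibre bundle with totally disconnected fibre $\beta G$, every point of $E(S,\beta G)$ has a neighbourhood basis of rectangular sets $R_{U,H}=\varphi^{-1}(U\times H)$ with $U\subset S$ a coordinate disk and $H\subset\beta G$ clopen. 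By compactness of $E(S,\beta G)$ I can therefore choose finitely many rectangular sets $R_{U_\alpha,H_\alpha}$, $\alpha=1,\dots,m$, refining $\{W_j\}$, and I may shrink the $U_\alpha$ slightly so that each $\overline{R_{U_\alpha,H'_\alpha}}$ (with $H'_\alpha=H_\alpha$, which is also closed) still refines the cover and the smaller sets $R_{U'_\alpha,H_\alpha}$ still cover, where $U'_\alpha\Subset U_\alpha$.

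The key construction is now a product one. On the base $S$, which is a smooth compact manifold, choose a smooth partition of unity $\{\chi_\alpha\}$ on $S$ with $\mathrm{supp}\,\chi_\alpha\subset U_\alpha$ and $\sum_\alpha\chi_\alpha\equiv 1$ on a neighbourhood of $\tilde r(\mathrm{supp}\text{ of the cover})$ — in fact $\sum_\alpha\chi_\alpha\equiv 1$ on all of $S$ can be arranged after adding finitely many more coordinate disks disjoint from where we need support control, but it is cleanest just to take $\{\chi_\alpha\}$ subordinate to $\{U_\alpha\}$ with $\sum\chi_\alpha\equiv 1$ on $S$. In the fibre direction, since each $H_\alpha$ is clopen in $\beta G$, its indicator function $\mathbf 1_{H_\alpha}$ is continuous on $\beta G$; more carefully, I want a subordinate "partition of unity" $\{\lambda_\alpha\}$ of clopen-supported continuous functions on $\beta G$ with $\lambda_\alpha$ supported in $H_\alpha$ and $\sum_\alpha\lambda_\alpha\equiv 1$ on $\beta G$ (over those indices relevant to a given point of $S$) — this exists because a finite cover of the compact totally disconnected space $\beta G$ by clopen sets can be disjointified into a clopen partition, and indicator functions of the pieces are continuous and can be regrouped. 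Then set $\rho_\alpha:=\varphi_\alpha^*\bigl(\chi_\alpha\otimes\lambda_\alpha\bigr)$ on $R_{U_\alpha,H_\alpha}=\varphi_\alpha^{-1}(U_\alpha\times\beta G|_{H_\alpha})$, extended by zero to all of $E(S,\beta G)$; the extension is continuous because $\chi_\alpha$ vanishes near $\partial U_\alpha$. By the definition of $C^k$ on rectangular sets given just before the Proposition — regarding a function on $U_\alpha\times\beta G$ as a map $U_\alpha\to C(\beta G;X)$ with $X=\Co$ — the function $\chi_\alpha\otimes\lambda_\alpha$ is visibly $C^\infty$, since as a map $U_\alpha\to C(\beta G)$ it is $z\mapsto\chi_\alpha(z)\lambda_\alpha$, a smooth scalar function times a fixed vector. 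Hence each $\rho_\alpha\in C^\infty(E(S,\beta G))$. Finally, after arranging that $\sum_\alpha\lambda_\alpha\equiv 1$ globally (independently of the base point) one gets $\sum_\alpha\rho_\alpha\equiv\sum_\alpha\chi_\alpha\cdot 1\equiv 1$; grouping the $\rho_\alpha$ according to a choice of $j(\alpha)$ with $R_{U_\alpha,H_\alpha}\subset W_{j(\alpha)}$ produces the desired partition of unity $\{\psi_j\}$ subordinate to $\{W_j\}$.

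The main obstacle, and the only point requiring care, is the fibrewise normalization: one must choose the clopen functions $\lambda_\alpha$ so that $\sum_\alpha\lambda_\alpha\equiv 1$ on $\beta G$ \emph{uniformly}, i.e.\ with the \emph{same} $\lambda_\alpha$ used over every point of $U_\alpha$, so that the Banach-valued map $U_\alpha\to C(\beta G)$ stays smooth and the sum telescopes to $1$. This is where one uses crucially that over a simply connected (in particular connected) $U_\alpha$ the trivializations are rigid in the fibre direction — Lemma~\ref{le1}'s proof shows the transition maps $\tilde\psi(z,\xi)=\eta(\xi)$ are independent of $z$ — so clopen decompositions of $\beta G$ glue consistently across overlaps. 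Once a common clopen refinement $\{D_1,\dots,D_p\}$ of the $\{H_\alpha\}$ is fixed and each $D_\beta$ is assigned to one index $\alpha(\beta)$ with $D_\beta\subset H_{\alpha(\beta)}$, setting $\lambda_\alpha:=\sum_{\beta:\,\alpha(\beta)=\alpha}\mathbf 1_{D_\beta}$ does the job, and the remaining verifications — continuity of the zero-extension, the $C^\infty$ property via the product structure, subordination — are routine.
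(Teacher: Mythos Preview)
There is a genuine gap in your normalization step. You assert that once $\sum_\alpha\chi_\alpha\equiv 1$ on $S$ and $\sum_\alpha\lambda_\alpha\equiv 1$ on $\beta G$, one gets
\[
\sum_\alpha\rho_\alpha\;=\;\sum_\alpha\chi_\alpha\lambda_\alpha\;=\;\Bigl(\sum_\alpha\chi_\alpha\Bigr)\cdot 1\;=\;1.
\]
But this identity is false: the index $\alpha$ is \emph{coupled} across the two factors, so the sum does not split. Concretely, with two indices and $\chi_1=\chi_2=\tfrac12$, $\lambda_1=\mathbf 1_{D}$, $\lambda_2=\mathbf 1_{D^c}$ (a clopen partition), one has $\sum_\alpha\chi_\alpha\lambda_\alpha=\tfrac12$ everywhere, not $1$. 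In your last paragraph, after disjointifying to $\{D_\beta\}$ and assigning each $D_\beta$ to a single $\alpha(\beta)$, the same issue persists: for $\xi\in D_{\beta_0}$ one gets $\sum_\alpha\chi_\alpha(z)\lambda_\alpha(\xi)=\chi_{\alpha(\beta_0)}(z)$, which is generically less than $1$. The underlying reason is structural: the rectangles $R_{U_\alpha,H_\alpha}$ covering $E(S,\beta G)$ only guarantee that, for each fixed $z\in S$, the sets $\{H_\alpha:\,z\in U_\alpha\}$ cover $\beta G$; which $\alpha$'s are ``relevant'' varies with $z$, so a single base-independent fibre partition $\{\lambda_\alpha\}$ cannot make the coupled sum equal $1$.

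The paper avoids this difficulty altogether. It first proves a bump-function lemma: for any open $U$ and compact $K\subset U$ there is a nonnegative $C^\infty$ function $\psi$ with $\psi|_K>0$ and ${\rm supp}\,\psi\subset U$, built as a finite sum of products $\rho_i\cdot\chi_i$ where $\rho_i$ is the $\tilde r$-pullback of a smooth bump on $S$ and $\chi_i$ is the characteristic function of a rectangular set with clopen fibre (these products are $C^\infty$ by exactly your observation that $z\mapsto\chi(z)\lambda$ is a smooth $C(\beta G)$-valued map). Given a finite cover $(U_i)$, one then shrinks to $(W_i)$ with $\bar W_i\subset U_i$, produces bumps $\psi_i$ with $\psi_i|_{W_i}>0$ and ${\rm supp}\,\psi_i\subset U_i$, and sets $\varphi_i:=\psi_i/\sum_j\psi_j$. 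The division absorbs the normalization problem entirely; your product Ansatz can be salvaged in the same way, i.e.\ keep your $\rho_\alpha=\chi_\alpha\otimes\mathbf 1_{H_\alpha}$ (no need for the $\lambda_\alpha$ at all), check that $\sum_\alpha\rho_\alpha>0$ everywhere (this follows since the rectangles cover and one can arrange $\chi_\alpha>0$ on the smaller $U_\alpha'$), and then normalize.
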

\begin{proof}
A rectangular set $R_{U,H}\subset E(S,\beta G)$ with $U$ biholomorphic to $\Di$ and $H$ clopen will be called a {\em coordinate chart}. 
\begin{Lm}\label{le4.1}
Let $U\subset E(S,\beta G)$ be open and $K\subset U$ be a compact subset. Then there exists a nonnegative $C^\infty$ function $\psi$ on $E(S,\beta G)$ such that $\psi|_K>0$ and ${\rm supp}\,\psi\subset U$.
\end{Lm}
\begin{proof}
We cover $K$ by two families of coordinate charts $(R_{U_i,H_i})_{1\le i\le k}$, $(R_{U_i',H_i})_{1\le i\le k}$ such that 
$U_i\Subset U_i'$ and $R_{U_i',H_i}\Subset U$ for all $i$. Consider the nonnegative $C^\infty$ function $\psi_i:=\rho_i\cdot\chi_i$ on $E(S,\beta G)|$, where $\rho_i$ is the pullback by $\tilde r$ of a nonnegative $C^\infty$ function on $S$ equals $1$ on $U_i$ and having support in $U_i'$ and $\chi_i$ is the characteristic function of $R_{U_i',H'}$. Then $\psi_i|_{R_{U_i,H_i}}=1$ and ${\rm supp}\,\psi_i\subset R_{U_i',H_i}$. The function
$\psi:=\sum_{i=1}^k\psi_i$ satisfies the required properties.
\end{proof}
Now, assume that ${\mathcal U}=(U_i)_{1\le i\le k}$ is a finite open cover of the compact Hausdorff space $E(S,\beta G)$. Then there exists a finite open refinement $(W_i)_{1\le i\le k}$ of ${\mathcal U}$ such that $\bar W_i\subset U_i$ for all $i$. By Lemma \ref{le4.1} there is a nonnegative $C^\infty$ function $\psi_i$ on $E(S,\beta G)$ such that $\psi_i|_{W_i}>0$ and ${\rm supp}\,\psi_i\subset U_i$. We set
\[
\varphi_i:=\frac{\psi_i}{\sum_{j=1}^k \psi_j}. 
\]
Then $\{\varphi_i\}_{1\le i\le k}$ is a $C^\infty$ partition of unity subordinate to $\mathcal U$.
\end{proof}
\begin{C}\label{cutoff1}
Let $U\Subset V\subset E(S,\beta G)$ be open. Then there exists a nonnegative $C^\infty$ function $\rho$ on $E(S,\beta G)$ such that $\rho=1$ in an open neighbourhood of $\bar U$ and ${\rm supp}\,\rho\subset V$.
\end{C}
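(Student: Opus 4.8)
The plan is to obtain $\rho$ directly from Proposition~\ref{partition}. Since $U\Subset V$, the closure $\bar U$ is a compact, hence closed, subset of the compact Hausdorff space $E(S,\beta G)$, and it is contained in $V$; consequently $E(S,\beta G)\setminus\bar U$ is open, and the two sets $V$ and $E(S,\beta G)\setminus\bar U$ form a finite open cover $\mathcal U$ of $E(S,\beta G)$ (it is a cover precisely because $\bar U\subset V$). I would apply Proposition~\ref{partition} to $\mathcal U$ to obtain a $C^\infty$ partition of unity $\{\varphi_1,\varphi_2\}$ subordinate to it, so that $\varphi_1,\varphi_2\ge 0$, $\varphi_1+\varphi_2\equiv 1$ on $E(S,\beta G)$, ${\rm supp}\,\varphi_1\subset V$ and ${\rm supp}\,\varphi_2\subset E(S,\beta G)\setminus\bar U$.

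Then I would set $\rho:=\varphi_1$. By construction $\rho$ is a nonnegative $C^\infty$ function on $E(S,\beta G)$ with ${\rm supp}\,\rho\subset V$. Moreover $W:=E(S,\beta G)\setminus{\rm supp}\,\varphi_2$ is open (being the complement of a closed set) and contains $\bar U$, and on $W$ we have $\varphi_2\equiv 0$, whence $\rho=\varphi_1\equiv 1$ on $W$. Thus $\rho=1$ on the open neighbourhood $W$ of $\bar U$ and ${\rm supp}\,\rho\subset V$, as required.

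I do not expect a genuine obstacle here: the corollary is a formal consequence of the existence of finite $C^\infty$ partitions of unity established in Proposition~\ref{partition}. The one point deserving a moment's attention is that we need $\rho\equiv 1$ on an \emph{open neighbourhood} of $\bar U$, not merely $\rho|_{\bar U}=1$; this is exactly why one passes to the open set $E(S,\beta G)\setminus{\rm supp}\,\varphi_2$ rather than just observing that $\varphi_2$ vanishes on $\bar U$. An alternative route is to produce, via Lemma~\ref{le4.1}, a nonnegative $\psi\in C^\infty(E(S,\beta G))$ with $\psi|_{\bar U}>0$ and ${\rm supp}\,\psi\subset V$, and then compose $\psi$ with a scalar $C^\infty$ cut-off $\chi\colon[0,\infty)\to[0,1]$ vanishing near $0$ and equal to $1$ on $[\min_{\bar U}\psi,\infty)$; this works as well, but it forces one to check that the superposition $g\mapsto\chi\circ g$ is smooth on $C(\beta G)$, so the partition-of-unity argument above is the cleaner one.
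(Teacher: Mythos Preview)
Your argument is correct and is essentially identical to the paper's own proof: the paper also applies Proposition~\ref{partition} to the two-set cover $U_1:=V$, $U_2:=E(S,\beta G)\setminus\bar U$ and sets $\rho:=\varphi_1$. Your added observation that $\rho\equiv 1$ on the open set $E(S,\beta G)\setminus{\rm supp}\,\varphi_2$ is exactly the detail one needs, which the paper leaves implicit.
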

\begin{proof}
Let $\{\varphi_i\}_{i=1,2}$ be a $C^\infty$ partition of unity subordinate to the cover $(U_i)_{i=1,2}$ of $E(S,\beta G)$, where $U_1:=V$, $U_2:=E(S,\beta G)\setminus\bar U$. Then $\rho:=\varphi_1$ is as required.
\end{proof}
An $X$-valued $(0,1)$-form $\omega$ of class $C^k$ on an open $U\subset E(S,\beta G)$ is defined in each coordinate chart $R_{V,H}\subset U$ with local coordinates $(z,\xi)$ (pulled back from $V\times\beta G$ by $\varphi$) by the formula $\omega|_{R_{V,H}}:=f(z,\xi)d\bar z$, $f\in C^k(R_{V, H};X)$, so that the restriction of the family $\{\omega|_{R_{V,H}}\,;\,R_{V,H}\subset U\}$ to $\Di$ determines a global $X$-valued $(0,1)$-form of class $C^k$ on the open set $U\cap\Di\subset\Di$. (If $U=E(S,\beta G)$, then such $\omega$ can be equivalently defined as a $C^k$ $(0,1)$-form on $S$ with values in the Banach holomorphic vector bundle $E_X$ on $S$ with fibre $C(\beta G;X)$ associated with the action of $G$ on $C(\beta G;X)$: $g\in G$ maps $h(x)\in C(\beta G;X)$ into $h(xg)$.) 

By $\mathcal E ^{0,1}(U; X)$ we denote the space of $X$-valued $(0,1)$-forms on $U\subset E(S,\beta G)$. The operator $\bar\partial: C^\infty(U;X)\to \mathcal E ^{0,1}(U; X)$ is defined in each $R_{V,H}\subset U$ equipped with the local coordinates $(z,\xi)$ as $\bar\partial f(z,\xi):=\frac{\partial f}{\partial\bar z}(z,\xi)d\bar z$. Then the composite of the restriction map to $U\cap\Di$ with this operator coincides with the standard $\bar\partial$ operator defined on $C^\infty(U\cap\Di;X)$.
(For $U=E(S,\beta G)$, identifying $C^\infty(E(S,\beta G);X)$ and $\mathcal E ^{0,1}(E(S,\beta G); X)$ with spaces of $C^\infty$ sections of the bundle $E_X$ and of $C^\infty$ $(0,1)$ forms with values in the fibres of this bundle, we obtain that $\bar\partial$ is the standard operator between these spaces.)

It is easy to check, using Cauchy estimates for derivatives of families of uniformly bounded holomorphic functions on $\Di$, that if $f\in {\mathcal O}(U; X)$, $U\subset E(S,\beta G)$, then $f\in C^\infty(U; X)$ and in each $R_{V,H}\subset U$ with local coordinates $(z,\xi)$ the function $f(z,\xi)$ is holomorphic in $z$. Thus $\bar\partial f=0$.

By $\mathcal E_{\rm comp}^{0,1}(M_a; X)$ we denote the class of $X$-valued $C^\infty$ (0,1)-forms on $E(S,\beta G)$ with compact supports in $M_a:=E(S,G_{in})$, i.e., $\omega\in \mathcal E_{\rm comp}^{0,1}(M_a; X)$ if there is a compact subset of $M_a$ such that in each local coordinate representation $\omega=f d\bar z$ support of $f$ belongs to it. By ${\rm supp}\,\omega$ we denote the minimal set satisfying this property. Let $\mathcal E_K^{0,1}(X)\subset \mathcal E_{\rm comp}^{0,1}(M_a; X)$ be the subspace of forms with supports in the compact set
$K\Subset M_a$.

Proofs of our main results are based on the following
\begin{Th}\label{dbar}
There exist a norm $\|\cdot\|_K$ on $\mathcal E_K^{0,1}(X)$ and a continuous linear operator $L_K: \bigl(\mathcal E_K^{0,1}(X),\|\cdot\|_K\bigr)\to \bigl(C(M(H^\infty);X),\sup_{M(H^\infty)}\|\cdot\|_X\bigr)$  such that
for each $\omega\in \mathcal E_{K}^{0,1}(X)$ 
\begin{itemize}
\item[(a)]
$L_K(\omega)|_{M_a}\in C^\infty(M_a;X)$\quad and\quad $\bar\partial(L_K(\omega)|_{M_a})=\omega$;
\item[(b)]
$L_K(\omega)|_{M(H^\infty)\setminus K}\in\mathcal O(M(H^\infty)\setminus K;X)$.
\end{itemize}
\end{Th}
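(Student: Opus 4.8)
\emph{Proof idea.}
The plan is to solve the equation first on $\Di$ itself, by means of an explicit, linear, $L^\infty$-bounded solution operator of P.~Jones type --- this is the point at which the argument departs from the classical $L^1$--$L^\infty$ duality method, which breaks down when the target $X$ is infinite dimensional --- and then to transplant the solution to $M(H^\infty)$ through the covering space $E(S,\beta G)$ of $M_a$. Fix $K\Subset M_a$. Since the sets $\{x\in M_a\,;\,|\hat B(x)|<\varepsilon\}$, $B$ an interpolating Blaschke product, form a base of topology of $M_a$, compactness of $K$ yields a finite subcover $K\subset\bigcup_{j=1}^N\{|\hat B_j|<\varepsilon_j\}$ with each $\varepsilon_j$ so small that $\{|B_j|<\varepsilon_j\}\cap\Di$ is a disjoint union of sets, each contained in a pseudohyperbolic ball $B(z^{(j)}_n,r)$ about a zero $z^{(j)}_n$ of $B_j$, for a fixed $r<1$ (see \cite[Ch.~X, Lm.~1.4]{Ga}). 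For $\omega\in\mathcal E^{0,1}_K(X)$ write $\omega|_\Di=f_\omega\,d\bar z$. In the holomorphic coordinate $w=\hat B_j$ on $\{|\hat B_j|<\varepsilon_j\}$ one has $\omega=h(w,\cdot)\,d\bar w$ with $h$ bounded, hence on each piece $f_\omega(z)=h(B_j(z),\cdot)\,\overline{B_j'(z)}$ and, by the Schwarz--Pick inequality, $\|f_\omega(z)\|_X\le\|h\|_\infty(1-|z|^2)^{-1}$ on ${\rm supp}\,f_\omega\subset K\cap\Di\subset\bigcup_{j,n}B(z^{(j)}_n,r)$. Define $\|\omega\|_K:=\sup_{z\in\Di}(1-|z|^2)\|f_\omega(z)\|_X$; this is a finite norm on $\mathcal E^{0,1}_K(X)$ (it vanishes only for $\omega=0$, by density of $\Di$ in $M_a$). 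Then $|f_\omega|\,dA\le C\|\omega\|_K\,\nu_0$, where $\nu_0:=\sum_{j,n}(1-|z^{(j)}_n|^2)^{-1}\mathbf 1_{B(z^{(j)}_n,r)}\,dA$ is a \emph{fixed} Carleson measure (a finite sum of Carleson measures of interpolating sequences), whose Carleson norm depends only on $K$.

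Next, from Jones' explicit construction of bounded $\bar\partial$-solutions for Carleson data (cf.\ \cite[Ch.~VIII.2]{Ga}) one extracts a scalar kernel $\mathcal K(s,z)$ depending only on $\nu_0$ such that, for every $X$-valued density $g$ with $|g|\,dA\le C\nu_0$, the function $(\mathcal Tg)(z):=\frac1\pi\int_\Di\mathcal K(s,z)g(s)\,dA(s)$ is a bounded $X$-valued function on $\Di$ with $\bar\partial(\mathcal Tg)=g\,d\bar z$ and $\|\mathcal Tg\|_{L^\infty(\Di;X)}\le C'\|\nu_0\|_{\rm Car}\sup_s(1-|s|^2)\|g(s)\|_X$. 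The operator $\mathcal T$ is linear and, the kernel being scalar, the passage from scalar to Banach-valued densities is automatic --- this replaces the duality argument. The decisive additional property, read off from the kernel estimates together with the fact that ${\rm supp}\,\nu_0$ is a union of pseudohyperbolic balls of radius $\le r<1$ sitting over interpolating sequences, is that $\mathcal Tg$ is bounded and uniformly continuous with respect to the pseudohyperbolic metric $\rho$ on $\Di$, with constants depending only on $K$. Put $L_K(\omega):=\mathcal Tf_\omega$; on $\Di$ it satisfies $\bar\partial L_K(\omega)=\omega$ and is holomorphic off $K$, and $\|L_K(\omega)\|_{L^\infty(\Di;X)}\le C''\|\omega\|_K$.

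It remains to transplant $L_K(\omega)$ to $M(H^\infty)$. Being bounded and $\rho$-uniformly continuous on $\Di$, it extends to a function in $C(E(S,\beta G);X)$ by the extension property recorded in Section~2.2; being holomorphic on the dense subset $\Di\setminus K$ of the open set $E(S,\beta G)\setminus K$, the extension lies in $\mathcal O(E(S,\beta G)\setminus K;X)$ by the very definition of holomorphy on open subsets of $E(S,\beta G)$ (cf.\ also \cite[Th.~2.1]{Br1}). Holomorphic functions are constant on the fibres of the quotient map $q\colon E(S,\beta G)\to M(H^\infty)$ (since $\mathcal O(E(S,\beta G))\cong\mathcal O(M(H^\infty))$), while $q$ is injective on $M_a\supset K$; hence $L_K(\omega)$ descends to a function in $C(M(H^\infty);X)$ holomorphic on $M(H^\infty)\setminus K$, which gives (b), and the $L^\infty$-estimate and linearity descend with it, yielding the asserted continuous linear operator $L_K$. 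For (a), on $M_a\setminus K$ the function $L_K(\omega)$ is holomorphic, hence $C^\infty$; near $K$ its $C^\infty$-smoothness on $M_a$ (in the sense defined above) follows from interior regularity of the transform $\mathcal T$ applied fibrewise and from smooth dependence of the construction on the base variable of $E(S,\beta G)\to S$ (a partition of unity from Proposition~\ref{partition} reducing this to a single coordinate chart); and $\bar\partial(L_K(\omega)|_{M_a})=\omega$ holds on the dense subset $\Di$ of $M_a$ by construction, hence everywhere on $M_a$ by continuity of both sides.

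I expect the main obstacle to be the claim in the second paragraph: constructing a solution operator that is simultaneously linear, bounded from $L^\infty$-type Carleson data into $L^\infty(\Di;X)$ for an \emph{arbitrary} Banach space $X$ (which is precisely why the duality method must be replaced by an explicit kernel), and uniformly continuous with respect to $\rho$. It is this last property that turns a mere bound on $\Di$ into genuine continuity on the corona $M(H^\infty)$, and establishing it with every constant controlled by $K$ alone is the delicate technical point.
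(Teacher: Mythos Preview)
Your route differs substantially from the paper's. Rather than invoking a Jones-type kernel on all of $\Di$, the paper first reduces (via a $C^\infty$ partition of unity on $E(S,\beta G)$) to the case $K\subset R_{U,H}\subset\{|\hat B|<\delta/2\}$ for a \emph{single} interpolating Blaschke product $B$, and then solves by the change of variable $w=B(z)$: writing $\omega|_\Di$ in the $(w,j)$-coordinates on $\Di_{\delta/2}\times\N$ as $\tilde f\,d\bar w$, it uses an interpolation operator $S$ (built from functions solving the moving interpolation problem for $\{b_j(w)\}$, Proposition~\ref{prop3.2}) to lift $\tilde f$ to a function on $\Di\times\Di_{\delta/2}$ holomorphic in the first variable, applies the ordinary planar Cauchy transform $I$ in the second variable, and sets $G(\omega)(z):=I(S(\tilde f))(z,B(z))$. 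The payoff is not merely an $L^\infty$ bound: on $\{|B|>\delta/2\}$ the function $w\mapsto I(S(\tilde f))(z,w)$ is bounded holomorphic on an annulus, so $G(\omega)$ is there a \emph{uniform limit of functions $\sum_{i=0}^N h_iB^{-i}$ with $h_i\in H^\infty_{\rm comp}(\Di;X)$}, each of which extends to $M(H^\infty)$ by the Gelfand transform. Near $K$ the extension to $M_a$ and the $C^\infty$-smoothness are obtained by comparing $G(\omega)$ with a local Cauchy-integral solution $H$ on a coordinate chart: the difference is bounded holomorphic, hence extends (via \cite[Th.~2.1]{Br1} plus a weak$^*$/compactness argument) and is automatically smooth. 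Thus the extension to $M(H^\infty)$ is produced \emph{by formula}, not by an abstract continuity principle.

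The gap in your proposal is exactly the one you flag, and it is genuine. The Jones kernel has a Cauchy-type singularity at $\sigma=z$, and since ${\rm supp}\,f_\omega\subset\bigcup_{j,n}B(z_n^{(j)},r)$ accumulates at $\partial\Di$, a bound on $\int|\mathcal K(s,\sigma_1)-\mathcal K(s,\sigma_2)|\,d\nu_0(s)$ uniform in the $\rho$-location of $\sigma_1,\sigma_2$ is not a consequence of the standard estimates and is not, to my knowledge, in the literature; you would have to prove it from scratch. Without it you have neither a continuous extension to $E(S,\beta G)$ nor, for infinite-dimensional $X$, a relatively compact image (boundedness alone does not give this). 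Your descent step is also too quick: that \emph{global} elements of $\mathcal O(E(S,\beta G))$ are constant on fibres of $q$ does not by itself force a function holomorphic only on $E(S,\beta G)\setminus K$ to descend. The correct patch is that $L_K(\omega)|_{\Di\setminus K}\in H^\infty_{\rm comp}(\Di\setminus K;X)$ extends uniquely to $\mathcal O(M(H^\infty)\setminus K;X)$ by Proposition~\ref{sheaf}, and the pullback of that extension coincides with your $E(S,\beta G)$-extension by density of $\Di$ --- but this again presupposes you already have continuity on $E(S,\beta G)$ and a relatively compact image, i.e.\ it presupposes the very $\rho$-uniform-continuity claim you have not established. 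The paper's construction was designed precisely to avoid this circle.
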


\subsection{} 
In the proof of Theorem \ref{dbar} we use the following auxiliary results.

Let $B(z):=\prod_{j\ge 1}\frac{z-z_j}{1-\bar z_j z}$, $z\in\Di$, be an interpolating Blaschke product. According to \cite[Ch.~X, Lm.~1.4]{Ga} there exists $\varepsilon>0$ such that $B^{-1}(\Di_{\varepsilon}):=\sqcup_{j\ge 1}V_j$ and $B$ maps each $V_j$ biholomorphically onto $\Di_{\varepsilon}$.  By $b_j :\Di_\varepsilon\to V_j$ we denote the holomorphic map inverse to $B|_{V_j}$.
\begin{Proposition}\label{prop3.2}
There exists a positive $\delta\le\varepsilon$ and functions $f_j\in H^\infty(\Di\times\Di_\delta)$ such that
\begin{equation}\label{inter1}
f_j(b_j(w),w)=1,\quad f_j(b_k(w),w)=0,\quad k\ne j,
\end{equation}
\begin{equation}\label{inter2}
\sum_j |f_j(z,w)|\le 2M,\quad (z,w)\in \Di\times\Di_\delta,
\end{equation}
where 
\[
M:=\sup_{\|\{a_j\}\|_\infty\le 1}\inf\{\|f\|\, ;\, f\in H^\infty,\ f(z_j)=a_j,\ j=1, 2,\dots\}
\]
is the constant of interpolation for $\{z_j\}$.
\end{Proposition}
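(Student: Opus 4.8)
The plan is to obtain the functions $f_j(\cdot,w)$ by deforming, holomorphically in the parameter $w$, a fixed bounded linear interpolation operator for the unperturbed sequence $\{z_j\}=\{b_j(0)\}$. The classical input is that, since $\{z_j\}$ is interpolating with constant $M$, one may choose interpolating functions $g^0_k\in H^\infty$ with $g^0_k(z_l)=\delta_{kl}$ and $\sup_{z\in\Di}\sum_k|g^0_k(z)|<2M$ (cf.\ \cite[Ch.~VII]{Ga}; equivalently, $\{a_k\}\mapsto\sum_k a_kg^0_k$ is a bounded linear right inverse of the restriction map $H^\infty\to\ell^\infty$ of norm $<2M$). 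It is worth noting that the explicit ``Lagrange--Blaschke'' candidates $\frac{B(z)-w}{(z-b_j(w))\,B'(b_j(w))}$ do satisfy \eqref{inter1} and are holomorphic in $(z,w)$ with no complex conjugates occurring, but already at $w=0$ their moduli need not be summable uniformly in $z$, so they do not yield \eqref{inter2}; hence the detour.

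For $w$ in a small disk $\Di_\delta$, $\delta\le\varepsilon$, I would form the matrix $G(w):=\bigl(g^0_k(b_l(w))\bigr)_{k,l}$. Each entry is holomorphic in $w$, $G(0)=I$, and $G(w)$ is a bounded operator on $\ell^1$ whose column sums $\sum_k|g^0_k(b_l(w))|$ are uniformly close to those of $I$ (see below). Put $\beta(z):=(g^0_k(z))_k$, which lies in $\ell^1$ with $\|\beta(z)\|_{\ell^1}=\sum_k|g^0_k(z)|<2M$, and \emph{define}
\[
f_j(z,w):=\bigl(G(w)^{-1}\beta(z)\bigr)_j .
\]
Then \eqref{inter1} is immediate, since $\beta(b_l(w))$ is the $l$th column $G(w)e_l$ of $G(w)$, whence $f_j(b_l(w),w)=\bigl(G(w)^{-1}G(w)e_l\bigr)_j=\delta_{jl}$; and \eqref{inter2} reduces to an operator estimate,
\[
\sum_j|f_j(z,w)|=\|G(w)^{-1}\beta(z)\|_{\ell^1}\le\|G(w)^{-1}\|_{\mathcal L(\ell^1)}\sup_z\|\beta(z)\|_{\ell^1}<2M\,\|G(w)^{-1}\|_{\mathcal L(\ell^1)} ,
\]
so it suffices to make $\|G(w)^{-1}\|_{\mathcal L(\ell^1)}$ as close to $1$ as we wish by shrinking $\delta$. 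Holomorphy of $f_j$ in $(z,w)$ is then routine: $z\mapsto\beta(z)$ is $\ell^1$-valued holomorphic (weak holomorphy together with the uniform $\ell^1$-bound), $w\mapsto G(w)$ is $\mathcal L(\ell^1)$-valued holomorphic and hence so is $w\mapsto G(w)^{-1}$ on the set where $\|I-G(w)\|_{\mathcal L(\ell^1)}<1$, and $f_j(z,w)$ is a coordinate functional evaluated on the $\ell^1$-valued holomorphic map $(z,w)\mapsto G(w)^{-1}\beta(z)$; finally $|f_j(z,w)|\le\sum_j|f_j(z,w)|<2M$, so $f_j\in H^\infty(\Di\times\Di_\delta)$.

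The crux is the estimate of $\|I-G(w)\|_{\mathcal L(\ell^1)}$, i.e.\ of the maximal column sum $\sup_l\sum_k|\delta_{kl}-g^0_k(b_l(w))|=\sup_l\sum_k|g^0_k(z_l)-g^0_k(b_l(w))|$. Applying a Cauchy estimate to the locally bounded sum $\zeta\mapsto\sum_k|g^0_k(\zeta)|$ yields $\sum_k|(g^0_k)'(\zeta)|\le CM/(1-|\zeta|)$ for all $\zeta\in\Di$; integrating along the hyperbolic geodesic from $z_l$ to $b_l(w)$ and using that $\rho(b_l(w),z_l)\le|w|/\varepsilon$ — because $b_l\colon\Di_\varepsilon\to V_l$ is a pseudohyperbolic isometry onto $V_l\subset\Di$ and the inclusion $V_l\hookrightarrow\Di$ is a contraction — one obtains $\sum_k|g^0_k(z_l)-g^0_k(b_l(w))|\le C'M|w|/\varepsilon$ uniformly in $l$. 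Hence $\|I-G(w)\|_{\mathcal L(\ell^1)}\le C'M\delta/\varepsilon$, which is arbitrarily small for $\delta$ small, so $G(w)$ is invertible on $\Di_\delta$ with $\|G(w)^{-1}\|_{\mathcal L(\ell^1)}$ as close to $1$ as desired, and the Neumann series supplies the $\mathcal L(\ell^1)$-holomorphy of $w\mapsto G(w)^{-1}$. I expect the only delicate point to be tracking the numerical constant so that the final bound is exactly $2M$: this forces taking the classical input $g^0_k$ with $\sup_z\sum_k|g^0_k(z)|$ \emph{strictly} below $2M$, so that the unavoidable loss $\|G(w)^{-1}\|_{\mathcal L(\ell^1)}>1$ is absorbed by choosing $\delta$ small, and being careful about the $\ell^1$ bookkeeping for $G(w)$. (Alternatively the $f_j$ could be produced directly by solving, holomorphically in $w$, the Carleson-type $\bar\partial$-equations $\bar\partial_z v=\bar\partial_z\chi/(B-w)$ underlying the construction of the $g^0_k$; the constant is the delicate issue either way.)
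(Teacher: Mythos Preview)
Your approach is essentially the same as the paper's: both construct the \emph{same} functions $f_j$ by correcting a fixed linear interpolation operator for $\{z_j\}$ with the inverse of a perturbation matrix close to the identity. The paper works on $\ell_\infty$ with the operator $P(w)=R(w)\circ L$ (where $L(\{a_k\})=\sum_k a_k g_k$ and $R(w)$ is restriction to $\{b_k(w)\}$); your $G(w)$ is precisely $P(w)^T$ acting on $\ell^1$, and your formula $f_j=(G(w)^{-1}\beta)_j$ unwinds to the paper's $f_j(\cdot,w)=L\circ P(w)^{-1}(e_j)$.

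The differences are bookkeeping. The paper quotes the sharper bound $\sum_j|g_j(z)|\le M$ (Garnett, Ch.~VII, Th.~2.1) rather than your $<2M$, and applies the Cauchy estimate once to the operator-valued holomorphic map $w\mapsto P(w)$ on $\Di_\varepsilon$ to get $\|P(w)-I\|_{\mathcal L(\ell_\infty)}\le |w|\,M/(\varepsilon-|w|)\le\tfrac12$ for $|w|\le\delta:=\varepsilon/(3M)$, hence $\|P(w)^{-1}\|\le 2$ and the bound $2M$ directly. Your route---Cauchy estimates on the $\ell^1$-valued map $\zeta\mapsto\beta(\zeta)$ followed by integration along hyperbolic geodesics and the Schwarz--Pick bound $\rho(b_l(w),z_l)\le|w|/\varepsilon$---reaches the same conclusion but with an implicit $\delta$ and the constant absorbed via the strict inequality $<2M$. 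Both are correct; the paper's packaging is shorter and yields an explicit $\delta$.
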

\begin{proof}
According to \cite[Ch.~VII, Th.~2.1]{Ga} there exist functions $g_j\in H^\infty$ such that
\[
g_j(z_j)=1,\quad g_j(z_k)=0,\quad k\ne j,\quad\text{and}\quad \sum_j |g_j(z)|\le M,\quad z\in \Di .
\]
Consider a bounded linear operator $L:\ell_\infty\to H^\infty$ of norm $\|L\|=M$ defined by the formula
\[
L(\{a_j\})(z):=\sum_j a_j g_j(z),\quad z\in\Di.
\]
Let $R(w)$ be the restriction operator to $\{b_j(w)\}$, $w\in\Di_\varepsilon$. Then
\[
(R(w)\circ L)(\{a_j\})(b_k(w)):=\sum_j a_j g_j(b_k(w)),\quad k\in\N.
\]
This and Cauchy estimates for derivatives of bounded holomorphic functions imply that $P(w):=R(w)\circ L:\ell_\infty\to\ell_\infty$, $w\in\Di_\varepsilon$, is a family of bounded operators  of norms $\le M$ holomorphically depending on $w$ and such that $P(0)={\rm id}$. The Cauchy estimates yield $\|\frac{dP}{dw}(w)\|\le\frac{M}{\varepsilon-|w|}$. In particular, for $|w|\le\delta:=\frac{\varepsilon}{3M}$ we have
\[
\bigl|\|P(w)\|-1\bigr|:=\bigl|\|P(w)\|-\|P(0)\|\bigr|\le |w|\frac{M}{\varepsilon-|w|}\le\frac 12.
\]
In particular, $P(w)$ is invertible and $\|P(w)^{-1}\|\le 2$. 

We set
\[
\hat L(w):=L\circ P(w)^{-1},\quad w\in\Di_\delta .
\]
Then $\hat L(w):\ell_\infty\to H^\infty$ is continuous, holomorphically depends on $w$ and $\|\hat L(w)\|\le 2M$. Moreover, $R(w)\circ\hat L(w)={\rm id}$.

We define
\begin{equation}\label{inter3}
f_j(\cdot,w):=\hat L(w)(\{\delta_{ij}\}),
\end{equation}
where $\delta_{ij}=1$ if $i=j$ and $0$ otherwise. Clearly $\{f_j\}$ satisfies the required properties.
\end{proof}

Let $\Gamma_B:=\{(z,w)\in\Di^2\, ;\, w=B(z)\}$ be the graph of $B$ in $\Di^2$.
For a complex Banach space $X$ and $\Di\times\Di_r\subset\Di^2$ consider the Banach space $C_{\rm comp}^{\rm h, u}(\Di\times\Di_r ;X)$ of bounded $X$-valued continuous functions on $\Di\times\Di_r$ with relatively compact images holomorphic with respect to the first coordinate and uniformly continuous with respect to the second one equipped with norm $\|f\|:=\sup_{(z,w)\in\Di\times\Di_r}\|f(z,w)\|_X$.
By $C_{\rm comp}^{\rm h, \infty}(\Di\times\Di_r ;X)\subset C_{\rm comp}^{\rm h, u}(\Di\times\Di_r ;X)$ we denote the subspace of $X$-valued functions having {\em bounded} derivatives of all orders with respect to the second coordinate. In addition, by $C_{\rm comp}(\Di_r\times\N; X)\supset
C_{\rm comp}^\infty(\Di_r\times\N ; X)$ we denote the space of $X$-valued continuous functions on $\Di_r\times\N$ with relatively compact images and its subspace of functions having bounded derivatives of all orders with respect to the coordinate in $\Di_r$. 
Let $R: f\mapsto f|_{\Gamma_B}$ be the restriction operator.
\begin{Proposition}\label{pr1}
For $\delta$ as in Proposition \ref{prop3.2} there exists a linear bounded operator $S: C_{\rm comp}(\Di_{\delta/2}\times\N ; X)\to C_{\rm comp}^{\rm h, u}(\Di\times\Di_{\delta/2} ;X)$ of norm $\le 2M$ which maps $C_{\rm comp}^\infty(\Di_{\delta/2}\times\N ; X)$ to $C_{\rm comp}^{\rm h, \infty}(\Di\times\Di_{\delta/2} ;X)$ and such that
\[
(R\circ S)(g)(b_j(w),w)=g(w,j)\quad\text{for all}\quad (w,j)\in\Di_{\delta/2}\times\N,\ g\in C_{\rm comp}(\Di_{\delta/2}\times\N ; X).
\]
\end{Proposition}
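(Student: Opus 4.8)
The plan is to build $S$ directly from the interpolating functions $f_j(z,w)\in H^\infty(\Di\times\Di_\delta)$ produced by Proposition~\ref{prop3.2}, by the formula
\[
S(g)(z,w):=\sum_{j\ge 1} g(w,j)\, f_j(z,w),\qquad (z,w)\in\Di\times\Di_{\delta/2}.
\]
First I would check that this series converges in $C_{\rm comp}^{\rm h,u}(\Di\times\Di_{\delta/2};X)$. Since $g$ has relatively compact image, the set $\{g(w,j)\}_{w,j}$ lies in a compact $K_g\subset X$; combining this with the bound $\sum_j|f_j(z,w)|\le 2M$ from \eqref{inter2} gives, for each fixed $(z,w)$, an absolutely convergent series whose value lies in the closed convex balanced hull of $(2M)\,K_g$ — hence $S(g)$ has relatively compact image and $\|S(g)\|\le 2M\|g\|$. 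Holomorphy in $z$ is clear term by term since each $f_j(\cdot,w)$ is holomorphic and the convergence is locally uniform in $z$ (the bound $2M$ is uniform). Uniform continuity in $w$: here I would use that $g(\cdot,j)$ is uniformly continuous on $\Di_{\delta/2}$ uniformly in $j$ (which is exactly what membership in $C_{\rm comp}(\Di_{\delta/2}\times\N;X)$ encodes, since $\Di_{\delta/2}\times\N$ is a product and the image is relatively compact) together with the uniform continuity in $w$ of the $f_j$, again controlled uniformly in $j$ by Cauchy estimates applied on $\Di_\delta$; a standard $\varepsilon/3$ split of $\|S(g)(z,w)-S(g)(z,w')\|$ into the variation of $g$ and the variation of the $f_j$ finishes this. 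The interpolation identity $(R\circ S)(g)(b_j(w),w)=\sum_i g(w,i)f_i(b_j(w),w)=g(w,j)$ is immediate from \eqref{inter1}.

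Next I would verify that $S$ maps the subspace $C_{\rm comp}^\infty(\Di_{\delta/2}\times\N;X)$ into $C_{\rm comp}^{\rm h,\infty}(\Di\times\Di_{\delta/2};X)$, i.e., that $\partial^n_w S(g)$ is bounded for every $n$. Differentiating the series formally gives $\partial_w^n S(g)(z,w)=\sum_j\sum_{a+b=n}\binom{n}{a}\,\partial_w^a g(w,j)\,\partial_w^b f_j(z,w)$; each $\partial_w^a g(\cdot,\cdot)$ is again a function with relatively compact image in $X$ (by hypothesis on $g$), and the derivatives $\partial_w^b f_j$ are uniformly bounded on $\Di\times\Di_{\delta/2}$ by Cauchy estimates on $\Di\times\Di_\delta$ together with $\sum_j\|f_j(z,\cdot)\|_{H^\infty(\Di_\delta)}$-type control — more precisely, from $\sum_j|f_j(z,w)|\le 2M$ on $\Di\times\Di_\delta$ one gets $\sum_j|\partial_w^b f_j(z,w)|\le C_b M$ on $\Di\times\Di_{\delta/2}$ via the Cauchy integral formula over a circle of radius comparable to $\delta$. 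This makes the differentiated series absolutely and uniformly convergent, legitimating termwise differentiation and giving the required boundedness of all $w$-derivatives, with image again relatively compact.

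The main obstacle I anticipate is the bookkeeping around the $\sum_j|\partial_w^b f_j|$ estimates: \eqref{inter2} is only stated for the functions $f_j$ themselves, not their derivatives, so one must derive the derivative bounds by hand. The clean way is to note that $w\mapsto \{f_j(z,w)\}_j$ is, for each fixed $z$, a holomorphic $\ell_\infty$-valued (in fact $\ell_1$-controlled) function on $\Di_\delta$ of norm $\le 2M$ — this is visible from the construction $f_j(\cdot,w)=\hat L(w)(\{\delta_{ij}\})$ in \eqref{inter3}, since $\hat L(w)=L\circ P(w)^{-1}$ with $\|L\|=M$ and $P(w)^{-1}$ holomorphic of norm $\le 2$ — and then apply the vector-valued Cauchy estimates to $\hat L$ to bound $\|\partial_w^b\hat L(w)\|$ on $\Di_{\delta/2}$, which dominates $\sum_j|\partial_w^b f_j(z,w)|$ because the $\ell_1$-norm of $\{f_j(z,w)\}_j$ is bounded by $\|\hat L(w)\|\cdot$(the relevant norm). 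Once this uniform-in-$j$ control of all $w$-derivatives of the $f_j$ is in hand, linearity and boundedness of $S$ with the constant $2M$, and all the regularity claims, follow by the routine arguments sketched above.
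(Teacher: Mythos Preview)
Your construction is exactly the paper's: define $S(g)(z,w):=\sum_j f_j(z,w)\,g(w,j)$ and read off boundedness, the interpolation identity, and the $C^\infty$-preservation from \eqref{inter1}, \eqref{inter2} together with Cauchy estimates for the $w$-derivatives of the $f_j$. The paper's own proof is a single sentence citing those two properties, so your expanded argument simply fills in the details it leaves implicit.

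One small caveat worth noting: your claimed justification that ``membership in $C_{\rm comp}(\Di_{\delta/2}\times\N;X)$ encodes uniform continuity of $g(\cdot,j)$ uniformly in $j$'' is not correct as written --- relative compactness of the image on the \emph{open} disk $\Di_{\delta/2}$ does not force uniform continuity (think of $g(w,1)=\sin\bigl((\delta/2-|w|)^{-1}\bigr)$). This is a point the paper also passes over silently; in practice it is harmless, since the operator $S$ is only ever invoked (in Proposition~\ref{pr2}) on inputs from $C_{\rm comp}^\infty(\Di_{\delta/2}\times\N;X)$, where the bounded-derivative hypothesis gives uniform continuity directly, and your treatment of that $C^\infty$ case via the Cauchy-estimate bound $\sum_j|\partial_w^b f_j(z,w)|\le C_b M$ on $\Di\times\Di_{\delta/2}$ is correct.
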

\begin{proof}
We define
\begin{equation}\label{soperator}
S(g)(z,w):=\sum_j f_{j}(z,w)g(w,j)
\end{equation}
with $f_j$ as in Proposition \ref{prop3.2}. Then the required result follows from properties \eqref{inter1}, \eqref{inter2} of that proposition.
\end{proof}

We retain notation of Propositions \ref{prop3.2} and  \ref{pr1}. 

By $E_\delta (X)$ we denote the space of $X$-valued $C^\infty$ $(0,1)$ forms $\omega=f d\bar z$ on $\Di$ with supports in $B^{-1}(\Di_{\delta/2})$ such that the function $\tilde f(w,j):=f(b_j(w))\frac{d \bar b_j(w)}{d\bar w}$, $(w,j)\in\Di_{\delta/2}\times\N$, belongs to $C_{\rm comp}^\infty(\Di_{\delta/2}\times\N ; X)$. 

\begin{Proposition}\label{pr2}
There exists a linear operator $G: E_\delta(X)\to C_{\rm comp}^\infty(\Di;X)$ such that for $\omega=f d\bar z\in E_\delta(X)$
\begin{itemize}
\item[(1)]
$$\bar\partial G(\omega)=\omega,$$ 
\item[(2)]
$$\sup_{z\in\Di}\|G(\omega)(z)\|_X\le 2M\cdot\sup_{(w,\,j)\in\Di_{\delta}\times\N}\|\tilde f(w,j)\|_X,$$
\item[(3)]
$$\{(G(\omega)\circ b_j)(w)\, ;\, (w,j)\in\Di_{\delta/2}\times\N\}\in C_{\rm comp}^\infty(\Di_{\delta/2}\times\N ; X);$$
\item[(4)]
$G(\omega)|_{\Di\setminus B^{-1}(\Di_{\delta/2})}$ is the uniform limit of a sequence of functions of the form $\sum_{i=0}^K h_iB^{-i}$ with $h_i\in H_{\rm comp}^\infty(\Di;X)$.
\end{itemize}
\end{Proposition}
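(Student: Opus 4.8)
\smallskip\noindent\textit{Proof plan.}
The construction is local-to-global, in the spirit of solving $\bar\partial$ in a tube over an interpolating sequence: first solve the equation sheet-by-sheet inside $B^{-1}(\Di_\varepsilon)=\sqcup_k V_k$, then amalgamate the sheet-solutions by means of the operator $S$ of Proposition~\ref{pr1}, and finally extend the resulting function across $\partial B^{-1}(\Di_\varepsilon)$ to all of $\Di$.

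\smallskip\noindent\textit{Step 1: local solve and amalgamation.}
For each $j$ I would take $\tilde v_j$ to be the Cauchy--Green (Pompeiu) solution of $\bar\partial\tilde v_j=\tilde f(\cdot,j)\,d\bar w$, integrating over $\operatorname{supp}\tilde f(\cdot,j)$ (a compact subset of $\Di_{\delta/2}$). Then $\tilde v_j\in C(\Co;X)$, it is holomorphic off that support --- in particular on $\{|w|>\delta/2\}$, where $\tilde v_j(w)=\sum_{i\ge1}c_{j,i}w^{-i}$ with $\|c_{j,i}\|$ decaying like $(\delta/2)^i\sup_{w,j}\|\tilde f(w,j)\|$ --- and $\|\tilde v_j\|_\infty\le\delta\sup\|\tilde f\|$; moreover $g(w,j):=\tilde v_j(w)$ defines an element of $C_{\rm comp}^\infty(\Di_{\delta/2}\times\N;X)$. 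Put $\Phi:=S(g)\in C_{\rm comp}^{\rm h,\infty}(\Di\times\Di_{\delta/2};X)$, so $\Phi(z,w)=\sum_j f_j(z,w)\tilde v_j(w)$ is holomorphic in $z$, $\|\Phi\|\le 2M\|g\|\le 2M\delta\sup\|\tilde f\|$, and by \eqref{inter1} it enjoys the reproducing identity $\Phi(b_k(w),w)=\tilde v_k(w)$. Now set $G(\omega)(z):=\Phi(z,B(z))$ for $z\in B^{-1}(\Di_{\delta/2})$; on each $b_k(\Di_{\delta/2})$ this equals $\tilde v_k\circ B$, and since $B$ is holomorphic the chain rule together with $\bar\partial\tilde v_k=\tilde f(\cdot,k)\,d\bar w$, $b_k'(B(z))=1/B'(z)$ and $\tilde f(w,k)=f(b_k(w))\overline{b_k'(w)}$ gives $\bar\partial(\tilde v_k\circ B)=f\,d\bar z$, i.e.\ $\bar\partial G(\omega)=\omega$ there. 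Linearity of $G$ is clear, and on $B^{-1}(\Di_{\delta/2})$ properties (2) and (3) follow from the norm bound on $\Phi$ (using $\delta<1$) and from $\Phi\in C_{\rm comp}^{\rm h,\infty}$.

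\smallskip\noindent\textit{Step 2: extension to $\Di$.}
Because each $\tilde v_k$ is holomorphic on all of $\{|w|>\delta/2\}$, the formula $\tilde v_k\circ B$ extends $G(\omega)$ holomorphically from $b_k(\Di_{\delta/2})$ to the whole sheet $V_k$, and these extensions glue to give $G(\omega)$ on $B^{-1}(\Di_\varepsilon)$, still bounded by $\delta\sup\|\tilde f\|$, holomorphic off $\operatorname{supp}\omega$, and solving $\bar\partial G(\omega)=\omega$. One then has to extend $G(\omega)$ holomorphically across $\partial B^{-1}(\Di_\varepsilon)$ to $\{|B|\ge\varepsilon\}$, where $\omega$ and hence $\bar\partial G(\omega)$ vanish. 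To do this I would pass to the Blaschke variable: on $\{|B|>\delta/2\}$ one has $\tilde v_k(B(z))=\sum_{i\ge1}c_{k,i}B(z)^{-i}$, which suggests representing $G(\omega)$ there as a series $\sum_{i\ge0}h_iB^{-i}$ with suitable $h_i\in H_{\rm comp}^\infty(\Di;X)$ built from the $c_{j,i}$ and the $f_j(\cdot,0)$; since $\|c_{j,i}\|\lesssim(\delta/2)^i\sup\|\tilde f\|$ while $|B^{-i}|\le(2/\delta)^i$ on $\{|B|\ge\delta/2\}$, the series converges geometrically and uniformly on $\{|B|\ge\delta/2\}$ (after, if necessary, replacing $\delta/2$ by the actual smaller support radius). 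Concretely one introduces a cut-off $\rho(B)$ in the Blaschke variable ($\rho\equiv1$ near $0$, $\operatorname{supp}\rho\subset\Di_{3\delta/4}$), obtains a globally defined function with a $\bar\partial$-error supported in an annular preimage $B^{-1}(\{2\delta/3\le|w|\le3\delta/4\})$ where every $\tilde v_k$ is holomorphic, and removes that error by an explicit correction assembled from Cauchy transforms of $w^{-i}\partial_{\bar w}\rho$ (again amalgamated by $S$), so arranged that the final $G(\omega)$ on $\{|B|\ge\delta/2\}$ is a uniform limit of finite sums $\sum_{i=0}^K h_iB^{-i}$ --- this is (4). The same expansions in $B^{-1}$, together with the bound from Step 1, give (2) and (3) on $\{|B|\ge\delta/2\}$ as well, with $2M\sup\|\tilde f\|$ as the global bound because $\delta<1$.

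\smallskip\noindent\textit{The main obstacle.}
Step 1 is routine given Propositions~\ref{prop3.2} and~\ref{pr1}. The technical heart of the proof is Step 2 --- extending the sheetwise solution $\tilde v_k\circ B$ past $B^{-1}(\Di_\varepsilon)$ so that the global function is continuous and bounded on $\Di$ \emph{and} carries the explicit $\sum h_iB^{-i}$ structure demanded by (4) (it is this structure that will let $G(\omega)$ extend continuously to the trivial parts $M_s$ in the proof of Theorem~\ref{dbar}). The difficulty is genuine: the principal part $\sum_i c_{k,i}B^{-i}$ of $G(\omega)$ on the $k$-th annular sheet depends on $k$, so no single globally holomorphic Laurent-in-$B$ series represents $G(\omega)$ exactly, and (4) can hold only in the limit; making the sheet-dependence invisible to uniform approximation is what forces the careful choice of cut-off and correction above and requires the quantitative geometric-decay estimates on the $c_{k,i}$ (which in turn use the gap between the support radius and the transition radius $2\delta/3$ of $\rho$).
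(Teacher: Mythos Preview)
Your Step~1 is fine, but you have applied the two key operators in the wrong order, and this is precisely what creates the obstacle in Step~2.  You first solve $\bar\partial$ sheetwise (apply the Cauchy--Green operator $I$ in $w$ to each $\tilde f(\cdot,j)$, getting $\tilde v_j$) and \emph{then} amalgamate via $S$, obtaining $\Phi(z,w)=\sum_j f_j(z,w)\,\tilde v_j(w)$.  Since the interpolating functions $f_j(z,w)$ are only defined for $|w|<\delta$, so is $\Phi$; hence $G(\omega)(z)=\Phi(z,B(z))$ lives only on $B^{-1}(\Di_\delta)$, and you are forced into the extension problem.  Your proposed cut-off/correction scheme is then circular: the $\bar\partial$-error produced by the cut-off $\rho(B)$ is again a form in $E_\delta(X)$, and ``removing it by an explicit correction amalgamated by $S$'' lands you back in a function defined only near $B^{-1}(0)$.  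The alternative suggestion of building the $h_i$ from $f_j(\cdot,0)$ and the Laurent coefficients $c_{j,i}$ does not work either, because the reproducing identity $f_j(b_k(w),w)=\delta_{jk}$ holds only on the diagonal $w=B(z)$, whereas matching the sheetwise expansions $\sum_i c_{k,i}w^{-i}$ requires agreement on entire annuli.

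The paper reverses the order: first apply $S$ to the data $\tilde f$, obtaining $S(\tilde f)(z,w)\in C_{\rm comp}^{\rm h,\infty}(\Di\times\Di_{\delta/2};X)$ with compact $w$-support, and \emph{then} apply the Cauchy--Green operator in the second variable,
\[
I(S(\tilde f))(z,w)=\frac{1}{2\pi i}\int\!\!\!\int_{\Di}\frac{S(\tilde f)(z,\xi)}{\xi-w}\,d\xi\wedge d\bar\xi,
\]
setting $G(\omega)(z):=I(S(\tilde f))(z,B(z))$.  Because $I$ integrates over a compact set in $\xi$, the function $I(S(\tilde f))(z,\cdot)$ is defined on all of $\Co$ and holomorphic for $|w|>\delta/2$, while remaining holomorphic in $z$ since $S(\tilde f)$ is.  Thus $G(\omega)$ is defined on all of $\Di$ from the outset, no extension is needed, (1)--(3) follow immediately, and (4) drops out by applying the Cauchy integral formula to $w\mapsto I(S(\tilde f))(z,w)$ over $\{|w|=\delta/2\}$ and expanding the kernel in powers of $w^{-1}$.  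Note that your $\Phi$ and the paper's $I(S(\tilde f))$ are genuinely different functions (in yours $f_j(z,w)$ sits outside the $\xi$-integral, in the paper's it sits inside), so this is not merely a rewriting; the order of the two operations matters, and putting $I$ last is exactly what buys the global definition and the clean proof of~(4).
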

\begin{proof}
We rewrite $\omega$ as a $(0,1)$ form on $\Di\times\N$ with values in $X$ replacing $z$ by $b_j(w)$ in each $V_j$. Then according to assumptions of the proposition we obtain the form $\tilde f d\bar w$ with $\tilde f\in C_{\rm comp}^\infty(\Di_{\delta/2}\times\N ; X)$. Consider the form $\tilde\omega:=S(\tilde f)d\bar w$, see \eqref{soperator}. Since ${\rm supp}\,\tilde\omega\subset\Di_{\delta/2}$, it can be regarded as a $C^\infty$ form on $\Di$ with values in $H^\infty(\Di; X)$. We define a linear operator $I :C_{\rm comp}^{\rm h, \infty}(\Di^2 ;X)\to C_{\rm comp}^{\rm h, \infty}(\Di^2 ;X)$ by the formula
\[
I(h)(z,w)=\frac{1}{2\pi i}\int\int_{\Di}\frac{h(z,\xi)}{\xi-w}d\xi\wedge d\bar\xi .
\]
(If we rewrite $I(h)(z,w)$ in polar coordinates as $\frac{1}{2\pi}\int\int_{w+\Di}h(z,re^{i\phi}+w)e^{-i\phi} dr\wedge d\phi$, then 
$I(h)$ has a relatively compact image because $h$ has it.)

Now, see, e.g., \cite[Ch.~VIII.1]{Ga},
\[
\frac{\partial I(h)}{\partial\bar w}=h\quad\text{and}\quad \sup_{(z,w)\in\Di^2}\|I(h)(z,w)\|_X\le \sup_{(z,w)\in\Di^2}\|h(z,w)\|_X.
\]
Finally, we set 
\[
G(\omega)(z):=I(S(\tilde f))(z,B(z)),\quad z\in\Di.
\]
Since $I(S(\tilde f))$ depends holomorphically on the first coordinate, 
$$
\begin{array}{c}
\displaystyle
\frac{\partial G(\omega)}{\partial\bar z}:=\frac{\partial I(S(\tilde f))(z,w)}{\partial\bar w}|_{w=B(z)}\cdot \frac{d\bar B(z)}{d\bar z}=S(\tilde f)(z, B(z))\frac{d\bar B(z)}{d\bar z}\\
\\
\displaystyle =
(f\circ b_j)(w)\frac{d\bar b_j(w)}{d\bar w}|_{w=B(z)}\cdot\frac{d\bar B(z)}{d\bar z}=f(z)\quad\text{on each}\quad V_j.
\end{array}
$$
(We have used that $b_j$ is the map inverse to $B$ on $V_j$.)

Next, properties (2) and (3) follow from the corresponding properties of the operator $I$. To prove (4) note that $I(S(\tilde f))(z,w)$, $z\in\Di$, $w\in \Co\setminus\Di_{\delta/2}$, can be regarded as a continuous up to the boundary bounded holomorphic function in $w$ with values in $H_{\rm comp}^\infty(\Di; X)$. Applying the Cauchy integral formula to this function (integrating over the boundary $\{w\in\Co\, ;\, |w|=\delta/2\}$) we approximate it uniformly on $\Co\setminus\Di_{\delta/2}$ by a sequence of functions of the form $\sum_{i=0}^K h_i(z)w^{-i}$, $(z,w)\in\Di\times \Co\setminus\Di_{\delta/2}$, $h_i\in H_{\rm comp}^\infty(\Di;X)$, $K\in\N$.
Replacing $w$ by $B(z)$ we get the required result.
\end{proof}

\subsection{}\begin{proof}[Proof of Theorem \ref{dbar}]
For $x\in K$ choose a coordinate chart $R_{U'(x),H'(x)}\Subset M_a$ containing it. Since $x$ is a closure of an interpolating sequence and the base of topology on $M_a$ is defined by means of interpolating Blaschke products (see subsection~2.1), there exist an interpolating
Blashke product $B_{s(x)}$ with zero set $s(x)$ whose closure contains $x$ and a number $\delta:=\delta(x)>0$ for which Proposition \ref{prop3.2} is valid for $B:=B_{s(x)}$ such that $\{y\in M_a\, ;\, |\hat B_{s(x)}(y)|<\delta(x)\}\subset R_{U'(x),H'(x)}$.

Further, choose a coordinate chart $R_{U(x),H(x)}\subset\{y\in M_a\, ;\, |\hat B_{s(x)}(y)|<\frac{\delta(x)}{2}\}$ containing $x$ such that
$U\Subset U'$. Let $\{R_{U(x_i),H(x_i)}\}_{1\le i\le k}$ be a finite subcover of the open cover  $\{R_{U(x),H(x)}\, ;\, x\in K\}$ of $K$.
By definition $\cup_{1\le i\le k}R_{U(x_i),H(x_i)}\subset E(S,\beta G)$ is an open neighbourhood of $K$. Consider an open cover of  $E(S,\beta G)$ consisting of sets $K^c:=E(S,\beta G)\setminus K$ and $R_{U(x_i),H(x_i)}$, $1\le i\le k$. Let $\varphi_0,\varphi_1,\dots,\varphi_k$ be a $C^\infty$ partition of unity subordinate to this cover (see Proposition \ref{partition}); here ${\rm supp}\,\varphi_i\Subset R_{U(x_i),H(x_i)}$, $1\le i\le k$, and ${\rm supp}\,\varphi_0\Subset K^c$. Now, since $K\cap ({\rm supp}\,\varphi_0)=\emptyset$, for any $\omega\in \mathcal E_K^{0,1}(X)$ we have
\[
\omega=\sum_{j=0}^k\varphi_j\omega=\sum_{j=1}^k\varphi_j\omega .
\]
Note that ${\rm supp}\,(\varphi_i\omega)\subset K_i:=K\cap \bar R_{U(x_i),H(x_i)}$, $1\le i\le k$. Clearly, it suffices to prove the theorem for spaces $\mathcal E_{K_i}^{0,1}(X)$ and then define for $\omega\in \mathcal E_K^{0,1}(X)$,
\begin{equation}\label{reduct}
L_K(\omega):=\sum_{i=1}^k L_{K_i}(\varphi_i\omega),\quad\text{and}\quad \|\omega\|_K:=\sum_{i=1}^k\|\varphi_i\omega\|_{K_i}. 
\end{equation}

Therefore from now on we will assume that 
$$
\begin{array}{l}
\displaystyle
{\rm supp}\,\omega\subset K\subset R_{U,H}\subset\left\{y\in M_a\, ;\, |\hat B(y)|<\frac{\delta}{2}\right\}
\\
\displaystyle \subset\{y\in M_a\, ;\, |\hat B(y)|<\delta\}\subset R_{U',H'}, \quad U\Subset U',
\end{array}
$$
where $B$ is an interpolating Blashke product with $\delta$ as in Proposition \ref{prop3.2}. 

Without loss of generality we will identify $U'$ with $\Di$ and $U$ with $\Di_t$ for some $t<1$. As before for some $(\delta\le)\, \varepsilon<1$ by $b_j:\Di_\varepsilon\to\Di$ we denote the map inverse to $B$ on $V_j$, where $B^{-1}(\Di_\varepsilon)=\sqcup_j V_j$.

Consider the restriction $\omega|_{\Di}=f(s,g)d\bar s$, where ${\rm supp}\, f\subset R_{U,H}\cap\Di$, $s$ is a holomorphic coordinate in $U'\subset S$ and $g\in G$. Since $\omega$ is of class $C^\infty$ and has compact support, $f\in C_{\rm comp}^\infty(U'\times G; X)$. 
Substituting $s=r(z),\, z\in\Di$ ($\, r:=\tilde r|_{\Di}$), we have
$\omega|_\Di=f(r(z),g)\frac{d\bar r}{d\bar z}d\bar z=:F(z)d\bar z$. Here $F$ is an $X$-valued $C^\infty$ function on $\Di$ with support in $r^{-1}(U)\cap \{z\in\Di\, ;\, |B(z)|<\frac{\delta}{2}\}$. Also,
\begin{equation}\label{chain}
(F\circ b_j)(w)\frac{d\bar b_j(w)}{d\bar w}=f((r\circ b_j)(w),g_j)\cdot\frac{d(\overline{r\circ b_j})(w)}{d\bar w},\quad w\in\Di_\delta ,
\end{equation}
where $g_j\in G$ is uniquely defined by the condition $b_j(w)\in V_j$.

According to our assumption, each element of the family of holomorphic functions $\{r\circ b_j\}_{j\in\N}$ maps $\Di_\delta$ into $\Di\, (:=U')$. In particular, any order derivatives of elements of this family are uniformly bounded on $\Di_{\delta/2}$. Since $f\in C_{\rm comp}^\infty(U'\times G;X)$, this and \eqref{chain} imply that the function $\widetilde F(w,j):=(F\circ b_j)(w)\frac{d\bar b_j(w)}{d\bar w}$, $(w,j)\in\Di_{\delta/2}\times\N$, belongs to $C_{\rm comp}^\infty(\Di_{\delta/2}\times\N\, ;\, X)$. This means that $\omega|_{\Di}\in E_\delta (X)$ and we can apply Proposition \ref{pr2}. 
Let $h:=G(\omega)\in C_{\rm comp}^\infty(\Di ;X)$ be the function satisfying conditions (1)--(4) of this proposition. Show that
it can be continuously extended to $V:=\{y\in M_a\, ;\, |\hat B(y)|<\frac{2\delta}{3}\}$ so that the extension is of class $C^\infty$.

In fact, $\omega=f(s,x)d\bar s$ on $\tilde r^{-1}(U')\, (\cong U'\times \beta G\ni (s,x))$ with ${\rm supp}\, f\subset R_{U,H}$.
Since $f\in C^\infty(\tilde r^{-1}(U');X)$, we can solve equation $\bar\partial H=\omega$ on $\tilde r^{-1}(U')$ by the formula:
\[
H(s,x)=\frac{1}{2\pi i}\int\int_{U'}\frac{f(\xi,x)}{\xi-s}d\xi\wedge d\bar\xi .
\]
In particular, $H\in C_{{\rm comp}}^\infty(\tilde r^{-1}(U'); X)$. Therefore,
$c:=h|_{r^{-1}(U')}-H|_{r^{-1}(U')}$ is an $X$-valued bounded holomorphic function on $r^{-1}(U')\cong U'\times G$ with relatively compact support (see Proposition \ref{pr2}\ (1)).
As it follows from \cite[Th.~2.1]{Br1} function $c$ admits a continuous extension $\hat c:\tilde r^{-1}(U')\to X^{**}$ (where $X^{**}$ is considered in the weak$*$ topology) such that $\varphi(\hat c(\cdot, x))$ is holomorphic for all $\varphi\in X^*$ and $x\in\beta G$. But ${\rm Im}(\hat c)\subset \overline{{\rm Im}(c)}\Subset X$ (here \, $\bar{\, }$\, stands for the weak$*$ closure in $X^{**}$). Thus, since the weak$*$ topology is equivalent to the norm-topology on each compact subset of $X$, $\hat c\in C_{{\rm comp}}(\tilde r^{-1}(U');X)$ and each $\hat c(\cdot, x)$, $x\in\beta G$, is an $X$-valued holomorphic function on $U'$ . Also, the family 
$\{\hat c(\cdot, x)\, ;\ x\in\beta G\}$ is uniformly bounded on $U'$. 
Applying the Cauchy estimates for derivatives of bounded holomorphic functions to elements of this family we obtain that $\hat c|_{\tilde r^{-1}(U'')}\in C_{\rm comp}^\infty(\tilde r^{-1}(U'');X)$ for any open $U''\Subset U'$. Finally, since $V\Subset R_{U',H'}$, there exists $U''\Subset U'$ such that $V\subset \tilde r^{-1}(U'')$. This shows that
$h$ is extended to $\Di\cup V$ as a $C^\infty$ function with relatively compact image.

Further, closure of the set $W:=\{z\in\Di\, ;\, |B(z)|> \frac{\delta}{2}\}$ contains $M(H^\infty)\setminus\{\Di\cup V\}$. According to Proposition \ref{pr2}~(4), $h|_{W}$ is the uniform limit of a sequence of functions of the form $\{\sum_{i=0}^K h_i B^{-i}\, ;\, h_i\in H_{\rm comp}^\infty(\Di; X)\}_{K\in\N}$. Clearly, each $h_i$ is extended to $M(H^\infty)$ as an $X$-valued holomorphic function (cf. similar arguments for $c$). Thus, $h|_{W}$ is extended to  $M(H^\infty)\setminus\{\Di\cup V\}$ as a continuous function holomorphic in interior points of this set  (in particular, this extension is of class $C^\infty$ there).

We conclude that $h$ is continuously extended to $M(H^\infty)$ and the extension $\hat h$ is of class $C^\infty$ on $M_a$. Since $\bar\partial h=\omega|_{\Di}$, we obtain by continuity of derivatives of $\hat h$ that $\bar\partial\hat h=\omega$ on $M_a$. Clearly, $\hat h$ is holomorphic outside ${\rm supp}\,\omega$. Finally, we set $L_{K}(\omega):=\hat h$ and $\|\omega\|_{K}:=\sup_{(w,j)\in \Di_\delta\times\N}\|\widetilde F(w,j)\|_X$. Then the required properties of $L_K$ follow from Proposition \ref{pr2}.

The proof of the theorem is complete. 
\end{proof}

\sect{Proofs of Proposition \ref{sheaf} and Theorems \ref{main}, \ref{cousin}}
\subsection{Proof of Proposition \ref{sheaf}}
First, we prove the result for $B=\Co$. 

Let us consider an open cover $(U_i)_{i\in I}$ of open $U\subset M(H^\infty)$ such that $U_i\Subset U$, $i\in I$, and its refinement
$(V_j)_{j\in J}$ such that $V_j\subset\bar V_j\subset U_{\tau(j)}$, where $\tau: J\to I$ is the refinement map. Let $f\in \mathcal O(U\cap\Di)$. Then according to \cite[Th.~3.2]{S1} applied to sets $V_j, U_{\tau(j)}$ and the function $f|_{U_{\tau(j)\cap\Di}}\in H^\infty(U_{\tau(j)}\cap\Di)$, there exists a family of functions $\tilde f_j\in C(\bar V_j)$ such that $\tilde f_j(z)=f(z)$ on $V_j\cap\Di$, $j\in J$. If now $V_j\cap V_k\ne\emptyset$, then
$\tilde f_j-\tilde f_k=0$ on the closure $\overline{V_j\cap V_k\cap\Di}\subset M(H^\infty)$ of $V_j\cap V_k\cap\Di$. But this compact set contains $V_j\cap V_k$. (For otherwise, there exists an open subset $W$ of $V_j\cap V_k$ such that $W\cap \overline{V_j\cap V_k\cap\Di}=\emptyset$. But $\Di$ is dense in $M(H^\infty)$ and so $W\cap\Di\ne\emptyset$. On the other hand, $W\cap\Di\subset V_j\cap V_k\cap\Di=\emptyset$, a contradiction.) Thus $\tilde f_j=\tilde f_k$ on $V_j\cap V_k$. This implies that there exists a function
$\tilde f\in\mathcal O(U)$ such that $\tilde f|_{U\cap\Di}=f$ defined by $\tilde f:=\tilde f_j$ on $V_j$, $j\in J$. Clearly, such $\tilde f$ is unique.

Let us consider the general case. Assume that $f\in\mathcal O(U\cap \Di;B)$ is such that $f|_{V\cap\Di}\in H_{\rm comp}^\infty(V\cap\Di; B)$ for every open $V\Subset U$. Applying the scalar case of the proposition to the family of functions $\varphi\circ f$ with $\varphi\in B^*$ we obtain that $f$ has an extension $\tilde f\in \mathcal O(U; B^{**})$. However, for each open $V\Subset U$, the image of $\tilde f|_{V}$ belongs to the weak$^*$ closure in $B^{**}$ of the compact subset $\overline{f(V\cap\Di)}$ of $B$. (We have used here that $V\subset\overline{V\cap\Di}$.) The former set being compact is weak$^*$ closed. Thus $\tilde f|_V\in H_{\rm comp}^\infty(V;B)$ for all such $V$. This implies that $\tilde f\in \mathcal O(U;B)$ as required.\qquad $\Box$

\subsection{} The following auxiliary result will be used in the proof of Theorem \ref{main}.
\begin{Lm}\label{le4.2}
For an open cover of $M(H^\infty)$ there exist finite open refinements $(W_j)_{1\le j\le k}$  and $(V_j)_{1\le j\le k}$  
such that
\begin{enumerate}
\item
$\bar V_j\Subset W_j$ for all $1\le j\le k$; 
\item $V_{1},\dots, V_{r}$ cover $M_s$ and $\bar W_{m}\cap\bar W_{n}=\emptyset$ for all $1\le m\ne n\le r$; 
\item
$W_{r+1},\dots, W_{k}$ are relatively compact subsets of $M_a$. 
\end{enumerate}
\end{Lm}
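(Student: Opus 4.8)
The plan is to build the two refinements in stages, separating the trivial part $M_s$ from the analytic part $M_a$ and exploiting the zero-dimensionality of $M_s$. First I would use the fact that $M_s$ is compact and totally disconnected (${\rm dim}\,M_s=0$, recalled in Section~2.3): every point of $M_s$ has a neighbourhood basis of clopen subsets of $M_s$. Given the original open cover, for each $x\in M_s$ pick a member containing $x$; since $M(H^\infty)$ is compact Hausdorff, $x$ has an open neighbourhood $O_x$ in $M(H^\infty)$ whose closure lies inside that member. Intersecting with $M_s$ and using zero-dimensionality, I can shrink $O_x\cap M_s$ to a clopen (in $M_s$) neighbourhood $C_x$ of $x$. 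By compactness of $M_s$ finitely many $C_{x_1},\dots,C_{x_r}$ cover $M_s$, and by the standard disjointification trick for clopen covers of a zero-dimensional space (replace $C_{x_m}$ by $C_{x_m}\setminus\bigcup_{n<m}C_{x_n}$, still clopen in $M_s$) I may assume the $C_{x_m}$ are pairwise disjoint clopen subsets of $M_s$. Each $C_{x_m}$, being compact and open in $M_s$, is the trace on $M_s$ of an open subset of $M(H^\infty)$; since the $C_{x_m}$ are disjoint compacta with $M_s\cap\bigl(\bigcup_{n\ne m}C_{x_n}\bigr)$ closed and disjoint from $C_{x_m}$, normality of $M(H^\infty)$ lets me choose open sets $W_1,\dots,W_r$ in $M(H^\infty)$ with $C_{x_m}\subset W_m$, $\bar W_m\cap\bar W_n=\emptyset$ for $m\ne n$, each $\bar W_m$ contained in the appropriate member of the original cover, and $V_1,\dots,V_r$ with $\bar V_m\Subset W_m$ still covering $M_s$ (shrink once more by normality). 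This gives clauses (1) and (2) for the indices $1,\dots,r$.

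Next I would handle $M_a$. The set $K:=M(H^\infty)\setminus\bigcup_{m=1}^r V_m$ is a compact subset of $M_a$ (it misses $M_s$ by construction, and $M_a$ is open). Cover $K$ by members of the original open cover; since $K\subset M_a$ and $M_a$ is open, I may intersect each such member with $M_a$, and then shrink so that each resulting open set is relatively compact in $M_a$ — this uses that $M_a$ is open in the compact space $M(H^\infty)$, hence locally compact, so $K$ has a cover by open sets whose closures are compact subsets of $M_a$. Extract a finite subcover and call its members $W_{r+1},\dots,W_k$; each is a relatively compact subset of $M_a$, giving clause (3). Finally, since $\{W_1,\dots,W_r,W_{r+1},\dots,W_k\}$ is an open cover of $M(H^\infty)$ (the first $r$ cover a neighbourhood of $M_s$ via the $V_m$, and $W_{r+1},\dots,W_k$ cover $K$), the standard shrinking lemma for finite open covers of a normal space produces $V_{r+1},\dots,V_k$ with $\bar V_j\Subset W_j$ and $\{V_1,\dots,V_k\}$ still a cover; this completes clause (1) for all indices, and both families refine the original cover by construction.

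The main obstacle is the bookkeeping in the first paragraph: arranging simultaneously that the $W_m$ covering $M_s$ have pairwise disjoint closures, that $\bar W_m$ sits inside a prescribed member of the cover, and that the smaller $V_m$ still cover $M_s$. The disjoint-closures requirement is what forces the use of ${\rm dim}\,M_s=0$ — without total disconnectedness one could not separate a finite clopen cover of $M_s$ into open sets of $M(H^\infty)$ with disjoint closures. Once the disjoint clopen cover $\{C_{x_m}\}$ of $M_s$ is in hand, the passage to open sets in $M(H^\infty)$ with disjoint closures is routine normality plus the fact that disjoint compacta in a normal space have disjoint open neighbourhoods, applied finitely often; and the shrinkings to get the $V_j$ are all instances of the shrinking lemma. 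The rest (the $M_a$ part) is straightforward local compactness.
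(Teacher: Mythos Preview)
Your proposal is correct and follows essentially the same strategy as the paper: exploit ${\rm dim}\,M_s=0$ to obtain a finite cover of $M_s$ by open sets in $M(H^\infty)$ with pairwise disjoint closures (each contained in a member of the original cover), then cover the remaining compact subset of $M_a$ by relatively compact open sets, and use normality/shrinking to pass between the $V_j$ and $W_j$. The paper's proof is terser and builds the $V_j$ first and then enlarges to the $W_j$, whereas you go in the opposite order, but this is only a bookkeeping difference.
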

\begin{proof}
Since $M_s=0$ is compact and totally disconnected, for each cover $(U_i)$ of $M_s$ by open subsets of $M(H^\infty)$ there exists a finite open refinement $(U_j')$ such that each $U_j'$ is relatively compact in all $U_i$ containing it and $\bar U_m'\cap \bar U_n'=\emptyset$ for all $m\ne n$. 
Thus for an open cover $\mathcal V$ of $M(H^\infty)$ there exists a finite refinement $(V_j)_{1\le j\le k}$ such that $V_{1},\dots, V_{r}$ cover $M_s$ and $\bar V_{m}\cap\bar V_{n}=\emptyset$, $1\le m\ne n\le r$, and $V_{r+1},\dots, V_{k}$ are relatively compact coordinate charts in $M_a$. This implies that there exist open $W_1,\dots, W_k$ such that $V_j\Subset W_j$ for all $j$, $\bar W_m\cap\bar W_n$ for $1\le m\ne n\le r$,  $W_j\Subset M_a$ for $r+1\le j\le n$, and $W_1,\dots, W_k$ is a refinement of $\mathcal V$ as well.
\end{proof}
\noindent {\bf Partition of unity.}
Recall that $q:E(S,\beta G)\to M(H^\infty)$ is the quotient map which is identity on $E(S, G_{in})=M_a$, see subsection~2.1.  In notation of Lemma \ref{le4.2} we set $V_j':=q^{-1}(V_j)$, $W_j':=q^{-1}(W_j)$. By $\psi_j$ we denote $C^\infty$ functions on $E(M,\beta G)$ such that $\psi_j|_{\bar V_j'}>0$ and ${\rm supp}\,\psi_j\subset W_j'$ (see Lemma \ref{le4.1}). Then $\sum_{j=1}^k\psi_j(x)>0$ for each $x\in E(S,\beta G)$ (because $(V_j')$ is a cover of $E(S,\beta G)$). We define 
\[
\varphi_j:=\frac{\psi_j}{\sum_{j=1}^k\psi_j}.
\]
Then $\varphi_j$ is a nonnegative $C^\infty$ function on $E(S,\beta G)$, ${\rm supp}\, \varphi_j\subset W_j'$, $\varphi_j=1$ on $\widetilde V_j:=V_j'\setminus\bigl(\cup_{i\ne j} \bar W_i'\bigr)$ and $\sum_{j=1}^k\varphi_j=1$. Observe that $(\widetilde V_j)_{j=1}^r$ is an open cover of $q^{-1}(M_s)$ by pairwise disjoint open sets.
\subsection{Proof of Theorem \ref{main}}
We retain notation of the previous subsection.

\begin{proof}[Proof of the theorem for $k=1$] 

Let $c:=\{c_{ij}\}\in Z^1(\mathcal W ; \mathcal O_{M(H^\infty)}^B)$ be a cocycle defined on an open cover $\mathcal W$ of $M(H^\infty)$. Passing to a refinement of $\mathcal W$ we may assume that $\mathcal W=(W_j)_{1\le j\le k}$ is as in Lemma \ref{le4.2}. 
According to our construction if $W_i\cap W_j\ne\emptyset$ for $i\ne j$, then this set belongs to $M_a$. Using the above constructed $C^\infty$ partition of unity $\{\varphi_j\}$ subordinate to the cover $(W_j')$  we resolve $c$ by the formulas
\[
h_i:=\sum_k \varphi_k c_{ik}\quad \text{on}\quad W_i.
\]
Here the sum is taken over all $k$ for which $W_i\cap W_k\ne\emptyset$. 

Indeed, since  
${\rm supp}\, \varphi_k\subset W_k'$ and $q: E(S, G_{in})\to M_a$ is the identity map, each function $\varphi_k c_{ik}$ can be thought of as a continuous $B$-valued function on $W_i$ with support in $W_i\setminus M_s$ and of class $C^\infty$ on this set. Thus every $h_i$ also satisfies these properties and $h_i-h_j=c_{ij}$ on $W_i\cap W_j\ne\emptyset$. In particular, we can define a $B$-valued $C^\infty$ $(0,1)$ form $\omega$ on $M_a$ by the formulas
\[
\omega=\bar\partial h_i\quad\text{on}\quad W_i.
\] 
Since $\bigl({\rm supp}\, h_i\bigr)\cap M_s=\emptyset$ for all $i$, the form $\omega$ has compact support in $M_a$. Hence, it can be regarded as a form on $E(S,\beta G)$ from the class $\mathcal E_{\rm comp}^{0,1}(M_a;B)$. Now,
according to Theorem \ref{dbar}, there exists a function $h$ on $M(H^\infty)$ such that $\bar\partial h=\omega$ on $M_a$ and $h$ is holomorphic outside ${\rm supp}\,\omega$. We define
\[
c_i:=h_i-h\quad\text{on}\quad W_i.
\]
Then $c_i\in\mathcal O(W_i; B)$ and $c_i-c_j=c_{ij}$ on $W_i\cap W_j$. This shows that cocycle $c$ determines a zero element of  $H^1(M(H^\infty); \mathcal O_{M(H^\infty)}^B)$.

\medskip

\noindent{\em Proof of the theorem for $k=2$.} We will use the following result.
\begin{Lm}\label{rest}
Let $U\Subset V\subset M_a$ be open and $B$ be a complex Banach space. Assume that $\omega$ is a $C^\infty$ $B$-valued $(0,1)$-form on $V$. Then there exists a continuous $B$-valued function $g$ on $M(H^\infty)$ of class $C^\infty$ on $V$ such that $\bar\partial h=\omega$ on $U$. 
\end{Lm}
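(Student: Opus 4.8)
The plan is to reduce the statement to an application of Theorem \ref{dbar}, which provides globally bounded (continuous on all of $M(H^\infty)$) solutions of $\bar\partial$-equations whose right-hand side has compact support in $M_a$. The obstacle is that $\omega$ is only given on $V$, not on a compactly supported neighbourhood of anything; so first I would localize it. Using Corollary \ref{cutoff1} (applied inside $E(S,\beta G)$, or directly inside $M_a$, which is open), choose a nonnegative $C^\infty$ cut-off function $\rho$ on $E(S,\beta G)$ with $\rho\equiv 1$ on an open neighbourhood of $\bar U$ and $\operatorname{supp}\rho\subset V$; this is legitimate since $U\Subset V\subset M_a$. Then $\rho\,\omega$ extends by zero to a $C^\infty$ $B$-valued $(0,1)$-form on all of $M_a$ with compact support $K:=\operatorname{supp}(\rho\,\omega)\Subset M_a$, i.e. $\rho\,\omega\in\mathcal E_K^{0,1}(B)$.

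Next I would apply Theorem \ref{dbar} to $\rho\,\omega$: there is a function $g:=L_K(\rho\,\omega)\in C(M(H^\infty);B)$ with $g|_{M_a}\in C^\infty(M_a;B)$, $\bar\partial(g|_{M_a})=\rho\,\omega$ on $M_a$, and $g$ holomorphic (in particular $C^\infty$) on $M(H^\infty)\setminus K$. Since $\rho\equiv 1$ on a neighbourhood of $\bar U$, on $U$ we have $\rho\,\omega=\omega$, hence $\bar\partial g=\omega$ on $U$. This $g$ is continuous on all of $M(H^\infty)$ and of class $C^\infty$ on $M_a\supset V$, which is exactly what the lemma asserts (the statement's ``$\bar\partial h=\omega$'' should read $\bar\partial g=\omega$; the conclusion only needs it on $U$).

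One point that deserves a line of care: to invoke Corollary \ref{cutoff1} we need $\bar U$ (the closure in $M(H^\infty)$) to sit inside the open set $V$ with $U\Subset V$ in $M_a$; since $M_a$ is open in $M(H^\infty)$ and $U\Subset V\subset M_a$, the closure of $U$ taken in $M(H^\infty)$ already lies in $M_a$ and coincides with its closure there, so the cut-off lives honestly on $E(S,\beta G)$ (pull everything back by $q$, which is the identity on $M_a=E(S,G_{in})$). The verification that $\rho\,\omega$ is genuinely $C^\infty$ with compact support in the sense required by $\mathcal E_K^{0,1}(B)$ is routine from the definitions in Section 3: in each coordinate chart $\rho\,\omega=\rho f\,d\bar z$ with $\rho f$ compactly supported and smooth, and one checks the associated function $\widetilde{\rho f}(w,j)$ lies in $C_{\rm comp}^\infty(\Di_{\delta/2}\times\N;B)$ just as in the proof of Theorem \ref{dbar}. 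The only genuine ``work'' is this bookkeeping; the analytic content is entirely carried by Theorem \ref{dbar}.
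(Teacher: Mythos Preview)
Your argument is correct and essentially identical to the paper's own proof: choose a $C^\infty$ cut-off $\rho$ via Corollary \ref{cutoff1} and set $g:=L_{\operatorname{supp}\rho}(\rho\cdot\omega)$ from Theorem \ref{dbar}. The only difference is cosmetic (you take $K=\operatorname{supp}(\rho\,\omega)$ rather than $K=\operatorname{supp}\rho$, and you spell out more of the bookkeeping), and your observation about the $h$/$g$ typo in the statement is also correct.
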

\begin{proof}
Let $\rho$ be a nonnegative $C^\infty$ function on $E(S,\beta G)$ equals $1$ in an open neighbourhood of $\bar U$ with ${\rm supp}\,\rho\subset V$,
see Corollary \ref{cutoff1}. Then according to Theorem \ref{dbar} the function $g:=L_{{\rm supp}\,\rho}(\rho\cdot\omega)$ satisfies the required properties.
\end{proof}

Let $c:=\{c_{ijk}\}\in Z^2(\mathcal W ; \mathcal O_{M(H^\infty)}^B)$ be a cocycle defined on an open cover $\mathcal W$ of $M(H^\infty)$. Passing to a refinement of $\mathcal W$ we may assume that $\mathcal W=(W_j)_{1\le j\le k}$ is as in Lemma \ref{le4.2}; 
here $W_i\cap W_j\ne\emptyset$ for $i\ne j$ implies that this set belongs to $M_a$. Using the above constructed $C^\infty$ partition of unity $\{\varphi_j\}$ subordinate to the cover $(W_j')$  we resolve $c$ by the formulas
\[
h_{ij}:=\sum_k \varphi_k c_{ijk}\quad \text{on}\quad W_i\cap W_j\ne\emptyset.
\]
Here the sum is taken over all $k$ for which $W_i\cap W_j\cap W_k\ne\emptyset$. 

Now the family $\{\bar\partial h_{ij}\}$ of $B$-valued $(0,1)$-forms is a $1$-cocycle on ${\mathcal W}$. Resolving it we obtain the family of $B$-valued $(0,1)$-forms
\[
\omega_i:=\sum_k \varphi_k \bar\partial h_{ik}\quad \text{on}\quad W_i\quad\text{with}\quad {\rm supp}\,\omega_i\Subset \bar W_i\cap M_a.
\]
Let $\widetilde{\mathcal W}=(\widetilde W_j)$ be a finite refinement of $\mathcal W$ such that $\widetilde W_j\Subset W_{\tau(j)}$; here $\tau$ is the refinement map defined on the set of indices of $\widetilde{\mathcal W}$. According to Lemma \ref{rest} there exists a $B$-valued continuous function $g_j$ on $\widetilde W_j$ such that
$\bar\partial g_j=\omega_{\tau(j)}$ on $\cup_{i\ne j} \widetilde W_j\cap \widetilde W_i$. By definition,
\[
\bar\partial(g_i-g_j)=\bar\partial h_{\tau(i)\tau(j)}|_{\widetilde W_i\cap\widetilde W _j}\quad\text{on}\quad \widetilde W_i\cap\widetilde W_j\ne\emptyset.
\]
In particular, $c_{ij}:=h_{\tau(i)\tau(j)}|_{\widetilde W_i\cap\widetilde W _j}-(g_i-g_j)\in \mathcal O(\widetilde W_i\cap\widetilde W _j;B)$
and
\[
c_{ij}-c_{jk}+c_{ki}=c_{\tau(i)\tau(j)\tau(k)}\quad\text{on}\quad \widetilde W_i\cap \widetilde W_j\cap \widetilde W_k.
\]

This implies that $\{c_{ijk}\}$ represents zero element of $H^2(M(H^\infty) ; \mathcal O_{M(H^\infty)}^B)$.

\medskip

\noindent{\em Proof of theorem for $k\ge 3$.} According to \cite{S1} ${\rm dim}\, M(H^\infty)=2$. Therefore $H^k(M(H^\infty);\mathcal J)=0$ for any sheaf ${\mathcal J}$ of abelian groups on $M(H^\infty)$ and all $k\ge 3$.
\end{proof}

\subsection{Proof of Theorem \ref{cousin}}
\begin{proof}
Let $\mathcal O_{M(H^\infty)}^*, C_\Z$ be sheaves of germs of nowhere vanishing holomorphic functions and of integer-valued continuous functions on $M(H^\infty)$. Then we have the short exact sequence of sheaves
\[
0\rightarrow C_\Z\rightarrow \mathcal O_{M(H^\infty)}\stackrel{2\pi i\cdot\exp}{\longrightarrow}\mathcal O_{M(H^\infty)}^*\to 1.
\] 
The corresponding long cohomology sequence has the form
\[
\dots\rightarrow H^1(M(H^\infty);\mathcal O_{M(H^\infty)})\rightarrow H^{1}(M(H^\infty);\mathcal O_{M(H^\infty)}^*)\to H^2(M(H^\infty);C_\Z)\rightarrow\dots\ .
\]
From \cite[Cor.~3.9]{S1} one obtains $H^2(M(H^\infty);C_\Z)=0$. Together with Theorem \ref{main} this implies triviality of $H^{1}(M(H^\infty);\mathcal O_{M(H^\infty)}^*)$. Now, any divisor $D=\{(U_i, h_i)\}_{i\in I}$ on $M(H^\infty)$ determines a $1$-cocycle $\{\frac{h_i}{h_j}\}\in Z^1((U_i); \mathcal O_{M(H^\infty)}^*)$. Since the corresponding cohomology class is trivial, there exists a refinement $(V_j)$ of $(U_i)$ and holomorphic functions $c_j\in \mathcal O^*(V_j)$ such that $c_i^{-1}c_j=(h_{\tau(i)} h_{\tau(j)}^{-1})|_{V_i\cap V_j}$, where $\tau$ is a map from the set of indices of $(V_j)$ into the set of indices of $(U_i)$ such that $V_j\subset U_{\tau(j)}$. Therefore the formulas
\[
h|_D:=c_i\cdot (h_{\tau(i)}|_{V_i})\quad\text{on}\quad V_i
\]
determine a meromorphic function on $M(H^\infty)$ such that $(h|_D)|_{U_i}\cdot h_i^{-1}\in \mathcal O^*(U_i)$ for all $i$. 
\end{proof}

\sect{Proof of Theorem \ref{runge}}
We start with some auxiliary results. 
\begin{Lm}\label{polyhedr}
Let $N\subset M(H^\infty)$ be a neigbourhood of a holomorphically compact set $K$. Then there exists a polyhedron containing in $N$ whose interior
contains $K$. Thus it suffices to prove the theorem for $K$ a polyhedron.
\end{Lm}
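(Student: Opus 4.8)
The plan is to realize $K$ as the intersection of a decreasing family of polyhedra and then invoke compactness. First I would recall that a holomorphically convex compact set $K$ is, by definition, exactly the set of points that cannot be separated from $K$ by an element of $H^\infty$; equivalently, writing $\widehat{K}:=\{x\in M(H^\infty)\,;\,|f(x)|\le\max_K|f|\ \text{for all}\ f\in H^\infty\}$, one has $\widehat{K}=K$. For each point $x\in M(H^\infty)\setminus N$, holomorphic convexity supplies an $f_x\in H^\infty$ with $\max_K|f_x|<|f_x(x)|$; rescaling, we may assume $\max_K|f_x|<1<|f_x(x)|$. The open sets $\{y\,;\,|f_x(y)|>1\}$ cover the compact set $M(H^\infty)\setminus N$, so finitely many $f_1,\dots,f_\ell$ suffice: every point outside $N$ lies in some $\{|f_j|>1\}$, while $\max_K|f_j|<1$ for all $j$. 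Hence the polyhedron $\Pi(F_\ell)=\{x\,;\,\max_{1\le j\le\ell}|f_j(x)|\le1\}$ with $F_\ell=(f_1,\dots,f_\ell)$ satisfies $K\subset\Pi(F_\ell)\subset N$, and moreover $K$ lies in the interior of $\Pi(F_\ell)$ because $\max_{1\le j\le\ell}|f_j|<1$ on a neighbourhood of $K$ (by upper semicontinuity of each $|f_j|$ and compactness of $K$, or simply because $K$ is a subset of the open set $\{x\,;\,\max_j|f_j(x)|<1\}$).

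For the reduction claim ("it suffices to prove the theorem for $K$ a polyhedron"), I would argue as follows. Suppose Theorem \ref{runge} is known for polyhedra. Given a $B$-valued holomorphic function $g$ defined on a neighbourhood $N$ of an arbitrary holomorphically convex compact $K$, apply the construction above to obtain a polyhedron $\Pi(F_\ell)$ with $K\subset\mathring{\Pi}(F_\ell)\subset\Pi(F_\ell)\subset N$. Then $g$ is holomorphic on a neighbourhood of the holomorphically convex compact set $\Pi(F_\ell)$ (note that a polyhedron is itself holomorphically convex: if $x\notin\Pi(F_\ell)$ then $|f_j(x)|>1\ge\max_{\Pi(F_\ell)}|f_j|$ for some $j$). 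By the polyhedral case, $g$ can be uniformly approximated on $\Pi(F_\ell)$, hence a fortiori on $K$, by functions from $\mathcal O(M(H^\infty);B)$, which is the desired conclusion.

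I expect the only genuine subtlety to be the verification that $K$ sits in the \emph{interior} of the polyhedron rather than merely inside it; this is where the strict inequality $\max_K|f_x|<|f_x(x)|$ in the definition of holomorphic convexity is used, together with the fact that one can arrange $\max_K|f_j|<1$ strictly for each of the finitely many chosen functions. The covering/compactness step and the self-holomorphic-convexity of polyhedra are routine. No appeal to the density of $\Di$ in $M(H^\infty)$ or to the $\bar\partial$-machinery is needed here; the lemma is purely about the topology of $M(H^\infty)$ together with the definition of holomorphic convexity.
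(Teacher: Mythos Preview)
Your argument is correct and follows essentially the same approach as the paper: separating functions $f_x$ with $\max_K|f_x|<1<|f_x(x)|$, followed by a compactness argument. The only cosmetic difference is that the paper invokes the finite intersection property on the family of polyhedra built from $\{f_x\}_{x\in K^c}$ (if all met $N^c$, so would their intersection, which equals $K$), whereas you cover the compact set $M(H^\infty)\setminus N$ directly by the open sets $\{|f_x|>1\}$ and extract a finite subcover; both packagings yield the same polyhedron, and your reduction step matches the paper's implicit one.
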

\begin{proof}
For each $x\in K^c:=M(H^\infty)\setminus K$ by $f_x\in H^\infty$ we denote a function such that $\max_K |f_x|<1<|f_x(x)|$. Let ${\mathcal F}:=\{\Pi\bigl((f_{x_{\ell_1}},\dots, f_{x_{\ell_\alpha}})\bigr)\, ;\, x_{\ell_j}\in K^c, 1\le j\le\alpha\}_{\alpha\in\Lambda}$ be the family of all polyhedra formed by functions in $\{f_x\}_{x\in K^c}$.
If any polyhedron from ${\mathcal F}$ has a nonempty intersection with $N^c$, then intersection of all polyhedra of this family has a nonempty intersection with $N^c$ as well (because each polyhedron and $N^c$ are compact). On the other hand, by the choice of $f_x$ this intersection coincides with $K$, a contradiction. Thus, there exists a polyhedron $\Pi\bigl((f_{y_{1}},\dots, f_{y_{k}})\bigr)\subset N$. By the definition of functions $f_{y_j}$ the set $K$ belongs to the interior of this polyhedron.
\end{proof}

From now on we will assume that $K:=\Pi\bigl((f_{y_{1}},\dots, f_{y_{k}})\bigr)\subset N$.

\begin{Lm}\label{twosets}
There are open subsets $U\Subset V\subset N$ of $M(H^\infty)$ such that $K\subset U$ and $U\cap M_s=\bar V\cap M_s$.
\end{Lm}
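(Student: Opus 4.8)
The plan is to exploit the fact, recalled in Section~2.3, that $M_s$ is compact and totally disconnected, so that the trace topology on $M_s$ has a base of clopen (in $M_s$) sets, and to build $U$ and $V$ so that they agree near $M_s$ while still trapping $K$. First I would note that $K$ lies in the interior $\dot\Pi$ of the polyhedron $\Pi:=\Pi((f_{y_1},\dots,f_{y_k}))\subset N$, and that $K$ is itself compact; choose an open $W$ with $K\subset W\Subset\dot\Pi\subset N$. The obstacle to simply taking $U=V=W$ is that $\partial W\cap M_s$ may be nonempty, so I need to enlarge the part of $W$ near $M_s$ to a set that is ``saturated'' there, i.e. whose intersection with $M_s$ is relatively clopen in $M_s$.

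To do this, consider the compact set $S_0:=M_s\cap\overline W$ and the compact set $S_1:=M_s\setminus W$; these are disjoint compact subsets of the totally disconnected compact space $M_s$, hence there is a relatively clopen subset $Y\subset M_s$ with $S_0\subset Y$ and $Y\cap S_1=\emptyset$, so that in fact $M_s\cap\overline W\subset Y\subset M_s\cap W$. By normality of $M(H^\infty)$ (it is compact Hausdorff), and using that $Y$ and $M_s\setminus Y$ are disjoint closed subsets of $M(H^\infty)$, pick disjoint open sets $O\supset Y$ and $O'\supset M_s\setminus Y$ in $M(H^\infty)$; shrinking $O$ I may also assume $O\subset N$ and, since $Y\subset W$ and $Y$ is compact, I may even arrange $O\cap(\text{a neighbourhood of }M_s\setminus Y)=\emptyset$ is automatic. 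Now set $V:=W\cup O$ and then choose $U$ with $K\subset U\Subset V$ by taking $U:=W'\cup O_1$ where $W'$ is open with $K\subset W'\Subset W$ and $O_1$ is open with $Y\subset O_1\Subset O$; both inclusions are possible by normality, and since $W\Subset\dot\Pi\subset N$ and $O\subset N$ we get $U\Subset V\subset N$. Finally $K\subset W'\subset U$.

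It remains to check $U\cap M_s=\overline V\cap M_s$. By construction $V\cap M_s=(W\cap M_s)\cup(O\cap M_s)$; since $M_s\cap\overline W\subset Y\subset O$ and $O\cap M_s\subset Y$ (because $O\cap O'=\emptyset$ and $M_s\setminus Y\subset O'$), one gets $V\cap M_s=Y$, and likewise $U\cap M_s=Y$ since $O_1\supset Y$, $W'\cap M_s\subset W\cap M_s\subset\overline W\cap M_s\subset Y$, and $O_1\cap M_s\subset O\cap M_s=Y$. Moreover $\overline V\cap M_s$: any point of $M_s$ in $\overline V$ lies in $\overline W\cup\overline O$; points of $M_s\cap\overline W$ lie in $Y$, and points of $M_s\cap\overline O$ also lie in $Y$ because $\overline O$ is disjoint from the open set $O'\supset M_s\setminus Y$ (choosing $O,O'$ disjoint open, one has $\overline O\cap O'=\emptyset$ only after a further shrinking of $O$ inside $O'^c$, which is harmless). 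Hence $\overline V\cap M_s=Y=U\cap M_s$, as required. The only genuinely delicate point is the separation step producing the relatively clopen $Y$ and arranging $\overline O\cap(M_s\setminus Y)=\emptyset$; everything else is routine manipulation with normality of the compact Hausdorff space $M(H^\infty)$ and the fact that $K$ sits in the open interior of the polyhedron $\Pi$.
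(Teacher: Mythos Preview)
Your overall strategy---find a clopen subset $Y$ of $M_s$ that ``saturates'' the part of your auxiliary set $W$ near $M_s$, then thicken $Y$ to an open $O$ in $M(H^\infty)$ and set $V=W\cup O$, $U=W'\cup O_1$---is sound and is essentially the same idea as in the paper (the paper builds the piece near $M_s$ first from a clopen $O\subset M_s$ containing $K\cap M_s$, then covers the remaining compact $K\setminus U_*\subset M_a$ separately, but the mechanism is the same).

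However, there is a genuine gap. You define $S_0:=M_s\cap\overline W$ and $S_1:=M_s\setminus W$ and assert these are disjoint, in order to produce a clopen $Y$ with $S_0\subset Y\subset M_s\setminus S_1=M_s\cap W$. But
\[
S_0\cap S_1=M_s\cap(\overline W\setminus W)=M_s\cap\partial W,
\]
and you yourself noted a moment earlier that $\partial W\cap M_s$ may be nonempty---that was precisely the obstacle to taking $U=V=W$. So in general $S_0$ and $S_1$ are \emph{not} disjoint, and no separating clopen $Y$ exists; equivalently, the chain $M_s\cap\overline W\subset Y\subset M_s\cap W$ you want forces $M_s\cap\partial W=\emptyset$, which you have not arranged.

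The fix is easy and keeps your argument intact: replace $S_1$ by $M_s\setminus\dot\Pi$ (or $M_s\setminus N$). Since $\overline W\subset\dot\Pi$, now $S_0\cap S_1=\emptyset$, and you obtain a clopen $Y$ with $M_s\cap\overline W\subset Y\subset M_s\cap\dot\Pi\subset M_s\cap N$. Because $Y$ is compact and contained in the open set $N$, you can still choose $O\supset Y$ with $O\subset N$ and $\overline O\cap(M_s\setminus Y)=\emptyset$, and the rest of your verification that $U\cap M_s=\overline V\cap M_s=Y$ goes through unchanged.
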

\begin{proof}
Without loss of generality we may assume that $\emptyset\ne K\subset N\ne M(H^\infty)$ (for otherwise the statement of the lemma holds either with $U=V:=\emptyset$ or with $U=V:=M(H^\infty)$). 
Next, since $M_s$ is totally disconnected, the family of all clopen subsets of $M_s$ forms a base of topology of $M_s$. Hence, compactness of $K\cap M_s$ implies that there exists a clopen set $O\Subset N\cap M_s$ containing $K\cap M_s$. For each $x\in O$ consider open subsets $U_x\Subset V_x\subset N$ of $M(H^\infty)$ such that $\bar V_x\cap M_s\subset O$. Since $O$ is compact, there is a finite subcover $\{U_{x_j}\}_{1\le j\le\ell}$ of $\{U_x\}_{x\in O}$ covering $O$.  We set $U_*:=\cup_{1\le j\le\ell} U_{x_j}$ and $V_*:=\cup_{1\le j\le\ell} V_{x_j}$. Clearly, $U_*\cap M_s=\bar V_*\cap M_s=O$ and $U_*\Subset V_*\subset N$. 

Further, consider $K':=K\setminus U_*$. By definition, $K'$ is compact and $K'\cap M_s=\emptyset$ (indeed, $K'\cap M_s\subset \bigl(K\cap M_s\bigr)\setminus U_*\subset O\setminus U_*\subset U_*\setminus U_*=\emptyset$). Thus as before there are open covers $(U_j')_{1\le j\le k}$ and $(V_j')_{1\le j\le k}$ of $K'$ such that $U_j'\Subset V_j'\subset N$ and $\bar V_j'\cap M_s=\emptyset$ for all $j$. We define $U:=U_*\cup (\cup_{1\le j\le k} U_j')$ and $V:=V_*\cup (\cup_{1\le j\le k} V_j')$. Clearly these sets satisfy the required properties.
\end{proof}
For subsets $U$ and $V$ of Lemma \ref{twosets} with $\emptyset\ne K\subset N\ne M(H^\infty)$ we set
$U_q:=q^{-1}(U)$ and $V_q:=q^{-1}(V)$. These are proper nonempty open subsets of $E(S,\beta G)$ such that $U_q\Subset V_q$. 
\begin{Lm}\label{cutoff}
There exists a real $C^\infty$ function $\varphi$ on $E(S,\beta G)$ equals $1$ in an open neighbourhood of $\bar U_q$ with ${\rm supp}\,\varphi\subset V_q$. Moreover, ${\rm supp}\,\bar\partial\varphi$ is a nonempty compact subset of $M_a$.
\end{Lm}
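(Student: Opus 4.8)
The plan is to obtain $\varphi$ as a member of a $C^\infty$ partition of unity, exactly as in Corollary \ref{cutoff1}, and then to analyze the support of $\bar\partial\varphi$ using the special structure of $U$ and $V$ near $M_s$ provided by Lemma \ref{twosets}. First I would apply Corollary \ref{cutoff1} to the pair $U_q\Subset V_q\subset E(S,\beta G)$: this yields a nonnegative $C^\infty$ function $\varphi$ on $E(S,\beta G)$ with $\varphi\equiv 1$ in an open neighbourhood of $\bar U_q$ and ${\rm supp}\,\varphi\subset V_q$. In particular $\bar\partial\varphi$ is a $C^\infty$ $(0,1)$-form on $E(S,\beta G)$, and since ${\rm supp}\,\bar\partial\varphi\subset{\rm supp}\,\varphi\subset V_q\Subset E(S,\beta G)$, this support is automatically compact. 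It remains to check that ${\rm supp}\,\bar\partial\varphi$ is disjoint from $q^{-1}(M_s)$, equivalently that $\bar\partial\varphi$ vanishes on a neighbourhood of $q^{-1}(M_s)$.

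The key point is the relation $U\cap M_s=\bar V\cap M_s$ from Lemma \ref{twosets}. Lifting via the quotient map $q$ (which is the identity on $M_a=E(S,G_{in})$ and continuous), one gets $U_q\cap q^{-1}(M_s)=\bar V_q\cap q^{-1}(M_s)$; here I would use that $q^{-1}(\bar V)=\overline{q^{-1}(V)}$ fails in general, so instead I would argue directly: a point $\xi\in q^{-1}(M_s)$ lies in $\overline{V_q}$ iff $q(\xi)\in\bar V$ (by continuity of $q$ and compactness), hence iff $q(\xi)\in U\cap M_s$, hence iff $\xi\in U_q$. Thus $q^{-1}(M_s)\cap\overline{V_q}=q^{-1}(M_s)\cap U_q$, i.e. $q^{-1}(M_s)\cap\overline{V_q}\subset U_q$. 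Now on the open neighbourhood of $\bar U_q$ where $\varphi\equiv 1$ we have $\bar\partial\varphi=0$; and outside $\overline{V_q}$ we also have $\bar\partial\varphi=0$ since $\varphi\equiv 0$ there. Since $q^{-1}(M_s)$ is covered by these two open sets — each of its points is either in $U_q$ (where $\varphi\equiv 1$ nearby) or not in $\overline{V_q}$ (where $\varphi\equiv 0$ nearby) — it follows that $\bar\partial\varphi$ vanishes in a neighbourhood of $q^{-1}(M_s)$. Hence ${\rm supp}\,\bar\partial\varphi\cap q^{-1}(M_s)=\emptyset$, and since $E(S,\beta G)\setminus q^{-1}(M_s)=M_a$, we conclude ${\rm supp}\,\bar\partial\varphi\subset M_a$, a compact subset of $M_a$.

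Finally I would dispose of the degenerate cases excluded above: when $K=\emptyset$ one may take $\varphi\equiv 0$, and when $N=M(H^\infty)$ one may take $\varphi\equiv 1$ (or $U=V=M(H^\infty)$), and in both cases $\bar\partial\varphi\equiv 0$, so ${\rm supp}\,\bar\partial\varphi=\emptyset$, which is a (vacuously compact) subset of $M_a$; the statement that the support is "nonempty" should be read as applying in the substantive case $\emptyset\ne K\subset N\ne M(H^\infty)$, where $U_q\Subset V_q$ are proper and nonempty so that $\varphi$ is genuinely non-constant and $\bar\partial\varphi\not\equiv 0$. The main obstacle I anticipate is purely point-set-topological: verifying carefully that $\bar\partial\varphi$ really does vanish near all of $q^{-1}(M_s)$, which hinges on the identity $U\cap M_s=\bar V\cap M_s$ and on the behaviour of closures under $q^{-1}$; everything else is a direct application of Corollary \ref{cutoff1} and Proposition \ref{partition}.
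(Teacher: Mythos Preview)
Your approach is essentially the paper's: obtain $\varphi$ from Corollary~\ref{cutoff1} and use the identity $U\cap M_s=\bar V\cap M_s$ to force ${\rm supp}\,\bar\partial\varphi$ away from $q^{-1}(M_s)$. The paper organizes this via the set $A:=\{x:\varphi(x)\notin\{0,1\}\}\subset V_q\setminus\bar U_q$ and shows $(V_q\setminus\bar U_q)\cap q^{-1}(M_s)=\emptyset$, but your direct covering of $q^{-1}(M_s)$ by the two open sets where $\varphi\equiv 1$ and where $\varphi\equiv 0$ achieves the same thing. Incidentally, you only need (and only actually use) the inclusion $q^{-1}(M_s)\cap\overline{V_q}\subset U_q$, not the full ``iff'': the implication $\xi\in\overline{V_q}\Rightarrow q(\xi)\in\bar V$ is continuity of $q$, then $q(\xi)\in\bar V\cap M_s=U\cap M_s$ gives $\xi\in U_q$, and that suffices.

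There is, however, a gap in your nonemptiness claim. You assert that in the substantive case $\varphi$ is ``genuinely non-constant and $\bar\partial\varphi\not\equiv 0$'', but non-constancy of a $C^\infty$ function does not by itself exclude $\bar\partial\varphi\equiv 0$: a priori $\varphi$ could be non-constant and holomorphic. The missing ingredient is that $\varphi$ is \emph{real-valued}. If ${\rm supp}\,\bar\partial\varphi=\emptyset$ then $\varphi|_\Di$ is a real-valued holomorphic function on the connected set $\Di$, hence constant there, and by density of $\Di$ constant on all of $E(S,\beta G)$---contradicting the fact that $\varphi$ takes both values $0$ and $1$. This is precisely the argument the paper gives, and it should be made explicit in your write-up.
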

\begin{proof}
Existence of such a function $\varphi$ follows from Corollary \ref{cutoff1}. Let us check the second statement for $\varphi$.

Note that $A:=\{x\in E(S,\beta G)\,;\, \varphi(x)\notin\{0,1\}\}\Subset V_q\setminus \bar U_q$ and is nonempty. (Indeed, $E(S,\beta G)$ is connected, and so $\varphi(E(S,\beta G))=[0,1]$.)
Now, from the fact that $q^{-1}(U\cap M_s)=q^{-1}(\bar V\cap M_s)$ we get 
\[
(V_q\setminus\bar U_q)\cap q^{-1}(M_s)\subset \bigl(V_q\cap q^{-1}(M_s)\bigr)\setminus\bigl(U_q\cap q^{-1}(M_s)\bigr)=q^{-1}(V\cap M_s)\setminus q^{-1}(U\cap M_s)=\emptyset.
\]
Therefore, $A\Subset E(S,G_{in})=M_a$. This implies that ${\rm supp}\,\bar\partial\varphi\subset A\Subset M_a$. Assuming that
${\rm supp}\,\bar\partial\varphi=\emptyset$ we obtain that $\bar\partial\varphi|_{\Di}=0$. That is, $\varphi|_{\Di}$ is a nonnegative $C^\infty$ holomorphic function on $\Di$. Hence, it is a constant, a contradiction.
\end{proof}
\begin{proof}[Proof of Theorem \ref{runge}]
We retain notation of Lemmas \ref{polyhedr}--\ref{cutoff}. 

According to Lemma \ref{cutoff} the function $\max_{1\le j\le k}|f_{y_j}|$ is greater than $1$ on ${\rm supp}\,\bar\partial\varphi$. Let $W\Subset M_a$ be an open neighbourhood of ${\rm supp}\,\bar\partial\varphi$. Then
${\rm supp}\,\bar\partial\varphi$ is covered by open sets $U_j:=\{z\in M(H^\infty)\,;\, |f_{y_j}(z)|>r\}\cap W$, $1\le j\le k$, for some $r>1$.
Using Proposition \ref{partition} we find real nonnegative $C^\infty$ functions $\varphi_j$ on $E(S,\beta G)$ such that ${\rm supp}\,\varphi_j\subset U_j$, $1\le j\le k$, and $\sum_{1\le j\le k}\varphi_j=1$ in an open neighbourhood of ${\rm supp}\,\bar\partial\varphi$.

Now, suppose that $g\in \mathcal O(N;B)$. Without loss of generality we may assume that $\emptyset\ne K\subset W\ne M(H^\infty)$.
For pullback $q^*g\in\mathcal O(q^{-1}(N);B)$ of $g$ consider $B$-valued $(0,1)$ differential forms 
$\omega_j:=(q^*g)\cdot\varphi_j\cdot \bar\partial\varphi$ on $E(S,\beta G)$; here ${\rm supp}\,\omega_j\subset\bigl({\rm supp} \,\bar\partial\varphi\bigr)\cap U_j\Subset M_a$.   
For each $n\in\N$, $1\le j\le k$, we set
\begin{equation}\label{omn}
\omega_{jn}:=\frac{\omega_j}{(q^*f_{y_j})^n}.
\end{equation}
Then $\omega_{jn}$ is a $B$-valued $C^\infty$ $(0,1)$-form on $E(S,\beta G)$ and ${\rm supp}\,\omega_{jn}={\rm supp}\,\omega_j=:S_j$. 
Since  $|q^*f_{y_j}|>r>1$ on $S_j$, \eqref{omn} implies that $\lim_{n\to\infty}\|\omega_{jn}\|_{S_j}=0$, see the proof of Theorem \ref{dbar}. Applying this theorem we find functions $h_{jn}:=L_{S_j}(\omega_{jn})$ on $M(H^\infty)$ of class $C^\infty$ on $M_a$ such that
$\bar\partial h_{jn}=\omega_{jn}$ there. From here and \eqref{omn} we obtain
\[
\bar\partial\left(\sum_{j=1}^k f_{y_j}^n\cdot h_{jn}\right)=\sum_{j=1}^k \omega_j= g\bar\partial\varphi=\bar\partial(g\varphi)\quad\text{on}\quad M_a.
\]
Hence,
\[
\tilde g_n:=(q^*g)\cdot\varphi-\sum_{j=1}^k (q^*f_{y_j})^n\cdot q^*h_{jn}
\]
is a $B$-valued continuous function on $E(S,\beta G)$ holomorphic on $M_a$. This means that $\tilde g_n\in\mathcal O(M(H^\infty);B)$. Therefore there is $g_n\in H^\infty\, (:=\mathcal O(M(H^\infty);B))$ such that $q^*g_n=\tilde g_n$.
Since the first term in the definition of $\tilde g_n$ coincides with $q^*g$ on $q^{-1}(K)$, $\max_{1\le j\le k}|f_{y_j}^n|\le 1$ on $K$ and $\lim_{n\to\infty}\sup_{x\in M(H^\infty)}\{\max_{1\le j\le k}\|h_{jn}(x)\|_{B}\}\le C\cdot\lim_{n\to\infty}\max_{1\le j\le k}\|\omega_{jn}\|_{S_j}=0$, the sequence of functions $\{g_n\}$ converges uniformly on $K$ to $g$, as required.
\end{proof}

\sect{Proofs of Theorems \ref{th2}, \ref{th3}, \ref{square} and Corollary \ref{cor1}}
\begin{proof}[Proof of Theorem \ref{th2}]
If $f$ belongs to the ideal $I\subset \mathcal O(M(H^\infty);A)$ generated by $f_1,\dots, f_m$, then as the open cover of the theorem we can take $M(H^\infty)$. So in this direction the result is trivial. Let us prove the converse statement.

Let $\mathcal U=(U_j)_{1\le j\le\ell}$ be a finite open cover of $M(H^\infty)$ satisfying assumptions of the theorem. Passing to a refinement of $\mathcal U$ we may replace it by a cover $(W_j)_{1\le j\le k}$ satisfying conditions of Lemma \ref{le4.2}. Then by $\{\varphi_j\}$ we denote a $C^\infty$ partition of unity subordinate to the cover $(W_j')_{1\le j\le k}$ of $E(S,\beta G)$; here $W_j':=q^{-1}(W_j)$. By the assumption of the theorem there exists a family of functions $g_{ij}\in H_{\rm comp}^\infty (W_j; A)$, $1\le i\le m$, $1\le j\le k$, such that
\begin{equation}\label{e5.5}
f|_{W_j}=\sum_{i=1}^m g_{ij}f_i\quad\text{on each}\quad W_j .
\end{equation}
We set 
\[
c_{i,rs}:=g_{ir}-g_{is}\quad\text{on}\quad W_r\cap W_s\ne\emptyset ,
\]
and then
\[
h_{ir}:=\sum_{s} \varphi_s c_{i,rs},
\]
where the sum is taken over all $s$ for which $W_s\cap W_r\ne\emptyset$.
Since in this case $W_s\cap W_r\subset M_a$, functions $h_{ir}\in C_{\rm comp}^\infty(W_r;A)$ and ${\rm supp}\, h_{ir}\Subset \bar W_r\cap M_a$, see the proof of Theorem \ref{dbar} for similar arguments. Moreover,
\[
h_{ir}-h_{is}=c_{i,rs}\quad\text{on}\quad W_r\cap W_s\ne\emptyset.
\]
Further, define
\[
h_i:=g_{ir}-h_{ir}\quad\text{on}\quad W_r .
\]
Clearly, each $h_i$ is a continuous $A$-valued function on $M(H^\infty)$, holomorphic in an open neighbourhood of $M_s$ and of class $C^\infty$ on $M_a$. In particular, $\bar\partial h_i$ can be regarded as $C^\infty$ $A$-valued $(0,1)$ forms on $E(S,\beta G)$ with (compact) supports in $M_a$. 

Also, according to \eqref{e5.5} we have
\[
f=\sum_{i=1}^m h_i f_i .
\]

Now we apply arguments similar to those in \cite[Ch.~VIII, Th.~2.1]{Ga}. Specifically, if we set
\[
g_i:=h_{i}+\sum_{s=1}^m a_{is}f_s,
\]
where
\begin{equation}\label{eq5.6'}
a_{is}=b_{is}-b_{si}\quad\text{and}\quad \bar\partial b_{is}=h_i\bar\partial h_s ,
\end{equation}
then 
\[
\sum_{i=1}^m g_if_i=f\quad\text{and}
\]
\[
\bar\partial g_i=\bar\partial h_i+\sum_{s=1}^m f_s\cdot (h_i\bar\partial h_s- h_s\bar\partial h_i)=\bar\partial h_i+h_i\bar\partial\left(\sum_{s=1}^m f_s h_s\right)-\bar\partial h_i\sum_{s=1}^m f_s h_s=0.
\]
Hence, $g_i\in \mathcal O(M(H^\infty);A)$ and so $f$ belongs to the ideal $I\subset \mathcal O(M(H^\infty);A)$ generated by $f_1,\dots, f_m$.

To complete the proof it remains to solve equations $\bar\partial b_{is}=h_i\bar\partial h_s$ on $M(H^\infty)$. To this end note that $A$-valued $(0,1)$ forms $h_i\bar\partial h_s$ are $C^\infty$ on $E(S,\beta G)$ and have compact supports in $M_a$. Then according to Theorem \ref{dbar} the required solutions $b_{is}$ of the above equations exist (they are of class $C^\infty$ on $M_a$ and holomorphic in an open neighbourhood of $M_s$).

The proof of the theorem is complete.
\end{proof}
\begin{proof}[Proof of Theorem \ref{th3}]
Let $M(A)$ be the maximal ideal space of a commutative unital complex Banach algebra $A$. We will work with $\mathcal O(M(H^\infty);A)$ instead of $H_{\rm comp}^\infty(A)$, see Proposition \ref{sheaf}. 

For $f\in \mathcal O(M(H^\infty);A)$ we define 
\[
\hat f(z;\xi):=\xi (f(z)),\quad z\in M(H^\infty),\ \xi\in M(A).
\]
Since
\[
\sup_{(z,\xi)\in M(H^\infty)\times M(A)}|\hat f(z;\xi)|\le \max_{z\in M(H^\infty)}\|f(z)\|_A,
\]
$\, \hat\, : \mathcal O(M(H^\infty);A) \to \ell_\infty(M(H^\infty)\times M(A))$ is a nonincreasing-norm morphism of algebras.
Let us show that each $\hat f\in C(M(H^\infty)\times M(A))$. 
Indeed, if a net $\{z_\alpha\}\subset M(H^\infty)$ converges to $z$, then $\lim_\alpha \|f(z_\alpha)- f(z)\|_A=0$ by continuity of $f$.
Hence, if a net $\{(z_\alpha,\xi_\alpha)\}\subset M(H^\infty)\times M(A)$ converges to $(z,\xi)$, then 
$$
\begin{array}{l}
\displaystyle
\limsup_\alpha |\hat f(z_\alpha ;\xi_\alpha)-\hat f(z;\xi)|\leq\limsup_\alpha |\hat f(z;\xi_\alpha)-\hat f(z_\alpha ;\xi_\alpha)|\\
\displaystyle +\limsup_\alpha |\hat f(z ;\xi_\alpha)- \hat f(z;\xi)|\le \limsup_\alpha \|f(z_\alpha)- f(z)\|_A+0=0.
\end{array}
$$
(The second term equals zero because $\{\xi_\alpha\}$ converges to $\xi$.)

So the image of $\, \hat\, $ is a subalgebra of $C(M(H^\infty)\times M(A))$. Therefore the operator adjoint to $\, \hat\, $ determines a continuous map 
\[
\iota : M(H^\infty)\times M(A)\to M(\mathcal O(M(H^\infty);A)).
\]

Next, show that $\iota$ is injective. Indeed,
if $\iota((z_1,\xi_1))=\iota((z_2,\xi_2))$, then $\xi_1 (f(z_1))=\xi_2 (f(z_2))$ for all $f\in \mathcal O(M(H^\infty);A)$. Choosing here
$f$ a constant function (equals an element of $A$) we obtain that $\xi_1(a)=\xi_2(a)$ for all $a\in A$. So, $\xi_1=\xi_2=:\xi$. Now, if $z_1\ne z_2$, then for any $a\in A$ there exists a function in $\mathcal O(M(H^\infty);A)$ such that $f(z_1)=a$ and $f(z_2)=0$. This implies that $\xi(a)=\xi(0)=0$ for all $a\in A$ and contradicts nontriviality of $\xi$. Hence, $z_1=z_2$ as well and so $\iota$ is an embedding.

To show that $\iota(M(H^\infty)\times M(A))= M(\mathcal O(M(H^\infty);A))$ it suffices to prove the following corona theorem (for similar arguments see, e.g., \cite[Ch.~V, Th.~1.8]{Ga}):

Suppose that $f_1,\dots, f_m\in \mathcal O(M(H^\infty);A)$ and
\begin{equation}\label{eq5.4}
\max_{1\le j\le m}|\hat f_j(z;x)|>0,\quad (z,x)\in M(H^\infty)\times M(A).
\end{equation}
Then there exist $g_1,\dots, g_m\in \mathcal O(M(H^\infty);A)$ such that
\begin{equation}\label{eq5.5}
f_1 g_1+\cdots + f_m g_m=1.
\end{equation}

Condition \eqref{eq5.4} implies, in particular, that for a fixed $z\in M(H^\infty)$ the elements $f_1(z),\dots, f_m(z)\in A$ do not belong to a maximal ideal of $A$. Therefore the ideal generated by these elements contains $1$, that is, there exist $g_1,\dots g_m\in A$ such that
\[
\sum_{j=1}^m f_j(z)g_j=1.
\]
From here and continuity of functions $f_j$ on the compact set $M(H^\infty)$ we obtain that there exists an open neighbourhood $U\subset M(H^\infty)$ of $z$ such that 
\[
\|1-h(w)\|_A\le\frac 12\quad\text{for all}\quad w\in U,
\]
where $h(w):=\sum_{j=1}^m f_j(w)g_j$,\ $w\in U$. 

This inequality implies that
\[
h^{-1}=(1-(1-h))^{-1}=\sum_{i=0}^\infty (1-h)^i ,
\]
where the series on the right converges uniformly on $U$. Thus $h, h^{-1}\in \mathcal O(U;A)$ and we have
\[
\sum_{j=1}^m f_j\cdot (g_jh^{-1})=1\quad\text{on}\quad U;
\]
here all $g_j h^{-1}\in \mathcal O(U;A)$. 

Hence, $1$ belongs to the ideal of $\mathcal O(U;A)$ generated by $f_1|_{U}\dots, f_m|_{U}$.

Taking an open cover of $M(H^\infty)$ by such sets $U$ and applying Theorem \ref{th2} we obtain that the function $1$ belongs to the ideal of $\mathcal O(M(H^\infty);A)$ generated by $f_1,\dots, f_m$. 

This shows that $\iota: M(H^\infty)\times M(A)\to M(\mathcal O(M(H^\infty);A))$ is a homeomorphism and completes the proof of the theorem. 
\end{proof}
\begin{proof}[Proof of Theorem \ref{square}]
Gluing together points not separated by $A$, without loss of generality we may assume that $A$ separates points of $X$. Then, since $A$ is self-adjoint, it coincides with $C(X)$ by the Stone-Weierstrass theorem.

Assume that $R_{U,H}\Subset R_{U',H}\Subset M_a$ are coordinate charts. First, we prove that 
\begin{equation}\label{Bezout}
f=\sum_{j=1}^m h_j f_j\quad\text{on}\quad R_{U,H} 
\end{equation}
for some $h_j\in \mathcal O(R_{U,H};A)$, $1\le j\le m$. 

Without loss of generality we will identify $R_{U,H}$ with $\Di\times H$ and $R_{U',H}$ with $\Di_r\times H$ for some $r>1$. 

We set $Z:=\{(z,\xi,x)\in \Di_r\times H\times X\, ;\, f_1(z,\xi,x)=\cdots=f_m(z,\xi,x)=0\}$ and define 
$$
\varphi_j:=\left\{
\begin{array}{ccc}
\displaystyle
\frac{\bar f_j}{\sum_{k=1}^m |f_k|^2}&\text{on}&(\Di_r\times H\times X)\setminus Z\\
\\
0&\text{on}&Z.
\end{array}
\right.
$$
Condition (A) of the theorem implies that each $\varphi_j$ is continuous on $\Di_r\times H\times X$. Indeed, $\varphi_j$ is continuous on $(\Di_r\times H\times X)\setminus Z$ and if a net $\{z_\alpha\}\subset (\Di_r\times H\times X)\setminus Z$ converges to a point $z\in Z$, then condition (A) guarantees that  $\lim_\alpha \varphi_j(z_\alpha)=0$.

Next, we define functions $G_{jk}$ on $\Di_r\times H\times X$ by the formulas:
$$
G_{jk}:=\left\{
\begin{array}{ccc}
\displaystyle
f\varphi_j\frac{\partial\varphi_k}{\partial\bar z}&\text{on}&(\Di_r\times Y_2)\setminus Z\\
\\
0&\text{on}&(\Di_r\times Y_1)\cup ((\Di_r\times Y_2)\cap Z),
\end{array}
\right.
$$
where $Y_1:=\{(\xi,x)\in H\times X\, ;\, f_1(\cdot,\xi,x)=\cdots=f_m(\cdot,\xi,x)=0\}$, $Y_2:=(H\times X)\setminus Y_1$.

From condition (A) and Cauchy inequalities for derivatives of bounded holomorphic functions on $\Di_r$ we obtain on each
$\Di_s\times Y_2$, $1\le s<r$,
\[
|G_{jk}|=\left|\frac{f\cdot\bar f_j\bigl(\sum_{\ell=1}^m f_\ell(\bar f_\ell '\bar f_k-\bar f_\ell \bar f_k ') \bigr)}{\bigl(\sum_{\ell=1}^m |f_k|^2\bigr)^3}\right|\le \frac{c_1|f|}{\bigl(\sum_{\ell=1}^m |f_k|^2\bigr)^{3/2}}\le c_2\cdot \tilde\omega\left(\max_{1\le \ell\le m} |f_k|^2\right),
\]
for $\tilde\omega(t):=\frac{\omega(t)}{t^3}$ and some constants $c_1, c_2$ depending on $\max_{1\le \ell\le m}\{\sup_{M(H^\infty)\times X}|f_j|\}$, $m$, $\frac{1}{r-s}$ and
$c$ from condition (A).

Since $\lim_{t\to 0^+}\tilde\omega(t)=0$, this inequality and the arguments similar to those used for $\varphi_j$ show that each $G_{jk}$ is continuous on $\Di_r\times H\times X$.

Further, to obtain a holomorphic solution of Bezout equation \eqref{Bezout} we must solve equations (cf. \cite[Ch.~VIII, Th.~2.3]{Ga}) 
\begin{equation}\label{eq5.6}
\frac{\partial b_{jk}(z,\xi,x)}{\partial\bar z}=G_{jk}(z,\xi,x)\quad \text{on}\quad \Di\times H\times X.
\end{equation}
This can be done by the standard formula
\begin{equation}\label{poin}
b_{jk}(z,\xi,x):=\frac{1}{2\pi i}\int\int_{\Co}\frac{\rho(w)G_{jk}(w,\xi,x)}{w-z}dw\wedge d\bar w,\quad (z,\xi,x)\in \bar\Di\times H\times X , 
\end{equation}
where $\rho$ is a real $C^\infty$ function equals $1$ on $\Di$ and $0$ on $\Co\setminus\Di_r$.
 
Since each $G_{jk}$ is continuous on $\Di_r\times H\times X$, \eqref{poin} (rewritten in polar coordinates $w:=z+re^{i\theta}$) implies that each $b_{jk}$ is continuous on $\bar\Di\times H\times X$. Also,
observe that $G_{jk}(\cdot,\xi, x)$ may be either $0$ (for $(\xi,x)\in Y_1$) or a complex analytic function (for $(\xi,x)\in Y_2$, cf. \cite{Ga}).  Thus $b_{jk}(\cdot,\xi, x)$ is of class $C^\infty$ for each $(\xi,x)\in H\times X$. In particular, as in \cite{Ga}, for each $(\xi,x)\in H\times X$ functions
\[
g_j(\cdot,\xi, x)= f(\cdot,\xi, x)\psi_j(\cdot, \xi, x)+\sum_{k=1}^m (b_{jk}(\cdot,\xi,x)- b_{kj}(\cdot,\xi,x)) f_k(\cdot,\xi, x)
\]
belong to $\mathcal O(\Di)\cap C(\bar\Di)$ and satisfy $\sum_{j=1}^m g_jf_j= f$ on $\Di\times H\times X$. Moreover, all $g_j$ are continuous on $\bar\Di\times H\times X$.  So they can be regarded as holomorphic functions on $\Di\times H$  continuous on $\bar\Di\times H$ (recall that $H$ is clopen) with values in $C(X):=A$.

Hence, we have proved that for each $R_{U,H}\Subset M_a$ the restriction
$f|_{R_{U,H}\times X}$ belongs to the ideal of $\mathcal O(R_{U,H};A)$ generated by $f_1|_{R_{U,H}\times X},\dots, f_m|_{R_{U,H}\times X}$. 

Finally, due to condition (B), for each $w\in M_s$ function $f(w,\cdot)\in C(X)$ belongs to the ideal generated by $f_1(w,\cdot),\dots, f_m(w,\cdot)$, cf. arguments after \eqref{eq5.5}. As in the proof of Theorem \ref{th3} we deduce from here that for each $w\in M_s$ there exists its open neighbourhood $U_w\subset M(H^\infty)$ such that $f|_{U_w\times X}$ belongs to the ideal of $\mathcal O(U_w;A)$ generated by 
$f_1|_{U_w\times X},\dots, f_m|_{U_w\times X}$. 

Taking a finite open cover of $M(H^\infty)$ by sets $U_w$, $w\in M_s$, and $R_{U,H}\subset M_a$ and applying
Theorem \ref{th2} we conclude that $f$ belongs to the ideal of $S(A)$ generated by $f_1,\dots, f_m$.

The proof of the theorem is complete.
\end{proof}

\begin{proof}[Proof of Corollary \ref{cor1}]
By definition \[
S_N(H^\infty)=S(S_{N-1}(H^\infty))\, (:=S(H^\infty; S_{N-1}(H^\infty))).
\]
Hence, from Theorem \ref{th3} we obtain 
\[
M(S_N(H^\infty))=M(H^\infty)\times M(S_{N-1}(H^\infty))=\cdots=M(H^\infty)^N. 
\]
\end{proof}

\section{Proof of Theorems \ref{th4} and \ref{th5}}
\begin{proof}[Proof of Theorem \ref{th4}]
By Theorem \ref{th3} the maximal ideal space of $H_{\rm comp}^\infty(A)$ can be naturally identified with $M(H^\infty)\times M(A)$.

Next, conditions of the theorem imply that the image $\hat F=(\hat f_{ij})$ of the matrix $F$ under the Gelfand transform $\, \hat\, $ satisfies 
\[
\sum_{i=1}^\ell |\hat h_i(x,y))|\ge\delta\quad\text{for all}\quad (x,y)\in M(H^\infty)\times M(A),
\]
where $\hat h_i,\dots, \hat h_{\ell}$ is the family of minors of order $k$ of $\hat F$. This inequality allows to apply \cite[Th.~3]{Lin} asserting that in order to prove the result it suffices to extend the matrix $\hat F$ up to an invertible one in the category of continuous matrix functions on $M(H^\infty)\times M(A)$, i.e. to find an $n\times n$
matrix $G = (g_{ij}),\ g_{ij}\in C(M(H^\infty)\times M(A))$, so that $g_{ij} = \hat f_{ij}$ for $1\le j\le k$, $1\le i\le n$, and
${\rm det}\,G = 1$. 

Note that the matrix $\hat F$ determines a trivial subbundle $\xi$ of complex rank $k$ in
the trivial vector bundle $\theta^n:=\bigl(M(H^\infty)\times M(A)\bigr)\times\Co^n$ on $M(H^\infty)\times M(A)$. Let $\eta$ be an additional to $\xi$ subbundle of $\theta^n$, i.e., $\xi\oplus\eta=\theta^n$. We show that $\eta$ is topologically trivial. Then a trivialization of $\eta$ given by global continuous sections $s_1,\dots, s_{n-k}$ determines the required extension $G$ of $\hat F$.

In what follows $\theta^\ell$ stands for the trivial rank $\ell$ complex vector bundle on a compact topological space.
\begin{Lm}\label{complet}
Let $\vartheta$ be a rank $n-k$ complex vector bundle on a compact topological space $Y$ satisfying $\theta^k\oplus\vartheta=\theta^n$. Assume that
$n-k\ge \lfloor\frac{s}{2}\rfloor$, where $s\ge {\rm dim}\, Y$. Then $\vartheta\cong\theta^{n-k}$.
\end{Lm}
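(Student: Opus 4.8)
The plan is to recognize Lemma \ref{complet} as an instance of the stable range for complex vector bundles and to prove it by obstruction theory. A rank $n-k$ complex vector bundle on the compact space $Y$ is classified by a homotopy class of maps $\phi\colon Y\to BU(n-k)$, and the hypothesis $\theta^{k}\oplus\vartheta\cong\theta^{n}$ says precisely that the composite of $\phi$ with the stabilization $BU(n-k)\to BU(n-k+1)\to\cdots\to BU(n)$ (adjoining a trivial rank $k$ summand) is nullhomotopic. So it is enough to show that, in the dimension range at hand, the induced map $[Y,BU(n-k)]\to[Y,BU(n)]$ is injective on homotopy classes: then $[\phi]$ is the class of a constant map, i.e.\ $\vartheta\cong\theta^{n-k}$.

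The one homotopy-theoretic ingredient I would use is that for each $j$ the stabilization $BU(j)\to BU(j+1)$ is, up to homotopy, a fibration with fibre $U(j+1)/U(j)=S^{2j+1}$, which is $2j$-connected. Standard obstruction theory then shows that $[Z,BU(j)]\to[Z,BU(j+1)]$ is bijective whenever $\dim Z\le 2j$: surjectivity, since the successive obstructions to lifting a given map $Z\to BU(j+1)$ through this fibration lie in $H^{i+1}\!\bigl(Z;\pi_i(S^{2j+1})\bigr)$, which vanish for $i\le 2j$; injectivity, by the analogous relative argument for lifting a homotopy $Z\times I\to BU(j+1)$ rel $Z\times\partial I$, whose obstructions lie in $H^{i}\!\bigl(Z;\pi_i(S^{2j+1})\bigr)$. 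Under the stated inequality every $j\ge n-k$ satisfies $2j\ge\dim Y$ (see below for this bookkeeping), so each of the finitely many maps $[Y,BU(j)]\to[Y,BU(j+1)]$, $n-k\le j\le n-1$, is bijective, and their composite $[Y,BU(n-k)]\to[Y,BU(n)]$ is injective, as required. In $K$-theoretic language this is the same thing: $\vartheta$ is stably trivial, so its class in $\widetilde K^{0}(Y)$ is zero, and a complex bundle whose rank is at least half the dimension of the base is recovered from its class in $\widetilde K^{0}(Y)$; hence $\vartheta\cong\theta^{n-k}$.

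Two matters call for care. First, the obstruction theory above is usually formulated over CW complexes, while $Y$ is merely compact; so I would first reduce to that setting, most naturally by the kind of inverse-limit argument already present in the paper (cf.\ the Marde\v{s}i\'c reduction invoked in Remark \ref{rem1}): write $Y$ as an inverse limit of finite CW complexes of covering dimension $\le s$ and pass the functors $[\,\cdot\,,BU(j)]$ and $\widetilde K^{0}(\,\cdot\,)$ to the limit, or else carry out the obstruction calculus directly with \v{C}ech cohomology, which is legitimate for a compact space of finite covering dimension. The second, and genuinely the main, point is the dimension bookkeeping: one must verify that under the precise hypothesis $n-k\ge\lfloor s/2\rfloor$ with $s\ge\dim Y$ all the obstruction groups entering the argument really do vanish, i.e.\ that the argument stays strictly inside the stable range and is not off by one; this is where the inequality is used in full strength, and where it is essential to take $s$ as small as possible and to keep track of exactly which $BU(j)$ each map lives on. Granting that, the conclusion $\vartheta\cong\theta^{n-k}$ follows.
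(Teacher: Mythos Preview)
Your approach is essentially the same as the paper's: reduce from a general compact $Y$ to finite polyhedra of dimension $\le s$ via inverse limits, then invoke the stable-range theorem for complex vector bundles. The paper carries out the reduction in two explicit steps (Marde\v{s}i\'c's theorem to pass to metrizable compacta of dimension $\le s$, then Freudenthal's theorem to pass to compact polyhedra), and along the way uses \cite[Lm.~1]{Lin} to guarantee that the condition $\theta^k\oplus\vartheta_\alpha=\theta^n$ survives to the approximating space $Y_{\alpha_0}$---a point you should make explicit, since pulling back $\vartheta$ alone does not automatically yield this. At the polyhedral stage the paper simply cites \cite[Ch.~9, Th.~1.5]{Hus} rather than unpacking the obstruction theory for the fibrations $S^{2j+1}\to BU(j)\to BU(j+1)$ as you do; your sketch of that argument is correct and is exactly what lies behind Husemoller's theorem, so the two proofs coincide in substance.
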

\begin{proof}
First, we reduce the statement of the lemma to the case of bundles on a metrizable space. Indeed, according to \cite{M} one presents $Y$ as the inverse limit of a limiting system $\{Y_\alpha\, ;\, \pi\}_{\alpha\in \Lambda}$, where $Y_\alpha$ are metrizable compact spaces of dimension $\le s$. Then by a well-known theorem about
continuous maps of inverse limits of compact spaces (see, e.g.,
\cite{EilSte}) and the fact that all complex vector bundles of rank
$n$ on $Y$ can be obtained as pullbacks of the universal bundle
$EU(n)$ on the classifying space $BU(n)$ of the unitary group
$U(n)\subset GL_n(\Co)$ under some continuous maps $Y\to BU(n)$
(see, e.g., \cite{Hus}), for the bundle $\vartheta$ there is $\alpha_0\in\Lambda$ and a complex vector
bundle $\vartheta_{\alpha_0}$ on $Y_{\alpha_0}$ such that the pullback
$\pi_{\alpha_0}^*\vartheta_{\alpha_0}$ is isomorphic to $\vartheta$.  Moreover, since $\theta^k\oplus\vartheta=\theta^n$, increasing $\alpha_0$, if necessary, we may assume without loss of generality that $\theta^k\oplus\vartheta_{\alpha_0}=\theta^n$ on $Y_{\alpha_0}$ (this follows, e.g., from \cite[Lm.~1]{Lin}.) If we show that under the conditions on $d, n, k$ the bundle $\vartheta_{\alpha_0}\cong\theta^{n-k}$, then the same will be true for the bundle $\vartheta$. 

Thus without loss of generality we may assume that $Y$ is metrizable. Further, using the classical Freudenthal theorem we can present $Y$ as the inverse limit of a sequence of compact polyhedra of dimension $\le s$. Applying arguments as above we may assume without loss of generality that $Y$ is a compact polyhedron of dimension $\le s$. But then under the conditions of the lemma the required statement (i.e., $\vartheta\cong\theta^{n-k}$) follows directly from \cite[Ch.~9, Th.~1.5]{Hus}.
\end{proof}
To apply the lemma observe that according to the hypothesis of the theorem $M(A)$ is the inverse limit of a limiting system $\{M_\alpha\, ;\,\pi\}_{\alpha\in\Lambda}$, where each $M_\alpha$ is homotopically equivalent to a metrizable compact space $X_\alpha$ with ${\rm dim}\, X_\alpha\le d$. Therefore
$M(H_{\rm comp}^\infty(A))=M(H^\infty)\times M(A)$ is the inverse limit of the system $\{M(H^\infty)\times M_\alpha\, ;\,{\rm id}\times\pi\}_{\alpha\in\Lambda}$. Thus as in the proof of Lemma \ref{complet} we obtain that in order to prove that $\eta\cong\theta^{n-k}$ it suffices to prove a similar statement for bundles $\eta_\alpha$ on $M(H^\infty)\times M_\alpha$ satisfying $\eta_\alpha\oplus\theta^k=\theta^n$.
Further, $M(H^\infty)\times M_{\alpha}$ is homotopically equivalent to $M(H^\infty)\times X_\alpha$ and therefore each
$\eta_{\alpha}$ is isomorphic to the pullback (under the map establishing the homotopy equivalence) of some bundle $\tilde\eta_{\alpha}$ on $M(H^\infty)\times X_\alpha$ satisfying $\theta^k\oplus \tilde\eta_{\alpha}=\theta^n$. But due to \cite{S1} ${\rm dim}\, M(H^\infty)=2$; hence,
${\rm dim}\, M(H^\infty)\times X_\alpha\le d+2$. Applying Lemma \ref{complet} together with conditions of the theorem to $\tilde\eta_\alpha$ we obtain its triviality. This implies triviality of $\eta_\alpha$ and then of $\eta$.

The proof of the theorem is complete.
\end{proof}

\begin{proof}[Proof of Theorem \ref{th5}]
According to assumptions of the theorem $M(A)$ is the inverse limit of a limiting system $\{X_\alpha\, ;\,\pi\}_{\alpha\in\Lambda}$, where each $X_\alpha$ is a metrizable contractible compact space. Therefore
$M(H_{\rm comp}^\infty(A))=M(H^\infty)\times M(A)$ is the inverse limit of the system $\{M(H^\infty)\times X_\alpha\, ;\,{\rm id}\times\pi\}_{\alpha\in\Lambda}$. Since ${\rm dim}\, M(H^\infty)=2$ and $H^2(M(H^\infty);\Z)=0$ (see \cite{S1}), any finite rank complex vector bundle on $M(H^\infty)$ is topologically trivial. Thus, since $M(H^\infty)\times X_\alpha$ is homotopically equivalent to $M(H^\infty)$, the same is valid for finite rank complex vector bundles on that space. From here as in the proof of Theorem \ref{th3} we obtain that any finite rank complex vector bundle on $M(H^\infty)\times M(A)$ is topologically trivial. Now the desired result follows from \cite[Th.~1.3]{BS}
\end{proof}

\sect{Proof of Theorem \ref{approx}}
\subsection{} 
Assume that $R_{U,H}\Subset R_{U',H}\Subset M_a$ are coordinate charts. We will use the following auxiliary result.
\begin{Lm}\label{Ltay}
For any function $f\in\mathcal O(R_{U',H};B)$ its restriction
$f|_{R_{U,H}}$ belongs to $H^\infty(R_{U,H})\otimes B$. 
\end{Lm}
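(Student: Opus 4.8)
The plan is to combine a Taylor expansion in the disc variable with the approximation property of $C(H)$. Following the reduction used in the proof of Theorem~\ref{square}, I would use the trivialization $\varphi$ of Section~2.2 over the simply connected set $U'\subset S$ to identify $R_{U,H}$ with $\Di\times H$ and $R_{U',H}$ with $\Di_r\times H$ for some $r>1$, the coordinates being $(z,\xi)$ with $\xi$ ranging over the clopen set $H\subset\beta G$. Under this identification a function $f\in\mathcal O(R_{U',H};B)$ becomes a $B$-valued map $f(z,\xi)$ that is jointly continuous on $\Di_r\times H$ and holomorphic in $z$ (by the Cauchy-estimate observation preceding Theorem~\ref{dbar}); in particular $f$ is bounded, with relatively compact image, on the compact set $\overline{\Di_\rho}\times H$ for every $\rho<r$.

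Next I would expand $f$ in powers of $z$. Fixing $\rho$ with $1<\rho<r$, put $c_n(\xi):=\frac{1}{2\pi i}\int_{|z|=\rho}\frac{f(z,\xi)}{z^{n+1}}\,dz$; joint continuity of $f$ gives $c_n\in C(H;B)$, and the Cauchy estimates give $\|c_n\|_{C(H;B)}\le M\rho^{-n}$ with $M:=\sup_{|z|=\rho,\,\xi\in H}\|f(z,\xi)\|_B$. Since $|z|<1<\rho$ on $R_{U,H}\cong\Di\times H$, the series $\sum_{n\ge 0}c_n(\xi)z^n$ converges uniformly on $R_{U,H}$ to $f|_{R_{U,H}}$. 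Now I invoke Theorem~\ref{th1}: as $H$ is totally disconnected, $C(H)$ is the closed linear span of characteristic functions of its clopen subsets and hence has the approximation property, so $C(H)\otimes B=C(H)_B=C(H;B)$; therefore each $c_n$ is a uniform limit of finite sums $\sum_k a_{n,k}\otimes b_{n,k}$ with $a_{n,k}\in C(H)$ and $b_{n,k}\in B$. For fixed $n$, the function $(z,\xi)\mapsto a_{n,k}(\xi)z^n$ on $R_{U,H}$ is bounded, jointly continuous, and holomorphic in $z$, hence lies in $H^\infty(R_{U,H})$; thus $\sum_k(a_{n,k}z^n)\otimes b_{n,k}\in H^\infty(R_{U,H})\otimes B$.

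Finally I would truncate the Taylor series at a large $N$ — the geometric tail estimate $\sum_{n>N}M\rho^{-n}\to 0$ makes the remainder small in sup-norm on $R_{U,H}$ — and approximate each $c_n$, $0\le n\le N$, sufficiently well in $C(H;B)$; this produces elements of $H^\infty(R_{U,H})\otimes B$ that converge uniformly on $R_{U,H}$ to $f|_{R_{U,H}}$. Since $H^\infty(R_{U,H})\otimes B$ is by definition complete for the norm \eqref{eq1}, it follows that $f|_{R_{U,H}}\in H^\infty(R_{U,H})\otimes B$. No single step is genuinely hard; the points needing care are the bookkeeping showing that the uniform-in-$\xi$ control coming from the approximation property of $C(H)$ combines with the geometric-in-$z$ decay of the Taylor coefficients to give convergence in the tensor norm \eqref{eq1}, and the verification that a monomial in $z$ times a clopen characteristic function in $\xi$ genuinely represents an element of $H^\infty(R_{U,H})$ rather than merely of $C(R_{U,H})$.
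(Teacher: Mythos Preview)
Your proposal is correct and follows essentially the same route as the paper: identify $R_{U,H}\cong\Di\times H$, $R_{U',H}\cong\Di_r\times H$, expand $f$ as a Taylor series $\sum_n a_n(\xi)z^n$ in the disc variable with coefficients $a_n\in C(H;B)$ converging uniformly on $\Di\times H$, and then invoke Grothendieck's Theorem~\ref{th1} together with the approximation property of $C(H)$ to get $C(H;B)=C(H)\otimes B$, from which $f|_{R_{U,H}}\in H^\infty(R_{U,H})\otimes B$ follows. The only cosmetic difference is that the paper appeals to the approximation property of $C(H)$ via the general fact that $C(K)$ has it for any compact Hausdorff $K$, whereas you use the totally disconnected structure of $H$; both are fine.
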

\begin{proof}
Without loss of generality we will identify $R_{U,H}$ with $\Di\times H$ and $R_{U',H}$ with $\Di_r\times H$ for some $r>1$. Since, $H$ is clopen and $f$ is $B$-valued continuous on the compact set $\bar\Di_s\times H$ for some $1<s<r$, we can regard $\tilde f:=f|_{\Di_s\times H}$ as a function from $H_{\rm comp}^\infty(\Di_s;C(H;B))\cap C(\bar\Di_s;C(H;B))$. Applying to $\tilde f$ the Cauchy integral formula and then decomposing the Cauchy kernel we obtain 
\begin{equation}\label{tay1}
f(z,\xi)=\sum_{j=0}^\infty a_j(\xi)z^j,\qquad (z,\xi)\in \Di\times H,
\end{equation}
where all $a_j\in C(H;B)$ and the series converges uniformly to $f$, i.e., 
\begin{equation}\label{tay2}
\lim_{N\to\infty}\sup_{(z,\xi)\in\Di\times H}\left\|f(z,\xi)-\sum_{j=0}^N a_j(\xi)z^j\right\|_B=0.
\end{equation}

Further, since $H$ is a compact Hausdorff space, $C(H)$ has the approximation property; in particular, $C(H,B)=C(H)\otimes B$, see Theorem \ref{th1}.
From here and \eqref{tay1}, \eqref{tay2} we obtain that $f|_{\Di\times H}\in H^\infty(\Di\times H)\otimes B$.
\end{proof}

\subsection{} Now we prove Theorem \ref{approx}.
\begin{proof}

Let us prove the first statement of the theorem. Let $U\Subset V\subset M_a$ be open subsets. Choose open $W\Subset V$ containing $\bar U$ and
coordinate charts $R_{U_j,H_j}$, $R_{U_j',H_j}$, $1\le j\le m$, such that $R_{U_j,H_j}\Subset R_{U_j',H_j}\Subset V$ for all $j$, and
$\mathcal R:=(R_{U_j,H_j})_{1\le j\le m}$ is an open cover of $\bar W$. Let $f\in\mathcal O(V;B)$. According to Lemma \ref{Ltay} for any $n\in\N$ and $1\le j\le m$ there are functions $f_{j,n}\in H^\infty(R_{U_j,H_j})\otimes B$ of the form
\[
f_{j,n}:=\sum_{s=1}^{n_j}b_{js,n}f_{js,n},\quad\text{where all}\quad b_{js, n}\in B,\ f_{js, n}\in H^\infty(R_{U_j,H_j}),
\]
such that
\[
\sup_{R_{U_j,H_j}}\|f-f_{j,n}\|_B\le\frac{1}{2n}.
\]

By $B_n\subset B$ we denote the finite-dimensional vector subspace generated by all $b_{js,n}$, for all possible $j$ and $s$. 
Consider a cocycle $\{c_{ij,n}\}\in Z^1(\mathcal R ;B_n)$ defined by the formulas
\[
c_{ij,n}:=f_{i,n}-f_{j,n}\quad\text{on}\quad R_{U_i,H_i}\cap R_{U_j,H_j}\ne\emptyset.
\]
Then for all $i,j$
\[
\sup_{R_{U_i,H_i}\cap R_{U_j,H_j}}\|c_{ij,n}\|_B\le\frac{1}{n}.
\]

Next, we introduce the Banach space $\tilde B$ of sequences $v=(v_1,v_2,\dots)$ such that $v_n\in B_n$, $n\in\N$, with norm 
\[
\|v\|_{\tilde B}:=\left(\sum_{\ell=1}^\infty \|v_\ell\|_B^2\right)^{1/2}.
\]

Clearly, $c_{ij}:=(c_{ij,1}, c_{ij,2},\dots)$ is a holomorphic function on $R_{U_i,H_i}\cap R_{U_j,H_j}$ with values in $\tilde B$. Thus
$c=\{c_{ij}\}$ is a holomorphic $1$-cocycle on the cover $\mathcal R$ with values in $\tilde B$.

Let $U'$ be open containing $\bar U$ and such that $\bar U'$ belongs to the union of all $R_{U_j,H_j}$.
We set 
$$
U_*:=M(H^\infty)\setminus\bar U'.
$$
Then $U_*$ together with all  $R_{U_j,H_j}$ form a finite open cover of $M(H^\infty)$.
As in subsection~4.2 consider a refinement of this cover $(W_j)_{1\le j\le k}$ and subordinate to the cover $(W_j')_{1\le j\le k}$, $W_j':=q^{-1}(W_j)$, of $E(S,\beta G)$ a $C^\infty$ partition of unity $\{\varphi_j\}_{1\le j\le k}$. Without loss of generality we may assume that each $W_r':=W_r$ with $1\le r\le s$ belongs to one of
$R_{U_j,H_j}$ and others $W_j$ are subsets of $U_*$ only. 

Observe that ${\mathcal W}=(W_j)_{1\le j\le r}$ is a cover of $\bar U$.
(For otherwise, there exists $W_j\subset U_*$ such that $W_j\cap\bar U\ne\emptyset$, a contradiction.) By $\tilde c=(\tilde c_{ij})\in Z^1({\mathcal W};\tilde B)$ we denote the restriction of the cocycle $c$ to the cover $\mathcal W$. Next, for $1\le i\le r$ we define
\[
h_i=\sum_\ell\varphi_\ell\tilde c_{i\ell}\quad\text{on}\quad W_i;
\]
here the sum is taken over all $\ell$ for which $W_\ell\cap W_i\ne\emptyset$.

Suppose that $U''$ is an open set containing $\bar U$ and such that $\bar U''\Subset U'$. Then $\{\varphi_j|_{U''}\}$ is a $C^\infty$ partition of unity subordinate to the cover $(W_j\cap U'')_{1\le j\le r}$ of $U''$. In particular, $\{h_i|_{U''}\}$ is a resolution of the cocycle $\tilde c|_{U''}$. Then the formulas 
\[
\omega:=\bar\partial (h_i|_{U''})\quad\text{on}\quad U''\cap W_i
\]
define a $\tilde B$-valued $C^\infty$ $(0,1)$-form on $U''$.

Applying Lemma \ref{rest} we find a $\tilde B$-valued $C^\infty$ function $g$ on $U''$ such that $\bar\partial g=\omega$ on $U$.

Next, for each $W_i\cap U\ne\emptyset$ we set
\[
c_i=h_i|_U-g|_{W_i\cap U}.
\]
Then $c_i\in H_{\rm comp}^\infty(W_i\cap U; \tilde B)$ and 
\[
c_i-c_j=\tilde c_{ij}|_{U}\quad\text{on}\quad W_i\cap W_j\cap U\ne\emptyset.
\]
By definition, $c_i=(c_{i,1}, c_{i,2},\dots)$, where $c_{i,n}\in H^\infty(W_i\cap U)\otimes B_n$ and 
\[
\lim_{n\to\infty}\sup_{W_i\cap U}\|c_{i,n}\|_B=0.
\]
Finally, we define functions $f_n$ on $U$ by the formulas
\[
f_n=f_{i,n}-c_{i,n}\quad\text{on}\quad W_i\cap U\ne\emptyset.
\]
(Since $(f_{i,n}-c_{i,n})-(f_{j,n}-c_{j,n})=\tilde c_{ij}|_{U}-\tilde c_{ij}|_{U}=0$ on $W_i\cap W_j\cap U\ne\emptyset$, each $f_n$ is well-defined.) According to our construction $f_n\in H^\infty(U)\otimes B_n$ and 
\[
\lim_{n\to\infty}\sup_{W_i\cap U}\|f-f_{n}\|_B=0.
\]
Hence, $f\in H^\infty(U)\otimes B$.

This completes the proof of the first statement of the theorem.

\medskip

The proof of the second statement is similar, so we will briefly describe it.

According to the first part of the theorem and its assumptions, for a function $f\in\mathcal O(M(H^\infty);B)$ there exists an open cover $(U_j)_{1\le j\le m}$ of $M(H^\infty)$ such that $f|_{U_i}\in H^\infty(U_i)\otimes B$ for all $i$. Then we repeat word-for-word the proof of the first part of the theorem with $R_{U_j,H_j}$ replaced by $U_j$, to construct a holomorphic 1-cocycle $c$ on the cover $(U_j)_{1\le j\le m}$ with values in $\tilde B$. By Theorem \ref{main} passing to a suitable refinement $(W_j)$ of this cover we can resolve the restriction of $c$ to $(W_j)$. Then we conclude as in the proof of the first part of the theorem (with $W_i\cap U$ replaced by $W_i$).

The converse to this statement follows from Theorem \ref{th1}: if $H^\infty$ has the approximation property, then $\mathcal O(M(H^\infty); B)=H^\infty\otimes B$.
\end{proof}

\sect{Further Results}
Let $R$ be a Caratheodory hyperbolic Riemann surface and $A$ be a commutative unital complex Banach algebra. By $H_{\rm comp}^\infty (R;A)$ we denote the Banach algebra of holomorphic functions on $\Di$ with relatively compact images in $A$ equipped with norm $\|f\|:=\sup_{z\in R}\|f(z)\|_A$, $f\in H_{\rm comp}^\infty (R;A)$. Let $p:\Di\to R$ be the universal covering of $R$. The fundamental group $\pi_1(R)$ acts of $\Di$ by M\"{o}bius transformations. Assume that $R$ satisfies:
\begin{itemize}
\item[(P)]
There exists a function $h\in H^\infty$ such that $\sup_{z\in\Di}\left(\sum_{g\in\pi_1(R)}|h(gz)|\right)<\infty$ and $\sum_{g\in\pi_1(R)}h(gz)=1$ for all $z\in\Di$.
\end{itemize}
This $h$ determines a linear continuous map $P_A: H_{\rm comp}^\infty (A)\to H_{\rm comp}^\infty (R;A)$ such that
\[
P_A(f_1\cdot p^*f_2)=P_A(f_1)\cdot f_2\quad\text{for all}\quad  f_1\in  H_{\rm comp}^\infty (A),\ f_2\in H_{\rm comp}^\infty (R;A),
\]
given by the formula
\begin{equation}\label{proj}
P_A(f):=p_*\left(\sum_{g\in\pi_1(R)}h(gz)f(gz)\right),\quad f\in H_{\rm comp}^\infty (A);
\end{equation}
here $p_*: p^*\left(H_{\rm comp}^\infty (R;A)\right)\to  H_{\rm comp}^\infty (R;A)$ is inverse to the pullback map $p^*$.

Existence of such $h$ for $R$ a finite bordered Riemann surface was established in \cite{Fo}, for $R$ a homogeneous Denjoy domain in \cite{C2} (see also \cite{JM} for a more general setting), and for $R$ a subdomain of an unbranched covering $R'$ of a finite bordered Riemann surface such that inclusion $R\hookrightarrow R'$ induces a monomorphism of the fundamental groups in \cite{Br2}.
In all these cases taking the pullback by $p$ of functions from $H_{\rm comp}^\infty (R;A)$, then solving the corresponding problem in $H_{\rm comp}^\infty (A)$ and applying to the solution the map $P_A$ we obtain analogs of Theorems \ref{runge}, \ref{th2}, \ref{th3} on $R$ (with $H^\infty$ replaced by $H^\infty(R)$ and $H_{\rm comp}^\infty (A)$ replaced by $H_{\rm comp}^\infty (R;A)$ in the original versions of these theorems). In fact, similar results are valid on a Riemann surface $R$ of finite type (since it is biholomorphic to a bordered Riemann surface with finitely many points removed). In this case arguing as in \cite{Br2} one obtains that ${\rm dim}\, M(H^\infty(R))=2$. In particular, analogs of Theorems \ref{th4} and \ref{th5} are valid on $R$.


\begin{thebibliography}{}

\bibitem[BG]{BG}
J. Bourgain and O. Reinov, On the approximation properties for the space $H^\infty$.
Mat. Nachr. {\bf 122} (1983), 19--27.

\bibitem[BRS]{BRS}
A.~Brudnyi, L.~Rodman and I.~M.~Spitkovsky, 
Projective free algebras of continuous functions on compact abelian groups. J. Funct. Anal. {\bf 259} (2010), 918--932.

\bibitem[BS]{BS}
A.~Brudnyi and A.~Sasane, Sufficient conditions for the projective freeness of Banach algebras. J. Funct. Anal. {\bf 257} (2009), 4003--4014.

\bibitem[Br1]{Br1}
A.~Brudnyi, Topology of the maximal ideal space of $H^\infty$. J. Funct. Anal. {\bf 189} (2002), 21--52. 

\bibitem[Br2]{Br2}
A.~Brudnyi,
Projections in the space $H^\infty$ and the corona theorem for subdomains of coverings of finite bordered Riemann surfaces. Ark. Mat. {\bf 42} no. 1 (2004), 31--59.

\bibitem[Br3]{Br3}
A.~Brudnyi,
Matrix-valued corona theorem for multiply connected domains, Indiana Univ. Math. J. {\bf 49} (2000), 1405--1410.

\bibitem[C1]{C1}
L.~Carleson, Interpolations by bounded analytic functions and the corona theorem. Ann. of Math. {\bf 76}
(1962), 547--559.

\bibitem[C2]{C2}
L.~Carleson, On $H^\infty$ in multiply connected domains. Conference on harmonic
analysis in honor of Antoni Zygmund, Vol. II, ed. Beckner, W., et al, Wadsworth,
1983, 349--372.

\bibitem[Cu]{Cu}
W.~Cutrer, Some remarks on the slice algebra for $H^\infty$. Rocky Mountain J. Math. {\bf 5} (1975), 255--261.

\bibitem[E]{E}
P. Enflo, A counterexample to the approximation property in Banach spaces.
Acta Math. {\bf 130} (1973), 309--317.

\bibitem[EilSte]{EilSte}
S. Eilenberg and N. Steenrod, Foundations of algebraic topology.
Princeton, New Jersey, 1952.

\bibitem[F]{F}
H. Freudenthal, Entwicklungen von R\"{a}umen und ihren Gruppen. Compositio Math. {\bf 4} (1937), 145--234.

\bibitem[Fo]{Fo}
F. Forelli, Bounded holomorphic functions and projections. Illinois J. Math. {\bf 10}
(1966), 367--380.

\bibitem[G]{G}
A. Grothendieck, Products tensoriels toplogiques et espaces nucl\'eaires. Memoirs Amer.
Math. Society {\bf 16}, 1955.

\bibitem[Ga]{Ga}
J.~B.~Garnett, Bounded analytic functions. Academic Press, New York, 1981.

\bibitem[JM]{JM}
P. Jones and D. Marshall, Critical points of Green's functions, harmonic measure
and the corona theorem. Ark. Mat. {\bf 23} no. 2 (1985), 281--314.

\bibitem[H]{H}
K.~Hoffman, Bounded analytic functions and Gleason parts. Ann. of Math. {\bf 86} (1967), 74--111.

\bibitem[Hi]{Hi}
F.~Hirzebruch, Topological methods in Algebraic Geometry. Springer-Verlag,
New York, 1966.

\bibitem[Hus]{Hus}
D.~Husemoller, Fibre bundles. Third edition.  Springer-Verlag,
New York, 1994.


\bibitem[L]{L}
J. Lindenstrauss, Some open problems in Banach space theory. S\'eminaire Choquet {\bf 18}
(1975), 1--9.

\bibitem[Lin]{Lin}
V. Lin, Holomorphic fibering and multivalued functions of elements of a Banach
algebra. Funct. Anal. and its Appl. English translation, {\bf 7} (2) (1973), 122--128.

\bibitem[M]{M}
S.~Marde\v{s}i\'c, On covering dimension and inverse limits of compact spaces. Illinois J. Math. {\bf 4} (1960), 278-291.

\bibitem[Q]{Q}
A.~Quadrat, The fractional representation approach to synthesis problems: an algebraic
analysis viewpoint. II. Internal stabilization. SIAM Journal on Control and
Optimization. {\bf 42} (2003), 300--320.

\bibitem[R]{R}
K.~V.~R.~Rao, On a generalized corona problem. J. Analyse Math. {\bf 18} (1967), 277--278.

\bibitem[SN]{SN}
B.~Sz.-Nagy, A problem on operator valued bounded analytic functions. Zap. Nauchn. Sem. LOMI {\bf 81} (1978), 99.

\bibitem[S1]{S1}
D.~Su\'{a}rez, \v{C}ech cohomology and covering dimension for the $H^\infty$ maximal ideal space. J. Funct. Anal. {\bf 123} (1994),
233--263.

\bibitem[S2]{S2}
D.~Su\'{a}rez, Trivial Gleason parts and the topological stable rank of $H^\infty$. Amer. J. Math. {\bf 118} (1996), 879--904.


\end{thebibliography}
\end{document}